\title{Orbits of Theta Characteristics}
\author[H. W. Braden]{H. W. Braden \orcidA{}}
\address{
School of Mathematics and Maxwell Institute for Mathematical Sciences\\ The University of Edinburgh\\ 
Edinburgh EH9 3FD, Scotland, U.K.
}
\email{hwb@ed.ac.uk}
\author[Linden Disney-Hogg]{Linden Disney-Hogg \orcidB{}}
\address{
School of Mathematics \\ The University of Leeds \\ 
Leeds LS2 9JT, U.K.
}
\email{a.l.disney-hogg@leeds.ac.uk}
\thanks{\textbf{Acknowledgements.} We are grateful to Vanya Cheltsov for discussions. The research of LDH is supported by a UK Engineering and Physical Sciences Research Council (EPSRC) doctoral prize fellowship.}
\thanks{\textbf{Data Availability.} The datasets generated during the current study and the code for their creation/analysis are available from the second author upon reasonable request.}
\begin{document}

\begin{abstract}
The theta characteristics on a Riemann surface are permuted by the induced action of the automorphism group, with the orbit structure being important for the geometry of the curve and associated manifolds. We describe two new methods for advancing the understanding of these orbits, generalising existing results of Kallel \& Sjerve, allowing us to establish the existence of infinitely many curves possessing a unique invariant characteristic as well as determine the number of invariant characteristics for all Hurwitz curves with simple automorphism group. In addition, we compute orbit decompositions for a substantial number of curves with genus $\leq 9$, allowing the identification of where current theoretical understanding falls short and the potential applications of machine learning techniques.
\end{abstract}

\maketitle

\section{Introduction}
Given a smooth, compact, connected Riemann surface $\mathcal{C}$, a \bam{theta characteristic} on $\mathcal{C}$ is a line bundle $L \to \mathcal{C}$ such that $L^2 := L \otimes L$ is isomorphic to the canonical bundle $K_\mathcal{C}$. Given that $\deg K_\mathcal{C} = 2g-2$ is even, where $g = g(\mathcal{C})$ is the genus of the curve, it is necessarily true that theta characteristics exist and moreover that there are exactly $2^{2g}$ of them. The latter follows for, given a theta characteristic $L$ and a 2-torsion degree-0 line bundle $L^\prime \in \Pic^0(\mathcal{C})[2]$, necessarily $L \otimes L^\prime$ is also a theta characteristic. As such the set of characteristics $S(\mathcal{C})$ is an affine space over $\mathbb{Z}_2$ modelled on $H^1(\mathcal{C}, \mathbb{Z}_2)$. Theta characteristics are further distinguished by their \bam{parity}: a theta characteristic $L$ is called odd/even if $\dim H^0(\mathcal{C}, L)$ is odd/even. There are exactly $2^{g-1}(2^g-1)$ odd theta characteristics and $2^{g-1}(2^g+1)$ even theta characteristics \cite{Atiyah1971, Fay1973}. For a general review of theta characteristics and their applications, see \cite{Farkas2012}. 

Theta characteristics are permuted by the action of the automorphism group $\Aut\mathcal{C}$, and in his landmark paper \cite{Atiyah1971} Atiyah showed that every $f \in \Aut(\mathcal{C})$ leaves at least one characteristic invariant. Kallel and Sjerve \cite{Kallel2010} then greatly expanded upon this result, showing how to compute the orbits of characteristics given knowledge of the rational representation of their automorphism group, and using that to give further results about the number of characteristics invariant under a given automorphism. In \S\ref{sec: action of rational rep}-\ref{sec: group cohomology} we will briefly review their results, as we will then use this to give a group cohomology description of invariant characteristics. This allows us to generalise the results of \cite{Kallel2010}, giving the two main new results of this paper
\begin{proposition}
If there exists $f \in G \leq \Aut(\mathcal{C})$ such that $f$ has odd order, $\pangle{f}$ is subnormal in $G$, and $g(\mathcal{C}/\pangle{f}) = 0$, then $\mathcal{C}$ has a unique theta characteristic invariant under the action of $G$.
\end{proposition}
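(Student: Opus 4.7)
The plan is to reduce the statement to two facts: existence of a $G$-invariant theta characteristic, and vanishing of the translation module $H^1(\mathcal{C},\mathbb{Z}_2)^G$. Once both hold, the cocycle description of the affine $G$-action on $S(\mathcal{C})$ set up in the preceding sections makes $S(\mathcal{C})^G$ a torsor for $H^1(\mathcal{C},\mathbb{Z}_2)^G$, hence a single point.

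For the vanishing, note that $H^1(\mathcal{C},\mathbb{Z}_2)^G \subseteq H^1(\mathcal{C},\mathbb{Z}_2)^{\pangle{f}}$, so it suffices to kill the $\pangle{f}$-invariants. Since $|\pangle{f}|$ is odd it is a unit in $\mathbb{Z}_2$, so the Maschke/transfer averaging projector identifies $H^1(\mathcal{C},\mathbb{Z}_2)^{\pangle{f}}$ with $H^1(\mathcal{C}/\pangle{f},\mathbb{Z}_2)$, and the hypothesis $g(\mathcal{C}/\pangle{f})=0$ makes this vanish. For existence, I would invoke Atiyah's theorem on $f$ to obtain some $L_0$ with $f\cdot L_0 = L_0$; by the vanishing just shown, $L_0$ is the unique $\pangle{f}$-invariant characteristic. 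I then walk up a subnormal chain $\pangle{f}=N_0\triangleleft N_1\triangleleft\cdots\triangleleft N_k=G$: any $h\in N_{i+1}$ normalises $N_i$, so for $n\in N_i$ we have $n\cdot(h\cdot L_0)=h\cdot(h^{-1}nh)\cdot L_0 = h\cdot L_0$; thus $h\cdot L_0$ is $N_i$-invariant and in particular $\pangle{f}$-invariant, and by uniqueness $h\cdot L_0 = L_0$. Induction delivers $L_0\in S(\mathcal{C})^G$, and any other $G$-invariant is \emph{a fortiori} $\pangle{f}$-invariant, hence equals $L_0$.

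The main obstacle is the cohomological identification $H^1(\mathcal{C},\mathbb{Z}_2)^{\pangle{f}}\cong H^1(\mathcal{C}/\pangle{f},\mathbb{Z}_2)$: because $\pangle{f}$ typically fixes points on $\mathcal{C}$ the quotient map is a branched cover rather than a topological covering, so one cannot simply quote the usual covering-space statement. The oddness of $|\pangle{f}|$ is exactly the input that makes the transfer/averaging argument succeed in $\mathbb{F}_2$-coefficients and collapses the Cartan--Leray spectral sequence at the required degree. Given this input and the cocycle framework already established in the paper, the rest of the argument is formal.
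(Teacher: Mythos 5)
Your proposal is correct, but it reaches the conclusion by a genuinely different route than the paper. The paper works inside its group-cohomology framework: the affine action is a class in $H^1(G,V)$ with $V=H_1(\mathcal{C},\mathbb{Z}_2)$, and the key step is the inflation--restriction exact sequence for $N=\langle f\rangle \triangleleft H$. Since $V^N=0$, the inflation term $H^1(H/N,V^N)=H^1(H/N,0)$ vanishes, so a class whose restriction to $H^1(N,V)$ is zero (i.e.\ the existence of an $N$-invariant characteristic) must itself be zero in $H^1(H,V)$; this is then iterated along the subnormal chain, with uniqueness from $V^G\subseteq V^N=0$. You replace this cohomological step with the elementary fixed-point propagation: the unique $N_i$-invariant characteristic is permuted to another $N_i$-invariant characteristic by any $h\in N_{i+1}$ (by normality), hence fixed. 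This is valid, arguably more transparent, and shows that beyond the base case no group cohomology is needed --- it is the general fact that a unique fixed point of a subnormal subgroup in any $G$-set is $G$-fixed. The two approaches also differ at the base case: the paper quotes Kallel--Sjerve's result that for $n$ odd and $g(\mathcal{C}/\langle f\rangle)=0$ the matrix $\overline{R}^T-I$ is invertible over $\mathbb{Z}_2$, which delivers $V^{\langle f\rangle}=0$ \emph{and} $H^1(\langle f\rangle,V)=0$ simultaneously, so existence comes for free without Atiyah; you instead derive $V^{\langle f\rangle}=0$ from the transfer/Cartan--Leray isomorphism $H^1(\mathcal{C},\mathbb{Z}_2)^{\langle f\rangle}\cong H^1(\mathcal{C}/\langle f\rangle,\mathbb{Z}_2)$ for the odd-order branched quotient, and import existence from Atiyah's theorem. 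Your flagged obstacle is real but handled correctly: invertibility of the odd group order in $\mathbb{Z}_2$ is exactly what makes the transfer argument work for a branched quotient (this is also, in essence, a topological repackaging of the Kallel--Sjerve statement the paper cites, or of its own cyclic-group example). What the paper's formulation buys is integration with the rest of its machinery --- the identification of the affine action as an element of $H^1(G,V)$ is reused elsewhere (e.g.\ in the discussion around Scott's theorem) --- while your argument buys economy and a cleaner conceptual core.
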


\begin{theorem}
    There are infinitely many curves, both non-hyperelliptic and hyperelliptic, with a unique invariant characteristic. 
\end{theorem}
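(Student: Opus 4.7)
The strategy is to exhibit two explicit infinite families of curves, one non-hyperelliptic and one hyperelliptic, to each of which the preceding proposition can be applied with $G=\Aut(\mathcal{C})$; the unique invariant characteristic asserted in the theorem is then the unique $\Aut(\mathcal{C})$-invariant characteristic produced by the proposition. In both cases we must exhibit an odd-order $f\in\Aut(\mathcal{C})$ whose quotient is rational and such that $\pangle{f}$ is subnormal in $\Aut(\mathcal{C})$.

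\textbf{Non-hyperelliptic family.} For each odd $n\ge 5$, take the Fermat curve $F_n\subset\mathbb{P}^2$ cut out by $x^n+y^n+z^n=0$. It is a smooth plane curve of degree $\ge 4$, hence non-hyperelliptic, of genus $(n-1)(n-2)/2$, and these genera are pairwise distinct so yield infinitely many curves. Its automorphism group is the classical $\Aut(F_n)\cong (\mathbb{Z}/n)^2\rtimes S_3$. The map $f\colon (x:y:z)\mapsto(\zeta_n x:y:z)$ has odd order $n$ and lies inside the abelian normal subgroup $(\mathbb{Z}/n)^2$, whence the subnormal chain
\[
\pangle{f}\trianglelefteq (\mathbb{Z}/n)^2\trianglelefteq\Aut(F_n).
\]
Since $(x/z)^n=-1-(y/z)^n$ belongs to $\mathbb{C}(y/z)$ and $[\mathbb{C}(F_n):\mathbb{C}(y/z)]=n=|\pangle{f}|$, the fixed field of $f^*$ is exactly $\mathbb{C}(y/z)$, so $F_n/\pangle{f}\cong\mathbb{P}^1$. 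The proposition then supplies the required unique invariant characteristic.

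\textbf{Hyperelliptic family.} For each $g\ge 3$, take $\mathcal{C}_g\colon y^2=x^{2g+1}-1$, a hyperelliptic curve of genus $g$. The automorphism $f\colon (x,y)\mapsto(\zeta_{2g+1}x,y)$ has odd order $2g+1$ and commutes with the hyperelliptic involution $\iota$; the invariants $u=x^{2g+1}$ and $y$ satisfy $y^2=u-1$, giving $\mathcal{C}_g/\pangle{f}\cong\mathbb{P}^1$. For subnormality, pass to the reduced automorphism group $\overline{\Aut}(\mathcal{C}_g)=\Aut(\mathcal{C}_g)/\pangle{\iota}\le\mathrm{PGL}_2(\mathbb{C})$. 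The bound $2g+1\ge 7$ rules out the exceptional finite subgroups $A_4,S_4,A_5$ on order grounds, so the classification of finite subgroups of $\mathrm{PGL}_2(\mathbb{C})$ forces $\overline{\Aut}(\mathcal{C}_g)$ to be cyclic or dihedral; in either case the unique subgroup of order $2g+1$---the image of $\pangle{f}$---is normal. Pulling back produces the subnormal chain $\pangle{f}\trianglelefteq\pangle{f,\iota}\trianglelefteq\Aut(\mathcal{C}_g)$, and the proposition applies.

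\textbf{Main obstacle.} The Fermat case is essentially immediate once the classical structure of $\Aut(F_n)$ is invoked. The real work lies in the hyperelliptic verification of subnormality, which rests on the classification of finite subgroups of $\mathrm{PGL}_2(\mathbb{C})$ together with the order-based exclusion of the exceptional platonic groups; this is what forces the lower bound $g\ge 3$ (and one would have to inspect small genera individually if one wanted to push below that).
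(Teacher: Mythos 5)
Your proof is correct, and it follows the paper's overall strategy---exhibit two explicit infinite families and feed them into the subnormal-odd-cyclic proposition---but with partly different witnesses, so a comparison is worthwhile. For the hyperelliptic family you pick exactly the paper's curves $y^2 = x^{2g+1}-1$; the paper simply quotes that $\Aut(\mathcal{C}_g) \cong C_{2g+1} \times C_2$, making $C_{2g+1}$ normal outright, whereas you establish subnormality without identifying the full automorphism group, via the classification of finite subgroups of $\mathrm{PGL}_2(\mathbb{C})$ (the element of order $2g+1 \geq 7$ excludes $A_4$, $S_4$, $A_5$, and an odd-order subgroup of a cyclic or dihedral group lies in the rotation subgroup and is the unique subgroup of its order, hence normal); this is more roundabout but self-contained, at the harmless cost of restricting to $g \geq 3$. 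For the non-hyperelliptic family the paper instead uses Lefschetz curves $x^m y^n + y^m + x^n = 0$ with $\Aut \cong C_p \rtimes C_3$ for $p = m^2 - mn + n^2 > 7$ a prime congruent to $1 \bmod 3$---so the infinitude of the family rests on a number-theoretic input---while your Fermat curves $x^n+y^n+z^n=0$, $n \geq 5$ odd, with $\Aut(F_n) \cong (\mathbb{Z}/n)^2 \rtimes S_3$, need no arithmetic condition and have transparently distinct genera. A pleasant feature of your choice is that the chain $\langle f \rangle \triangleleft (\mathbb{Z}/n)^2 \triangleleft \Aut(F_n)$ genuinely exercises subnormality of length two, since $\langle f \rangle$ is not normal in $\Aut(F_n)$ (the $S_3$ factor permutes the coordinate $C_n$'s), whereas both of the paper's families get by with honestly normal odd cyclic subgroups; in that sense your example better showcases the strength of the proposition. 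Your function-field verifications that both quotients by $\langle f \rangle$ have genus $0$ are sound.
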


In \S\ref{sec: computations of orbit tables} we will describe tables of curves of genus $\leq 9$ and their orbit decompositions, stratified by the automorphism group action of the curve, given in Appendix \ref{sec: tables of orbits}. This will serve two purposes: first, it will act as a collation of plane curve models of Riemann surfaces with automorphisms; and second, it will identify cases where the current theory fails to explain why a curve has a unique invariant characteristic. Moreover, by synthesising the computational tools in Sage with those of \cite{Behn2013} we will demonstrate the applicability of machine learning tools in gaining insight. 

In particular, this machine learning will lead us to the work in \S\ref{sec: Hurwitz curves and Dolgachev} where, by extending work of Dolgachev on invariant bundles over modular curves \cite{Dolgachev1997a}, we will calculate the number of invariant characteristics for all Hurwitz curves with simple automorphism group. These we can compare with the predictions of the machine learning method.  

Part of this work was completed during the PhD thesis of LDH \cite{DisneyHogg2023}. The code that was used there is available at \url{https://github.com/DisneyHogg/Riemann_Surfaces_and_Monopoles}.

\section{Action of the Rational Representation}\label{sec: action of rational rep}

Choosing a basis $\pbrace{a_i, b_i}_{i=1}^g$ of the homology group $H_1(\mathcal{C}, \mathbb{Z})$ with canonical intersection matrix $J = \begin{psmallmatrix} 0 & I_g \\ -I_g & 0 \end{psmallmatrix}$, i.e. 
\[
a_i \circ b_j = \delta_{ij}, \quad a_i \circ a_j = 0 = b_i \circ b_j,
\]
yields the \bam{rational representation} $\rho_r : \Aut(\mathcal{C}) \to \Sp_{2g}(\mathbb{Z})$. The starting point for this paper is the work of \cite{Kallel2010}, where the authors use the identification between theta characteristic and spin structures on a Riemann surface to provide a particular choice of isomorphism $S(\mathcal{C}) \cong H_1(\mathcal{C}, \mathbb{Z}_2) \cong \mathbb{Z}^{2g}$ such that ${x} \in \mathbb{Z}_2^{2g}$ transforms under $f \in \Aut(\mathcal{C})$ as 
\begin{equation}\label{eq: transform of binary vectors}
{x} \mapsto R^T {x} + {v} {\mod 2},
\end{equation}
where $R = \rho_r(f)$ and the vector ${v}$ is computed as $v_i = \sum_{j<j^\prime} R_{ji} R_{j^\prime i} J_{j j^\prime}$. One can think of this action on ${x}$ as matrix multiplication 
\[
\begin{pmatrix}
    {x} \\ 1 
\end{pmatrix} \mapsto \begin{pmatrix}
    R^T & {v} \\ 0 & 1 
\end{pmatrix} \begin{pmatrix}
    {x} \\ 1 
\end{pmatrix}.
\]

\begin{example}
    Suppose $R=I$ is the identity matrix, then 
    \[
v_i = \sum_{j < j^\prime} \delta_{ji} \delta_{j^\prime i} J_{j j^\prime} = 0 . 
    \]
    As such, as a sense check we can see that any theta characteristic is invariant under the identity transformation.
\end{example}

This affine action is a classical result (see, for example, \cite[\S{V.1}]{Igusa1972}), but one of the strengths of Kallel and Sjerve's approach using spin structures is that the following result is not difficult to prove. 

\begin{lemma}[\cite{Johnson1980}, \S{5}]\label{lemma: parity of spin structure}
    Writing ${x} = ({u}, {v})$ for ${u}, {v} \in \mathbb{Z}_2^g$, the parity of the associated theta characteristic is $q({x}) := {u} \cdot {v}$. 
\end{lemma}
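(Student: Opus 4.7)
The plan is to use the classical Atiyah--Johnson correspondence between theta characteristics on $\mathcal{C}$ and quadratic refinements $q : H_1(\mathcal{C}, \mathbb{Z}_2) \to \mathbb{Z}_2$ of the mod-$2$ intersection pairing, under which the parity $\dim H^0(\mathcal{C}, L) \bmod 2$ of a theta characteristic $L$ equals the Arf invariant of the corresponding quadratic form $q_L$. Granted this input, the lemma reduces to evaluating an Arf invariant in coordinates and is essentially linear algebra.

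First I would recall that on a symplectic basis $\{a_i, b_i\}$ the Arf invariant of any quadratic refinement $q$ is $\mathrm{Arf}(q) = \sum_{i=1}^g q(a_i) q(b_i)$, so it suffices to evaluate $q_L$ on the basis cycles. Since $S(\mathcal{C})$ is an affine space over $H^1(\mathcal{C}, \mathbb{Z}_2) \cong H_1(\mathcal{C}, \mathbb{Z}_2)^\vee$ and the difference of two refinements of the same bilinear form is a linear form, translation of the theta characteristic by a class $\ell \in H^1(\mathcal{C}, \mathbb{Z}_2)$ sends $q \mapsto q + \ell$.

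Next I would fix the reference theta characteristic $L_0$ corresponding to $x = 0$ under the Kallel--Sjerve identification $S(\mathcal{C}) \cong \mathbb{Z}_2^{2g}$ and verify that $L_0$ is the spin structure whose quadratic refinement $q_0$ satisfies $q_0(a_i) = q_0(b_i) = 0$ for all $i$. Given this normalisation, a general $x = (u,v)$ corresponds to the linear form $\ell_x$ with $\ell_x(a_i) = u_i$ and $\ell_x(b_i) = v_i$, so that $q_L(a_i) = u_i$ and $q_L(b_i) = v_i$, and the Arf formula immediately yields parity $= \sum_i u_i v_i = u \cdot v$.

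The main obstacle is precisely the normalisation step: verifying that the specific isomorphism $S(\mathcal{C}) \cong \mathbb{Z}_2^{2g}$ chosen in \cite{Kallel2010}, which was tuned so as to produce the clean affine formula \eqref{eq: transform of binary vectors}, does send $0$ to the even theta characteristic whose quadratic refinement vanishes on the distinguished symplectic basis. This requires unpacking Johnson's construction of $q$ from a spin structure via framings of embedded loops representing the $a_i$ and $b_i$, and checking that the basepoint spin structure used in \cite{Kallel2010} assigns the bounding framing to each of these basis cycles. Once this bookkeeping is complete, the computation above concludes the proof.
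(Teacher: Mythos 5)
Your proposal is correct, but note that the paper itself contains no proof of this lemma: it is imported wholesale from Johnson \cite{Johnson1980}, \S 5, with the surrounding text merely remarking that the spin-structure formulation makes it ``not difficult to prove.'' What you have written is a faithful reconstruction of that standard argument: parity of a theta characteristic equals the Arf invariant of the associated quadratic refinement (Atiyah/Mumford/Johnson), the refinements form a torsor over $H^1(\mathcal{C},\mathbb{Z}_2)$ with the correspondence equivariant for the torsor structures, and on a symplectic basis $\mathrm{Arf}(q)=\sum_{i=1}^g q(a_i)q(b_i)$. One refinement worth making: the normalisation step you flag as the ``main obstacle'' can be discharged without unpacking Johnson's framings, because in the Kallel--Sjerve identification the coordinates of $x$ are \emph{by construction} the values of $q$ on the symplectic basis, $u_i=q(a_i)$ and $v_i=q(b_i)$. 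You can verify this is forced by the transformation law \eqref{eq: transform of binary vectors}: expanding $q\bigl(\sum_j R_{ji}e_j\bigr)=\sum_j R_{ji}\,q(e_j)+\sum_{j<j'}R_{ji}R_{j'i}H(e_j,e_{j'})$ and using $H(e_j,e_{j'})=J_{jj'}$ reproduces exactly the affine action $x\mapsto R^Tx+v$ with $v_i=\sum_{j<j'}R_{ji}R_{j'i}J_{jj'}$, so the cocycle $v$ in \cite{Kallel2010} is precisely the quadratic cross-term, and the lemma then \emph{is} the Arf computation with no separate basepoint check. Two harmless slips: under the duality $\ell_x=H(x,\cdot)$ one actually gets $\ell_x(a_i)=v_i$ and $\ell_x(b_i)=u_i$ (swapped from what you wrote), but the symmetry of $\sum_i q(a_i)q(b_i)$ makes this immaterial; and strictly one should also cite the equivariance of the Atiyah--Johnson correspondence with the torsor structures, which is standard but is the point where geometry (rather than linear algebra) enters.
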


As a result of Lemma \ref{lemma: parity of spin structure} we see the parity of a spin structure is given by a quadratic form on $H_1(\mathcal{C}, \mathbb{Z}_2)$ such that the associated bilinear form $H({x}, {y}) := q({x} + {y}) - q({x}) - q({y})$ is the reduction mod 2 of the intersection pairing on $H_1(\mathcal{C}, \mathbb{Z})$. 

Given explicitly the rational representation, Equation (\ref{eq: transform of binary vectors}) and Lemma \ref{lemma: parity of spin structure} provide a fast and exact method to compute the orbit decomposition of theta characteristics (as opposed to slow and numerical methods using the analytical representation and the Abel-Jacobi map, see \cite{DisneyHogg2023, DisneyHogg2022b}), split by parity. An implementation of this method may be seen in the code from \cite{DisneyHogg2022b} available at \url{https://github.com/DisneyHogg/Brings\_Curve}, and this is how we shall compute tables of orbit decomposition in \S\ref{sec: computations of orbit tables}.

We shall at this stage introduce another concept which shall be used later, that of the \bam{signature} of the rational representation. 
\begin{definition}[\cite{Magaard2002}, \S2]\label{def: signature and ramification type}
	Take $G \leq \Aut(\mathcal{C})$ acting on a curve and denote by $Q_i$, $i=1, \dots, r$, the branch points of the quotient map $\pi : \mathcal{C} \to \mathcal{C}/G$. Denote by $C_i$ the conjugacy class of $\Stab(P_i)$ for any $P_i \in \pi^{-1}(Q_i)$, and let $c_i$ denote the order of an element of $C_i$. As the $Q_i$ are branch points, the $C_i$ are nontrivial conjugacy classes, and $c_i >1$. We call the \bam{ramification type} of the $G$ action on $\mathcal{C}$ the data of the tuple $(g(\mathcal{C}), G, (C_1, \dots, C_r))$. The vector $\bm{c} = (c_1, \dots, c_r)$ (or sometimes $(g_0; c_1, \dots, c_r)$ where $g_0 = g(\mathcal{C}/G)$ is the \bam{orbit genus}) is called the \bam{signature}. We will use exponents to indicate how many times a value of $c_i$ is repeated as in \cite{Broughton1991}, e.g. the signature $(0; 2, 2, 2, 3)$ is written as $(0; 2^3, 3)$. A \bam{generating vector} for the action is $\pbrace{\alpha_1, \beta_1, \dots, \alpha_{g_0}, \beta_{g_0}, \gamma_1, \dots, \gamma_r} \subset G$ such that 
	\[
	\gamma_1^{c_1} = \dots = \gamma_r^{c_r} = \prod_{i=1}^{g_0} (\alpha_i \beta_i \alpha_i^{-1} \beta_i^{-1}) \prod_{j=1}^r \gamma_j = 1.
	\]
\end{definition}

The Riemann-Hurwitz theorem immediately says that 
\begin{equation}\label{eq: RH for group quotients}
2g-2 = \abs{G} \psquare{(2g_0-2) + \sum_i \pround{1 - \frac{1}{c_i}}}. 
\end{equation}

The (character of the) rational representation completely determines the corresponding signature \cite[p.~401]{Ries1993}. Conversely, given a signature satisfying Equation (\ref{eq: RH for group quotients}) and a corresponding generating vector in an abstract group $G$, there exists a Riemann surface with associated $G$ action, and the generating vector determines the (character of the) rational representation via the Eichler trace formula \cite{Breuer2000}.  

\section{Invariant Characteristics and Group Cohomology}\label{sec: group cohomology}

Suppose we had parametrised our set of theta characteristics differently in terms of ${x}^\prime \in \mathbb{Z}_2^{2g}$ where ${x} = {x}^\prime + {y}$ for some fixed ${y} \in \mathbb{Z}_2^{2g}$. This is reparametrisation by translation. One can derive the corresponding action on ${x}^\prime$ by noting 
\begin{align*}
    {x} = {x}^\prime + {y} &\mapsto R^T({x}^\prime + {y}) + {v}, \\
    & = R^T{x}^\prime + \psquare{{v} + (R^T - I){y}} + {y},
\end{align*}
and so
\[
{x}^\prime \mapsto R^T {x}^\prime + {v}_y,
\]
where ${v}_y = {v} + (R^T-I) {y}$. If ${y}$ were fixed by the action (\ref{eq: transform of binary vectors}) then ${v}_y=0$ and moreover the converse is true. As such, we have seen the following proposition. 

\begin{prop}\label{prop: invariant characteristic exists iff affine rep is equivalent to linear}
    There is an invariant characteristic on a curve $\mathcal{C}$ if and only if the corresponding affine representation on $\mathbb{Z}_2^{2 g}$
    \[
\begin{pmatrix}
    {x} \\ 1 
\end{pmatrix} \mapsto \begin{pmatrix}
    R^T & {v} \\ 0 & 1 
\end{pmatrix} \begin{pmatrix}
    {x} \\ 1 
\end{pmatrix}
\]
is equivalent by a translation ${x} \to {x}^\prime := {x} - {y}$ to the linear action 
    \[
\begin{pmatrix}
    {x}^\prime \\ 1 
\end{pmatrix} \mapsto \begin{pmatrix}
    R^T & 0 \\ 0 & 1 
\end{pmatrix} \begin{pmatrix}
    {x}^\prime \\ 1 
\end{pmatrix}
\]
for some ${y} \in \mathbb{Z}_2^{2 g}$. 
\end{prop}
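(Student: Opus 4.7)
The plan is to formalize the bookkeeping already begun in the paragraph immediately preceding the proposition, applied uniformly to every element of $G$. For each $g \in G$, write $R_g = \rho_r(g)$ with associated shift $v_g \in \mathbb{Z}_2^{2g}$ coming from Equation (\ref{eq: transform of binary vectors}). First I would record the tautology that a characteristic $y \in \mathbb{Z}_2^{2g}$ is $G$-invariant precisely when $R_g^T y + v_g \equiv y \pmod 2$ for every $g \in G$, equivalently (working over $\mathbb{Z}_2$) when $(R_g^T - I) y \equiv v_g \pmod 2$ for all $g \in G$.

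Next I would invoke the translation computation displayed in the excerpt, now read as a statement about the entire family of affine maps: under the change of coordinates $x = x' + y$, the affine action $x \mapsto R_g^T x + v_g$ becomes $x' \mapsto R_g^T x' + v_{g,y}$ with $v_{g,y} = v_g + (R_g^T - I)y$. Crucially the translation $y$ is a single element of $\mathbb{Z}_2^{2g}$, independent of $g$, so conjugation by it acts simultaneously on every affine map in the representation.

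The proposition then becomes immediate in both directions. For $(\Rightarrow)$, if a $G$-invariant characteristic $y$ exists, the fixed-point condition above gives $v_g + (R_g^T - I)y = 0$ for every $g$, so $v_{g,y} = 0$ and the translated representation is linear. For $(\Leftarrow)$, if translation by some $y$ produces the linear action, then $v_g + (R_g^T - I)y = 0$ for every $g$, which is exactly the fixed-point condition, so $y$ represents a $G$-invariant characteristic.

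There is no genuine obstacle here; the content is entirely packaged in the translation identity $v_{g,y} = v_g + (R_g^T - I)y$ together with the observation that a single $y$ serves every $g \in G$ simultaneously. The only choice to make is one of emphasis: stating the result at the level of the whole representation makes clear the cohomological flavor (the family $\{v_g\}_{g \in G}$ is a 1-cocycle, and invariant characteristics correspond to its being a coboundary) that the following section presumably develops.
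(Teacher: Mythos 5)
Your proposal is correct and is essentially the paper's own argument: the paper proves this proposition implicitly via the translation computation in the paragraph immediately preceding it (concluding ``we have seen the following proposition''), namely that $v_y = v + (R^T - I)y$ vanishes exactly when $y$ is fixed by the affine action. Your only addition is to make explicit that the single translation $y$ works uniformly for every $g \in G$, which the paper's shorthand $(R, v)$ leaves implicit.
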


To utilize Proposition \ref{prop: invariant characteristic exists iff affine rep is equivalent to linear}, we note that the question of whether an affine representation can be reduced to a linear one may be presented as a cohomology problem\footnote{LDH is grateful to Andrew Beckett for highlighting this to him.}. In particular, given a group $G$ and a (left) $G$-representation $\rho : G \to \GL(V)$ we have the following results.
\begin{prop}\label{prop: affine representations are cocycles}
An affine representation of $G$ on $V$ which acts multiplicatively via $\rho$ determines a 1-cocycle in the group cohomology\footnote{For an introduction to group cohomology with necessary definitions see \cite[\S6]{Weibel1995}. We shall from here on in drop the $\operatorname{Grp}$ subscript as through context it shall not cause confusion with any other cohomologies in this work.} $H^1_{\operatorname{Grp}}(G, V)$ (making $V$ into a $G$-module in the natural way with $\rho$) with the standard linear representation $G \times V \to V$, $(g, x) = \rho(g)x$, corresponding to the zero 1-cocycle. 
\end{prop}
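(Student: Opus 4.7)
The plan is to unpack the group action axiom for a general affine representation and observe that the resulting identity on the translation part is precisely the 1-cocycle condition.

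First, I would fix notation. A general affine representation of $G$ on $V$ with linear part $\rho$ takes the form
\[
g \cdot x = \rho(g) x + c(g), \qquad g \in G,\ x \in V,
\]
for some function $c : G \to V$. This is how the action in Proposition \ref{prop: invariant characteristic exists iff affine rep is equivalent to linear} looks, with $c(f) = v$ and $\rho(f) = R^T$. The goal is then to show $c$ is a 1-cocycle for the $G$-module structure on $V$ induced by $\rho$.

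Next, I would impose the two axioms of a group action. Associativity, $(gh) \cdot x = g \cdot (h \cdot x)$, expands to
\[
\rho(gh) x + c(gh) \;=\; \rho(g)\rho(h) x + \rho(g) c(h) + c(g).
\]
Since $\rho$ is multiplicative, the linear parts cancel and one is left with $c(gh) = \rho(g) c(h) + c(g)$, which, written in $G$-module notation $g \cdot v := \rho(g) v$, is the standard 1-cocycle condition
\[
c(gh) = g \cdot c(h) + c(g).
\]
The identity axiom $e \cdot x = x$ forces $c(e) = 0$, consistent with the cocycle condition. Hence $c \in Z^1(G, V)$ and descends to a class in $H^1(G, V)$.

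Finally, the linear representation $(g, x) \mapsto \rho(g) x$ is the special case $c \equiv 0$, which is manifestly the zero element of $Z^1(G, V)$ and hence represents $0 \in H^1(G, V)$. There is really no substantive obstacle here beyond keeping the left/right conventions consistent; the only place where care is required is noting that the excerpt writes the action with $R^T$ rather than $R$, so when one later identifies this construction with the cocycle for the rational representation one must take $\rho$ to be the contragredient of $\rho_r$ (equivalently, use $\rho_r(f)^T$), but this does not affect the cocycle argument itself.
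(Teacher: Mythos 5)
Your proposal is correct and follows essentially the same route as the paper's proof: unpack the associativity axiom $g\cdot(h\cdot x)=(gh)\cdot x$, cancel the linear parts using multiplicativity of $\rho$, and read off the identity $c(gh)=\rho(g)c(h)+c(g)$ as the 1-cocycle condition, with the linear action being the case $c\equiv 0$. The only cosmetic difference is that you obtain $c(e)=0$ from the identity axiom, whereas the paper deduces it from the cocycle condition by setting $g=e$; these are interchangeable.
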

\begin{proof}
An affine representation acting multiplicatively by $\rho$ is defined by 
\begin{align*}
G \times V &\to V, \\
(g, x) &\mapsto g \cdot x := \rho(g)x + v(g).
\end{align*}
for some set map $v : G \to V$. By definition the set map $v$ is exactly a 1-cochain in group cohomology, with the linear representation giving the zero 1-cochain. Moreover, to truly get an action we require $\forall \, g, h \in G, \, x \in V$, $g\cdot(h \cdot x) = (gh) \cdot x$. We can compute from the definition 
\begin{align*}
g \cdot (h \cdot x) &= \rho(g)(h \cdot x) + v(g) , \\ 
&= \rho(g)[\rho(h)x + v(h)] + v(g) , \\
&=\rho(gh)x + \rho(g) v(h) + v(g), \\
&=(gh)\cdot x + \rho(g) v(h) - v({gh}) + v(g),
\end{align*}
and so we must have 
\begin{equation}\label{eq: affine representation closed condition}
\forall \, g, h \in G, \, 0=\rho(g) v(h) - v({gh}) + v(g). 
\end{equation}
In particular, setting $g=e$ the identity element in Equation (\ref{eq: affine representation closed condition}) shows $v(e)=0$. Equation (\ref{eq: affine representation closed condition}) is exactly the condition that the 1-cochain $v$ is in fact a 1-cocycle \cite[Example 6.5.6]{Weibel1995}.
\end{proof}

\begin{prop}\label{prop: equivalent affine reps differe by coboundaries}
    Two affine representations as defined in Proposition \ref{prop: affine representations are cocycles} are equivalent under a translation of $V$ if and only if the associated 1-cocycle is a 1-coboundary. 
\end{prop}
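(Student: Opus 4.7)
The plan is to unpack the definition of translation-equivalence of affine representations and match it directly against the formula for a 1-coboundary in the bar resolution. Fix two affine actions on $V$, both multiplicative with respect to the same linear representation $\rho$, and write them as $g \cdot x = \rho(g) x + v(g)$ and $g \cdot^\prime x = \rho(g) x + v^\prime(g)$, with $v, v^\prime \in Z^1(G,V)$ the associated 1-cocycles produced by Proposition \ref{prop: affine representations are cocycles}.

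First I would write out what it means for these to be equivalent under a translation $\phi_y(x) := x - y$ for some $y \in V$, namely $\phi_y(g \cdot x) = g \cdot^\prime \phi_y(x)$ for all $g \in G$ and all $x \in V$. Expanding both sides using the definitions and cancelling the common term $\rho(g) x$ reduces the condition to
\[
v^\prime(g) - v(g) = \rho(g) y - y, \qquad \forall g \in G.
\]
This is precisely the calculation already carried out in \S\ref{sec: group cohomology} immediately before Proposition \ref{prop: invariant characteristic exists iff affine rep is equivalent to linear}, simply applied to the difference of two cocycles instead of to a single affine action being straightened to the linear one.

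Second I would observe that, in the standard bar resolution computing $H^1(G,V)$, the coboundary of a 0-cochain $y \in V$ is by definition $(\partial y)(g) = \rho(g) y - y$; see the references to \cite{Weibel1995} cited in the previous proposition. Thus the displayed equation is exactly the assertion $v^\prime - v = \partial y$, i.e. the two cocycles are cohomologous. The converse direction is obtained by reading the same chain of equivalences backwards: starting from $v^\prime - v = \partial y$, the translation $\phi_y$ automatically intertwines the two affine actions.

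There is no substantive obstacle here; the content is really just a dictionary between two ways of phrasing the same algebraic identity. The only thing I would pay attention to is the sign convention for the translation, since the paper took $x^\prime = x - y$ in the proof of Proposition \ref{prop: invariant characteristic exists iff affine rep is equivalent to linear}. Keeping that convention ensures the explicit formula $v_y = v + (\rho(g) - I) y$ already appearing in the text reads as the coboundary of $y$ without any change of sign, so that Proposition \ref{prop: invariant characteristic exists iff affine rep is equivalent to linear} is recovered as the special case $v^\prime \equiv 0$.
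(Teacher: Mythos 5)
Your proposal is correct and follows essentially the same route as the paper's proof: a direct expansion of the translated action showing the cocycle shifts by $(\rho(g)-I)y$, followed by matching this against Weibel's formula for a $1$-coboundary. The only cosmetic difference is that you phrase it symmetrically for two arbitrary cocycles $v, v'$ and recover the linear case as $v' \equiv 0$, whereas the paper translates a single affine action and asks when the result is linear --- the underlying computation is identical.
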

\begin{proof}
Fixing $y \in V$ and $v : G \to V$ defining an affine representation we have that 
\begin{align*}
g \cdot (x+y) &= \rho(g)(x+y) + v(g) , \\
&= \pbrace{\rho(g)x + \psquare{v(g) + (\rho(g)-I)y}} + y.
\end{align*} 
This defines a different affine action on $V$ given by 1-cocycle $v_y(g) := v(g) + (\rho(g)-I)y$. This new affine action is actually linear if and only if 
\begin{equation}\label{eq: affine representation boundary condition}
\forall g \in G, \, (\rho(g)-I)y + v(g) = 0 \Leftrightarrow \forall g \in G, \, g \cdot y = y \Leftrightarrow y \in V^G, 
\end{equation}
where we have used $V^G$ to denote the subset of $V$ invariant under $G$. The condition that $v(g) = \rho(g)y - y$ for some $y \in V$ is exactly the condition that $v$ is a 1-coboundary \cite[Example 6.5.6]{Weibel1995}. 
\end{proof}

In the case at hand of considering the group action of the automorphism group on theta characteristics the representation will be the reduction mod 2 (with the mod 2 reduction of $R$ denoted by $\overline{R}$, following \cite{Kallel2010}) of the transpose of the rational representation $\rho = \overline{\rho}_r^T$ acting on $V = H_1(\mathcal{C}, \mathbb{Z}_2) \cong \mathbb{Z}_2^{2 g}$. Moreover, we can count the number of invariant characteristics as the size of $H^0(G, V)$, as $H^0(G, V) = V^G$ is exactly the submodule of invariants. This fact gives us an immediate refinement of \cite[Corollary 1.3]{Kallel2010}.
\begin{prop}\label{prop: number of invariant characteristics}
    The number of characteristics invariant under the action of the whole group is either 0 or $2^k$, where $k=\dim H^0(G, V)$ is the dimension\footnote{When writing $\dim$ for the dimension of the group cohomology of a $\mathbb{Z}_2$ vector space, we naturally mean the dimension over $\mathbb{Z}_2$.} of the subspace of invariants. 
\end{prop}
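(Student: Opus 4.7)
The plan is to combine Propositions 3.3 and 3.4 with the basic fact that $S(\mathcal{C})$ is a torsor for $V = H_1(\mathcal{C},\mathbb{Z}_2)$, and then observe that $H^0(G,V)=V^G$ is a finite-dimensional $\mathbb{Z}_2$-vector space, so has cardinality a power of $2$.

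More concretely, I would split into two cases. If the set of $G$-invariant characteristics is empty, the count is $0$ and we are done. Otherwise, pick an invariant characteristic $x_0 \in S(\mathcal{C})^G$. By Proposition \ref{prop: equivalent affine reps differe by coboundaries} (reading its proof: $y$ may be chosen precisely when the cocycle $v$ is a coboundary, and the translation that kills $v$ sends any fixed point of the affine action to an element of $V^G$), translating coordinates by $x_0$ converts the affine action into the linear action $x' \mapsto R^T x'$. A characteristic $x = x' + x_0$ is then $G$-invariant if and only if $R^T x' = x'$ for every $R$ in the image of the rational representation, i.e.\ if and only if $x' \in V^G = H^0(G,V)$. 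Hence translation by $x_0$ furnishes a bijection
\[
S(\mathcal{C})^G \;\longleftrightarrow\; H^0(G, V).
\]

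Finally, since $V$ is a $\mathbb{Z}_2$-vector space and $G$ acts $\mathbb{Z}_2$-linearly, the subspace of invariants $V^G$ is itself a $\mathbb{Z}_2$-subspace, so $|V^G| = 2^{\dim_{\mathbb{Z}_2} V^G} = 2^k$. This gives the stated dichotomy. There is really no obstacle here: all the substance has already been packaged into Propositions \ref{prop: affine representations are cocycles} and \ref{prop: equivalent affine reps differe by coboundaries}, and what remains is the straightforward observation that once one invariant characteristic is fixed, the $G$-invariants form a $\mathbb{Z}_2$-subspace under the induced affine identification $S(\mathcal{C}) \cong V$.
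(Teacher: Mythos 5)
Your proof is correct and takes essentially the same route as the paper: the paper's one-line proof (``immediate from the fact $H^0(G,V)$ is a vector space over $\mathbb{Z}_2$'') implicitly relies on exactly the translation argument you make explicit, which the paper has already packaged into Proposition~\ref{prop: equivalent affine reps differe by coboundaries} and the discussion identifying $H^0(G,V)=V^G$ with the invariants. Your bijection $S(\mathcal{C})^G \leftrightarrow V^G$, obtained by translating by one fixed invariant characteristic and handling the empty case separately, is simply a careful unpacking of what the paper leaves implicit.
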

\begin{proof}
    This is immediate from the fact $H^0(G, V)$ is a vector space over $\mathbb{Z}_2$. 
\end{proof}

We shall now want to consider two simple examples, for which we require the (proof of the) following lemma.
\begin{lemma}[\cite{Atiyah1971}, Lemma 5.1]\label{lemma: affine transformation fixing quadratic has fixed point}
	Let $V$ be a finite-dimensional $\mathbb{Z}_2$ vector space and $q : V \to \mathbb{Z}_2$ a quadratic function fixed under an affine transformation $x \mapsto Ax+b$ whose associated bilinear $H$ defined by 
	\[
	H(x, y) = q(x+y) - q(x) - q(y)
	\]
	is non-degenerate. Then the affine transformation has a fixed point.
\end{lemma}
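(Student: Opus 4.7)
The goal is to find $x_0 \in V$ with $Ax_0 + b = x_0$, equivalently with $(A-I)x_0 = b$ (using $-b = b$ in $\mathbb{Z}_2$). Thus the lemma reduces to showing the single membership $b \in \operatorname{Im}(A-I)$. My plan is to extract this from the invariance of $q$ by first showing $A$ is an orthogonal transformation for $H$, and then exploiting non-degeneracy of $H$ to dualize the image condition into an orthogonality condition on $\ker(A-I)$.

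First, I would unpack the invariance $q(Ax+b) = q(x)$ using the defining identity $q(u+w) = q(u) + q(w) + H(u,w)$ to obtain
\[
q(Ax) + q(b) + H(Ax, b) = q(x).
\]
Setting $x = 0$ (noting $q(0) = 0$, which follows tautologically from the defining identity applied to $x = y = 0$) gives $q(b) = 0$, so the identity simplifies to $q(Ax) + H(Ax, b) = q(x)$ for all $x \in V$. Applying this with $x$ replaced by $x+y$, expanding both $q(A(x+y))$ and $q(x+y)$ via the bilinear form, and cancelling the individual relations for $x$ and $y$ separately, forces $H(Ax, Ay) = H(x,y)$. Hence $A$ preserves $H$, so $A$ is invertible and its adjoint with respect to $H$ is $A^{-1}$.

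The next step uses non-degeneracy of $H$: this identifies $V$ with its dual via $H$, and under this identification the annihilator of $\operatorname{Im}(A-I)$ is the kernel of the $H$-adjoint of $A - I$, which is $\ker(A^{-1} - I) = \ker(A - I)$. Therefore
\[
\operatorname{Im}(A-I) \,=\, \ker(A-I)^{\perp},
\]
and to establish $b \in \operatorname{Im}(A-I)$ it suffices to check $H(b, y) = 0$ for every $y$ with $Ay = y$. But substituting such a $y$ into the relation $q(Ax) + H(Ax, b) = q(x)$ gives $q(y) + H(y, b) = q(y)$, so $H(y,b) = 0$ as required.

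The argument is short, and the main conceptual content sits in the dualization step: one must recognize that the fixed point problem is governed entirely by $\ker(A-I)$ through the $H$-pairing, and that the hypothesis on $q$ pours directly into exactly this orthogonality. If there is a subtle obstacle it is the verification that $A$ preserves $H$ (which is genuinely needed to make $A-I$ self-dual up to sign); thankfully this falls out of a single bilinear expansion. No hypothesis on $q(0)$ is needed separately, since the bilinear axiom forces $q(0) = 0$ automatically in characteristic $2$.
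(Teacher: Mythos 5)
Your proof is correct and follows essentially the same route as the paper's (which reproduces Atiyah's original argument): derive $q(b)=0$ and $q(Ax)+H(Ax,b)=q(x)$ from invariance, show $A$ preserves $H$, and then use non-degeneracy to dualize $b\in\operatorname{Im}(A-I)$ into the orthogonality $H(b,y)=0$ for $y\in\operatorname{Ker}(A-I)$, which the simplified invariance relation supplies immediately. The only cosmetic difference is that you identify $\operatorname{Im}(A-I)=\operatorname{Ker}(A-I)^{\perp}$ directly via $A^{\ast}=A^{-1}$, whereas the paper phrases the same duality through $\operatorname{Ker}(A-I)^{\ast}$; you also make explicit the harmless detail $q(0)=0$, which the paper leaves implicit.
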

\begin{proof} We shall recall the proof of \cite{Atiyah1971}. As the transform preserves $q$ we get 
	\[
	q(x) = q(Ax+b) = q(Ax) + q(b) + H(Ax, b).
	\]
	Setting $x=0$ gives $q(b) = 0$ and hence $q(x) = q(Ax) + H(Ax, b)$. We can thus say 
	\begin{align*}
		H(x, y) &= q(x+y) - q(x) - q(y) , \\
		&= \psquare{q(A(x+y)) + H(A(x+y), b)} - \psquare{q(Ax) + H(Ax, b)} 
  \\ &\phantom{=} \quad
           - \psquare{q(Ay) + H(Ay, b)}, \\
		&= \psquare{q(Ax + Ay) - q(Ax) - q(Ay)} , \\
		&= H(Ax, Ay) ,
	\end{align*}
	and so defining $A^\ast$ to be the dual of $A$ with respect to the non-degenerate inner product $H$ we have $A^\ast A = I$. Suppose we have $x \in \Ker (A-I)^\ast$, then 
	\begin{align*}
		A^\ast x = x \Rightarrow Ax = x \Rightarrow H(x,b) = 0
	\end{align*}
	and hence we know $b \perp \Ker (A-I)^\ast$. Now $(\Ker (A-I)^\ast)^\perp = \im(A-I)$, and so 
	\[
	b \in \im(A-I) \Rightarrow \exists y \in V, \, b = (A-I)y \Rightarrow \exists y \in V, \, Ay+b = y. 
	\]
\end{proof}

We are now ready to consider two examples. 

\begin{example}[Hyperelliptic Involution]\label{ex: hyperelliptic involution fixes all characteristics 2}
Suppose we just have $G = C_2 = \pbrace{\pm 1}$, where the generator of the $C_2$ is the hyperelliptic involution, for which the rational representation is given by $R = -I$. For a cochain $v : G \to V$ to be closed it must satisfy $v(1) = 0 \in V$, and then $v(-1) := x$ is arbitrary. It is a coboundary if in addition there exists $ y \in V$, $v(-1) = y - y = 0$, and so with the hyperelliptic representation the cohomology group is given by $H^1(C_2, V) = V$. Note this is what we would expect: if the representation is trivial, then for all ${y}\in V$ we have ${v}_y = {v}$, meaning that the action is always given by the specific shift ${v}$. We may now use Lemma \ref{lemma: affine transformation fixing quadratic has fixed point}. The quadratic function on $H_1(\mathcal{C}, \mathbb{Z}_2)$ given by the parity is preserved under the affine action of $\Aut(\mathcal{C})$, so the affine transformation given by the hyperelliptic involution has a fixed point. The vector $b$ in the proof of Lemma \ref{lemma: affine transformation fixing quadratic has fixed point} is the value $v(-1)$, and so we must have $v(-1) \in \im 0 = 0$. Hence we have an invariant spin structure, and moreover we have $2^{2 g} = \abs{V}$ of them. 
\end{example}

\begin{example}[Cyclic groups]\label{ex: group cohomology cyclic group}
Let us now try and understand \cite[Theorem 1.1]{Kallel2010} in this language of group cohomology. Suppose we take a cyclic automorphism group $\pangle{f} = G$, and let $n$ be the order of $f$. A closed cochain must have $v(1) =0$ as before, and then it is specified by $v(f)$, which is subject only to the condition that 
\begin{equation}\label{eq: constraint of cocycle image}
(I + A + \dots + A^{n-1})v(f) = v(f^n) = v(1) = 0 , 
\end{equation}
where $A = \bar{R}^T = \rho(f)$ (already reduced mod 2). Provided $A \neq I$, given that we have $(A-I)(\sum_{k=0}^{n-1} A^k) = A^n-I=0$, the sum $\sum_{k=0}^{n-1} A^k$ has nontrivial kernel in which some $v(f)\ne0$ must lie. A coboundary is given by $v(f) = (A-I)y$ for some $y \in V$, and hence 
$$
H^1(G, V) \cong \Ker (I + \dots + A^{n-1})/\operatorname{Im}(A-I).
$$ 
Note this result is contained in \cite[Theorem 6.2.2]{Weibel1995}.

Certainly if $A-I$ is invertible, then there is necessarily an invariant spin structure as $H^1 = 0$, and it is unique as then $H^0(G, V) = \Ker (A-I) = 0$. Moreover the converse is true that if there is a unique invariant characteristic then $A-I$ is invertible, as $\Ker (A-I)=0$ is exactly the condition for invertibility. By the argument required in \cite[p.~17]{Braden2012}, we can see that for $A-I$ to be invertible, it is necessary that $\sum_{k=0}^{n-1}A^k=0$, and in fact this is sufficient when $n$ is odd as we can simply write down the inverse which is $n^{-1} \sum_{k=1}^{n} kA^{k-1}$. Note also certainly if $n=2$ then $A-I$ cannot be invertible as 
\[
(A-I)^2 = A^2 - 2A + I = 0.
\]
This in fact generalises; if $n=2^k$ then $A-I$ cannot be invertible as 
\[
(A-I)^{n} = \sum_{i=0}^{n} \binom{n}{i} A^i = A^n - I = 0
\]
using Lucas' theorem for the binomial coefficient of a prime power mod that prime (see for example \cite[Theorem 3]{Fine1947}, or alternatively prove this with induction).

Now, as in Example \ref{ex: hyperelliptic involution fixes all characteristics 2}, the quadratic function given by the parity is preserved under the affine transformation generating the group action on $H_1(\mathcal{C}, \mathbb{Z}_2)$, so the transformation has a fixed point. As such, the remaining question is just how many invariant characteristics there are. This is given by $\dim \ker (A-I)$ and Proposition \ref{prop: number of invariant characteristics}, and \cite{Kallel2010} shows how to compute whether this is zero using cyclotomic polynomials. Importantly they show that a necessary and sufficient condition when $n$ is odd is that the genus of the quotient $g = g(\mathcal{C}/\pangle{f})$, related to $f$ by $\operatorname{rank}_{\mathbb{Z}} \ker(\rho_r(f)-I) = 2 g$, is 0. By combining the work of \cite{Ries1992}, we see that when $n$ is even the necessary and sufficient conditions are that $g(\mathcal{C}/\pangle{f^{2^l}}) = 0$ for all $l$ such that $2^l | n$. 
\end{example}
\begin{remark}
    We can also use \cite[Theorem 1]{Scott1977} to say more. Namely, suppose that $G$ is generated by elements $\gamma_1, \dots, \gamma_r$ such that $\prod_i \gamma_i = 1$, then  
    \begin{align*}
        \sum_{i=1}^r\psquare{2g - \dim H^0(\pangle{\gamma_i}, V)} &\geq \psquare{2g - \dim H^0(G, V)} + \psquare{2g - \dim H^0(G, V^\ast)}, \\
        \Rightarrow  \dim H^0(G, V) & \geq \sum_i \dim H^0(\pangle{\gamma_i}, V) - 2g(r-2) - \dim H^0(G, V^\ast),
    \end{align*}
    where $V^\ast = \Hom(V, \mathbb{Z}_2)$ is the dual $G$-module. Scott defines the \bam{homology invariant} $h$ to be the nonnegative integer which is the deficit in the inequality, as well as $\tilde{H}^1(G, V)$ to be the subgroup of $H^1(G, V)$ which restricts to $0 \in H^1(\pangle{\gamma_i}, V)$ for each $i$, showing $h \geq \dim \tilde{H}^1(G, V)$. As such, we can use knowledge of the number of characteristics invariant under the $\pangle{\gamma_i}$ to give information about how many possible invariant characteristics there may be.
\end{remark}

A key question we shall want to address using group cohomology is when the action of $G$ has a \bam{Unique Invariant Characteristic (UIC)}. To make more progress, we use the inflation-restriction exact sequence. That is for normal subgroup $N \triangleleft G$ and abelian group $V$ with $G$ action we have \cite[p.~196]{Weibel1995}
\[
0 \to H^1(G/N, V^N) \to H^1(G, V) \to H^1(N, V)^{G/N} \to H^2(G/N, V^N) \to H^2(G, V) , 
\]
so named because the 3 inner maps are \textbf{inflation}, \textbf{restriction}, and \textbf{transgression}. Suppose we know $H^1(N, V) = 0$ and $H^0(N, V) = V^N = 0$ (as we have when $N$ is an odd-order cyclic group quotienting to $\mathbb{P}^1$), then denoting $K = G/N$ we have 
\[
0 \to H^1(K, 0) \to H^1(G, V) \to 0 \to H^2(K, 0) \to H^2(G, V) . 
\]
This trivially gives $H^1(G, V) = 0$, and moreover because we have $(V^N)^{(G/N)} \cong V^G$, we get $H^0(G, V)=0$. 

In fact we do not need the restrictive condition that $H^1(N, V)=0$ in the situation we care about, as we want the case where there is a unique characteristic invariant under $N$, which is the case when $H^0(N, V) = V^N =0$ but also when the specific affine representation of $G$ as an element in $H^1(G, V)$ is the zero class when restricted to the action as an element in $H^1(N, V)$. This means that the class in $H^1(G, V)$ is in the kernel of the restriction map, and as we have said that the image of the inflation map is trivial, this means the class in $H^1(G, V)$ must be the zero class.  
Read together these tell us that if we have a normal subgroup given with a UIC, then the action of the whole group has a UIC. Moreover, this extends to if we have a subgroup which is subnormal in the original group (that is $H \leq G$ such that there exist $ H_i$ with $H \triangleleft H_1 \triangleleft \dots \triangleleft H_k \triangleleft G$). In summation this gives the following proposition. 

\begin{prop}\label{prop: SOC curves have UIC}
    If there exists $f \in G \leq \Aut(\mathcal{C})$ such that $f$ has odd order, $\pangle{f}$ is subnormal in $G$, and $g(\mathcal{C}/\pangle{f}) = 0$, then $\mathcal{C}$ has a unique theta characteristic invariant under the action of $G$. We will call a curve with this property Subnormal Odd Cyclic (SOC). 
\end{prop}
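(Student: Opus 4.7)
The plan is to establish the Unique Invariant Characteristic (UIC) property for $\pangle{f}$ first, then propagate it up the subnormal chain $\pangle{f} = H_0 \triangleleft H_1 \triangleleft \cdots \triangleleft H_k = G$ using the inflation-restriction sequence, mirroring the sketch in the paragraphs preceding the statement. The key observables for any subgroup $H \leq G$ are $V^H = H^0(H, V)$ and the class $[v]_H \in H^1(H, V)$ of the affine cocycle arising from the action on characteristics: by Propositions \ref{prop: invariant characteristic exists iff affine rep is equivalent to linear} and \ref{prop: number of invariant characteristics}, the subgroup $H$ has UIC precisely when $V^H = 0$ and $[v]_H = 0$.

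For the base case $N = \pangle{f}$, I would invoke Example \ref{ex: group cohomology cyclic group}. Since $f$ has odd order $n$ and $g(\mathcal{C}/\pangle{f}) = 0$, the matrix $A = \overline{\rho}_r(f)^T$ satisfies $\sum_{k=0}^{n-1} A^k = 0$, and the example exhibits an explicit inverse for $A-I$. Thus $V^N = \ker(A-I) = 0$, and since $\im(A-I) = V$,
\[
H^1(N, V) = \ker\Bigl(\sum_{k=0}^{n-1} A^k\Bigr)\bigg/\im(A-I) = 0,
\]
so in particular $[v]_N = 0$ and $N$ has UIC.

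For the inductive step, suppose $N = H_i \triangleleft H_{i+1}$ and $N$ has UIC, so $V^N = 0$ and $[v]_N = 0$. I would apply the inflation-restriction sequence
\[
0 \to H^1(H_{i+1}/N, V^N) \to H^1(H_{i+1}, V) \to H^1(N, V)^{H_{i+1}/N}.
\]
Because $V^N = 0$, the leftmost term vanishes, so inflation has trivial image. Meanwhile $[v]_{H_{i+1}}$ restricts to $[v]_N = 0$, placing $[v]_{H_{i+1}}$ in the kernel of restriction, which equals the image of inflation, hence is zero. Additionally, $V^{H_{i+1}} = (V^N)^{H_{i+1}/N} = 0$. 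So $H_{i+1}$ inherits UIC, and iterating up the chain delivers UIC for $G$.

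The technically delicate point, rather than a genuine obstacle, is recognising that one does not need the full vanishing $H^1(N, V) = 0$ at each stage of the induction --- only that the specific affine class restricts trivially. This is automatic from the inductive hypothesis that $N$ has an invariant characteristic, and it is precisely what keeps the induction going once the base case has been cleared by the invertibility of $A - I$ in the odd-order, genus-zero-quotient setting.
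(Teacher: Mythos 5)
Your proposal is correct and takes essentially the same approach as the paper: the base case is cleared exactly as in Example \ref{ex: group cohomology cyclic group} (odd order and genus-zero quotient force $A-I$ to be invertible mod 2, so $H^0(\langle f\rangle, V)$ and $H^1(\langle f\rangle, V)$ both vanish), and your inductive step is the paper's inflation--restriction argument, including its key refinement that one needs only the triviality of the restricted affine class rather than the full vanishing of $H^1(N,V)$ to climb the subnormal chain. There is nothing to correct.
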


This condition captures the existence of unique invariant characteristics in the case of even-order cyclic group actions seen earlier in Example \ref{ex: group cohomology cyclic group}. In that case, writing $n = 2^k m$ for some odd integer $m$, the necessary and sufficient conditions previously described are equivalent to the subnormal subgroup $C_m$ having quotient genus 0. In the case of SOC curves, we are also able to determine the parity of the invariant characteristic from the signature.
\begin{prop}\label{prop: parity of SOC UIC}
	Given a SOC curve when the subnormal cyclic group $\pangle{f}$ has odd order $n$ and signature $(0; c_1, \dots, c_r)$, the parity is 
	\[
	n \sum_{i=1}^r \frac{c_i - c_i^{-1}}{8} \mod 2.
	\]
\end{prop}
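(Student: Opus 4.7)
The plan is to proceed in two stages: first reduce to the case $G = \langle f \rangle$, then perform an explicit computation in a superelliptic model of the cyclic cover.

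For the reduction, the inflation--restriction argument used to prove Proposition \ref{prop: SOC curves have UIC} actually shows something stronger: whenever $N \trianglelefteq G$ has a unique $N$-invariant characteristic $L_0$, the condition $V^N = 0$ forces $H^1(G/N, V^N) = 0$, so the restriction map $H^1(G, V) \to H^1(N, V)$ is injective; since our $G$-cocycle restricts to a coboundary on $N$, it is itself a coboundary, giving a $G$-invariant characteristic which is a fortiori $N$-invariant and hence equal to $L_0$. Iterating along the subnormal chain from $\langle f \rangle$ up to $G$ shows the UIC of $G$ coincides with the UIC of $\langle f \rangle$, so its parity depends only on the cyclic action of $\langle f \rangle$ with signature $(0; c_1, \dots, c_r)$.

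For the cyclic case I would realise $\mathcal{C}$ as the smooth projective model of $y^n = \prod_{i=1}^r (x - a_i)^{s_i}$ with distinct $a_i$ and integers $s_i$ chosen so that $\gcd(s_i, n) = n/c_i$ and $\sum s_i \equiv 0 \pmod n$, so that $\sigma(x,y) = (x, \zeta_n y)$ realises the given signature. Writing $E_i$ for the reduced $\sigma$-orbit above $a_i$ and $F = \pi^{-1}(\infty)$, we have $K_\mathcal{C} = -2F + \sum_i (c_i - 1) E_i$, and since each $c_i$ is odd,
\[
L = -F + \sum_{i=1}^r \tfrac{c_i - 1}{2}\, E_i
\]
is a $\sigma$-invariant divisor class with $2L \sim K_\mathcal{C}$; by uniqueness it must be the UIC. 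The required parity is then $h^0(\mathcal{C}, L) \bmod 2$, which I would compute by decomposing $H^0(\mathcal{C}, L)$ into $\sigma$-eigenspaces: the $k$-th eigenspace identifies with polynomials $h(x)$ for which $h(x) y^{-k}$ is a section of $L$, a condition that reduces to explicit degree bounds involving the fractional parts $\{k s_i / n\}$.

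The main obstacle is the final simplification. The raw dimension $h^0(\mathcal{C}, L)$ depends on the auxiliary exponents $s_i$, but its parity is a topological invariant (Atiyah--Mumford) and hence depends only on $n$ and the $c_i$'s. A Dedekind-sum identity --- or, equivalently, recognition of the count as an Arf-type invariant of the quadratic form of Lemma \ref{lemma: parity of spin structure} --- should absorb the $s_i$-dependence into an even correction term, collapsing the branch contributions to $n(c_i - c_i^{-1})/8$ per ramification point and giving the stated closed form.
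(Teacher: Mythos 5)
Your first stage is correct and coincides with the paper's (implicit) reduction: the inflation--restriction argument of Proposition \ref{prop: SOC curves have UIC} does show the $G$-UIC equals the $\pangle{f}$-UIC, so the parity question is purely about the odd cyclic action. Your candidate characteristic is also the right object: with $\infty$ unramified, $K_\mathcal{C} = -2F + \sum_i (c_i-1)E_i$ on the nose, so $L = -F + \sum_i \frac{c_i-1}{2}E_i = \pi^\ast\mathcal{O}_{\mathbb{P}^1}(-1) \otimes \mathcal{O}_\mathcal{C}(R/2)$ is an invariant characteristic and by uniqueness is the UIC; this is precisely the characteristic Serre attaches to a covering with odd ramification.

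The gap is in your final step, which is where the entire content of the proposition lives. First, the appeal to Atiyah--Mumford to dismiss the $s_i$-dependence in advance is a non sequitur: parity is only locally constant on the moduli space of spin curves, so it is constant in the connected family obtained by moving the branch points with the exponent vector $(s_1,\dots,s_r)$ fixed, but covers with different $s_i$ (different generating vectors) lie in different such families, and nothing in your argument places the resulting spin curves $(\mathcal{C}, L)$ in the same component of the spin moduli space. That the parity depends only on the stabiliser orders $c_i$ and not on the $s_i$ is exactly what must be proved, so invoking it to ``absorb the $s_i$-dependence'' is circular; and the Dedekind-sum identity that would do the work honestly is asserted (``should''), not established. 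The paper sidesteps all of this by quoting Serre's result \cite[Equation 16]{Serre1990}, which gives the parity of precisely this characteristic as $\sum_{P \in \mathcal{C}} \bigl(\abs{\operatorname{Stab}(P)}^2 - 1\bigr)/8 \bmod 2$ (the $c_i$ are odd since they divide $n$, so each term is an integer); grouping the $n/c_i$ points in the fibre over each branch point yields $(n/c_i)(c_i^2-1)/8 = n(c_i - c_i^{-1})/8$, which is the stated formula. To make your route self-contained you would have to carry out the eigenspace count --- degree bounds governed by $\lfloor k s_i/n \rfloor$ --- and prove the mod-2 collapse directly, which amounts to reproving Serre's theorem in the genus-zero-quotient case rather than citing it.
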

\begin{proof}
Note that the $c_i$ must be odd because they divide $n$. Hence, using \cite[Equation 16]{Serre1990} we have that the parity is
\begin{align*}
    \sum_{P \in \mathcal{C}} \frac{\abs{\operatorname{Stab}(P)}^2-1}{8} &= \sum_{Q \in \mathcal{C}/G} \sum_{\substack{{P \in \mathcal{C}} \\ {\pi(P) = Q}}} \frac{\abs{\operatorname{Stab}(P)}^2-1}{8} \mod 2, \\
    &= n  \sum_{\pi(P) \in \mathcal{C}/G}  \frac{\abs{\operatorname{Stab}(P)}-\abs{\operatorname{Stab}(P)}^{-1}}{8} \mod 2, \\
    &= n \sum_{i=1}^r \frac{c_i - c_i^{-1}}{8} \mod 2. 
\end{align*}
\end{proof}

\section{Computation of Orbit Tables}\label{sec: computations of orbit tables}

In Appendix \ref{sec: tables of orbits} we shall provide tables of orbit decompositions for many different curves of genera $\leq 9$, giving those for all possible curves of genus 2, 3, and 4. In this section we shall make some comments on the computation of these tables and their results. 

The tables are computed using the representation in terms of binary vectors via spin structures as in \cite{Kallel2010}· We restrict to curves for which we can represent the curve in plane form as $f(x,y) = 0$, whereby we may use the method of \cite{Bruin2019} implemented in Sage to compute the rational representation. We will also want to compute the signature of the action where possible, defined as follows. Throughout signatures are computed with the help of the LMFDB \cite{LMFDB}, and in cases of ambiguity were verified by comparing the character of the rational representation found using this signature and the Eichler trace formula \cite[p.~41]{Breuer2000} to that found by computing directly with Sage, or by determining the signature from the rational representation as in \cite{Ries1993}.

In doing the orbit decomposition computations, we are aided by the fact that we need only choose one representative curve from each class because the decomposition is uniquely determined by the rational representation and two curves in the interior of an equisymmetric family (in the sense of \cite{Broughton1990}) have equivalent rational representations \cite[p.~896]{ReyesCarocca2022}. One can understand this intuitively: if the coefficients of the curve are varied only slightly (and generically) then as the rational representation is given in terms of integer valued matrices these would not be expected to change.

It is shown in \cite{Kallel2010, Biswas2007} that the hyperelliptic involution (if it exists) is the only nonidentity automorphism $\iota$ for which $\rho_r(\iota) = I \mod 2$, and so is the only nonidentity automorphism of a curve that fixes every theta characteristic, hence we know that the \bam{reduced automorphism group} \cite{Rauch1970, Popp1972}
\[
\Autb(\mathcal{C}) := \left \lbrace \begin{array}{cc}
    \Aut(\mathcal{C})/\pangle{\iota}, & \mathcal{C} \text{ is hyperelliptic}, \\
    \Aut(\mathcal{C}), & \text{otherwise},
\end{array} \right.
\]
acts faithfully on the theta characteristics. There is no a priori reason to expect $\Autb(\mathcal{C})$ to have an action on $\mathcal{C}$ when $\mathcal{C}$ is hyperelliptic \cite{Rauch1970}. 

As such we shall give tables of curves of a given genus as a plane curve $f(x,y)=0$, their reduced automorphism group (with the GAP group ID if the presentation given of the group is not specific \cite{GAP4}), the quotient genus $g_0$ and signature $\bm{c}$ of the $\Autb$ action on the curve (recall Definition \ref{def: signature and ramification type}) when it exists and is known, the $\Autb$ orbits of the odd and even characteristics respectively presented as a list of values $a_b$ indicating $b$ orbits of size $a$, and the total number of characteristics $I$ invariant under the group action. When giving $f$ we shall leave in free parameters where possible, not specifying values that must be avoided. The code to recreate this data (except for the signature) is given in the Sage notebooks \path{list_of_plane_curves.ipynb} and \path{theta_characteristic_orbit.ipynb}. Table \ref{tab: orbit decomposition, genus 2} shows the table constructed in the case of genus-2 curves. 

\begin{remark}
    We will use the convention that the dihedral group $D_n$ be of size $2n$.
\end{remark}

\begin{longtable}
{p{0.37\linewidth}|p{0.16\linewidth}|p{0.16\linewidth}|p{0.16\linewidth}|p{0.04\linewidth}}
\caption{Orbit decomposition, all genus-2 curves} \\
    $f$ & $\Autb$, $\bm{c}$ & Odd & Even & I\\ \hline \hline 
    $y^2 - (x^2-1)(x^2-a)(x^2-b)$ & $C_2$, $(1; 2^2)$ &  $2_3$ & $1_4, 2_3$ & 4 \\
    $y^2 - (x^2-1)(x^2-a)(x^2-a^{-1})$ & $V_4$, $(0; 2^5)$ &  $2_1, 4_1$ & $1_2, 2_2, 4_1$ & 2 \\
    $y^2 - (x^5 - 1)$ & $C_5$, $(0; 5^3)$ & $1_1, 5_1$ & $5_2$ & 1 \\
    $y^2 - (x^6 - ax^3 + 1)$ & $S_3$, $(0; 2^2, 3^2)$ & $6_1$ & $1_1, 3_3$ & 1 \\
    $y^2 - (x^6 - 1)$ & $D_{6}$, $(0; 2^3, 3)$ & $6_1$ & $1_1, 3_1, 6_1$ & 1 \\
    $y^2 - x(x^4 - 1)$ & $S_4$ & $6_1$ & $4_1, 6_1$ & 0 
\label{tab: orbit decomposition, genus 2}
\end{longtable}

\begin{remark}
    The code provided may easily be adapted to compute the orbit decomposition for all subgroups of the automorphism group. 
\end{remark}

Every genus-2 curve with a unique invariant characteristic satisfied the conditions of Proposition \ref{prop: SOC curves have UIC}, that is it is SOC. 

We shall now make some remarks about Tables \ref{tab: orbit decomposition, genus 3}, \ref{tab: orbit decomposition, genus 4}, \ref{tab: orbit decomposition, genus 5}, \ref{tab: orbit decomposition, genus 6}, \ref{tab: orbit decomposition, genus 7}, \ref{tab: orbit decomposition, genus 8}, and \ref{tab: orbit decomposition, genus 9}.
\begin{itemize}
    \item At genus 3, not every curve with a unique invariant characteristic is SOC. There is a single exception, Klein's curve, whose automorphism group $\PSL_3(\mathbb{F}_2)$ is simple and so cannot have a nontrivial subnormal cyclic group. There is a $C_7$ subgroup of the automorphism group quotienting to $\mathbb{P}^1$ (as clearly seen by writing Klein's curve in Lefschetz form \cite{Lefschetz1921, Lefschetz1921a, Zomorrodian2010}), but it is not subnormal. 
    \item The non-hyperelliptic genus-3 curve with $\Autb = S_4$ is the first example of a curve with large automorphism group without $I \leq 1$. 
    \item At genus 4 one sees the first example of curves of the same genus with the same pair $(\Autb, \bm{c})$, but different orbit decompositions. This highlights the fact that the signature does not fully the determine the rational representation, one must also pick a generating vector \cite[Definition 2.2]{Broughton1991}. The converse, that the (character of the) rational representation determines the signature, is true \cite{Ries1993}. 
    \item At genus 4 not all curves with a unique invariant characteristic are SOC. The exceptions are 
    \begin{enumerate}
        \item the curve with $(\Autb, \bm{c}) = (A_4, (0; 2, 3^3))$,
        \item the curve with $(\Autb, \bm{c}) = (S_5, (0; 2, 4, 5))$, Bring's curve.
    \end{enumerate}
    More about the orbit decomposition on Bring's curve is said in \cite{DisneyHogg2022b}; here we only note that similarly to Klein's curve there is an odd order cyclic group quotienting to $\mathbb{P}^1$ (here a $C_5$) that is not subnormal. On the $A_4$ curve, we note that the quotient of the curve by the $C_3$ action has genus 1, and hence presents the first case where the existence of a UIC is not clearly governed by an odd cyclic quotient to $\mathbb{P}^1$. 
    \item At genus 5 the Wiman octic is not SOC; there is a unique characteristic invariant under the $((C_4 \times C_2) \rtimes C_4) \rtimes C_3$ normal subgroup, but not under any subnormal cyclic group. Moreover, again we find that the quotient by $C_3$ has genus 1, and so the Wiman octic presents the second case where the existence of a UIC is not clearly governed by an odd cyclic quotient to $\mathbb{P}^1$. The curve with automorphism group $C_3 \times D_5$ is clearly SOC. 
    \item At genus 6 every curve written down with a UIC is SOC.
    \item Edge describes the orbits of some of the odd characteristics of the genus-7 Fricke-Macbeath curve in terms of tangent hyperplanes \cite{Edge1967}. 
    \item All the genus-7 curves written with a unique invariant characteristic are SOC.
    \item At genus 8 the two curves which have a UIC are SOC. 
    \item Not all the genus-9 curves written here with a UIC are SOC; the hyperelliptic curve with $\Autb \cong A_5$ cannot have a subnormal cyclic group. 
\end{itemize}

At genera greater than 8 the computations were becoming prohibitively slow, with the calculation of the symplectic automorphism group of the Fricke octavic curve in Sage taking just under three hours on a Intel Core i5-8350U CPU at 1.70GHz. Leaving behind the criteria of requiring a plane model of the curve, one can compute additional examples of theta characteristic decompositions using the code from \cite{Behn2013}, available at \url{https://github.com/rojas-ani/sage-routines},\footnote{LDH is grateful to Anita Rojas for her correspondence on the workings of this code.} from the data of a group, signature, and choice of generating vector provided the quotient genus is 0. The Sage notebook \path{genus_order_invariants_data.ipynb} shows how this can be done.

Having computed now many examples in low genera, we can pick out some families of curves with unique invariant characteristics, giving us the following theorem. 
\begin{theorem}\label{thm: infinitely many UIC curves}
    There are infinitely many curves, both non-hyperelliptic and hyperelliptic, with a unique invariant characteristic. 
\end{theorem}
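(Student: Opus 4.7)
The plan is to exhibit two explicit one-parameter infinite families of smooth curves, one hyperelliptic and one non-hyperelliptic, each satisfying the SOC hypothesis of Proposition~\ref{prop: SOC curves have UIC} applied with $G = \Aut(\mathcal{C})$, so that the proposition immediately delivers a unique invariant characteristic for every member of each family.

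For the hyperelliptic family I would take $\mathcal{C}_g : y^2 = x^{2g+1} - 1$ for each integer $g \geq 2$; since $x^{2g+1} - 1$ has distinct roots this is smooth and hyperelliptic of genus $g$, with branch locus the $(2g{+}1)$-st roots of unity together with $\infty$. The automorphism $\sigma : (x, y) \mapsto (\zeta_{2g+1} x, y)$ has odd order $n = 2g+1$, and the quotient $\mathcal{C}_g / \pangle{\sigma}$ has function field $\mathbb{C}(y)$ (using $x^n = y^2 + 1$), so has genus zero. Inspecting the stabilizer in $\PGL_2(\mathbb{C})$ of the branch locus of $n+1$ points shows that $\Autb(\mathcal{C}_g) = C_n$, whence $\Aut(\mathcal{C}_g)$ is the cyclic group $C_{2n}$ and $\pangle{\sigma}$ is automatically normal, hence subnormal.

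For the non-hyperelliptic family I would take the Fermat curves $F_n : x^n + y^n + z^n = 0$ for odd $n \geq 5$; these are smooth plane curves of degree $\geq 4$, hence non-hyperelliptic, of genus $(n-1)(n-2)/2$. Taking the diagonal automorphism $\sigma : (x : y : z) \mapsto (\zeta_n x : y : z)$ of odd order $n$, the quotient $F_n / \pangle{\sigma}$ has rational function field $\mathbb{C}(y/z)$, so has genus zero. The classical structure theorem gives $\Aut(F_n) \cong T \rtimes S_3$ with $T \cong C_n \times C_n$ the abelian group of diagonal automorphisms, so the chain $\pangle{\sigma} \triangleleft T \triangleleft \Aut(F_n)$ supplies the required subnormality.

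The main obstacle is the subnormality verification at each end, since Proposition~\ref{prop: SOC curves have UIC} requires $\pangle{\sigma}$ to be subnormal in $\Aut(\mathcal{C})$ itself, not merely in some convenient cyclic supergroup. For the hyperelliptic family this reduces to determining $\Autb$ from the branch locus, which is elementary but requires care to rule out sporadic extra automorphisms for small $g$; for the Fermat family it rests on invoking the classical description of $\Aut(F_n)$. With both families verified, the theorem follows.
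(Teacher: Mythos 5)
Your proof is correct, and for the hyperelliptic half it is essentially identical to the paper's: both take the Wiman curves $y^2 = x^{2g+1}-1$ and apply Proposition~\ref{prop: SOC curves have UIC} to the odd-order rotation subgroup, which is normal since the full automorphism group is abelian (the paper writes it as $C_{2g+1} \times C_2$, which agrees with your $C_{2(2g+1)}$ as $2g+1$ is odd). For the non-hyperelliptic half, however, you take a genuinely different route. The paper uses the Lefschetz curves $x^m y^n + y^m + x^n = 0$ with $p := m^2 - mn + n^2 > 7$ a prime congruent to $1 \bmod 3$; their automorphism group is $C_p \rtimes C_3$, so $C_p$ is an honest normal subgroup, but verifying the genus-$0$ quotient requires rewriting the curve in the form $\tilde{y}^p + \tilde{x}^a(1+\tilde{x}) = 0$, and infinitude of the family rests on the arithmetic of primes $\equiv 1 \bmod 3$. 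Your Fermat family $x^n + y^n + z^n = 0$ for odd $n \geq 5$ avoids the arithmetic condition and gives a transparent genus-$0$ quotient computation (invariant field $\mathbb{C}(y/z)$), at the cost of invoking the classical theorem $\Aut(F_n) \cong (C_n \times C_n) \rtimes S_3$ for $n \geq 4$; notably, your chain $\langle \sigma \rangle \triangleleft C_n \times C_n \triangleleft \Aut(F_n)$ is a genuine two-step subnormal chain in which $\langle \sigma \rangle$ is \emph{not} normal in the full group (coordinate permutations conjugate it away), so your argument actually exercises the full ``subnormal'' strength of the proposition, which the paper's examples do not. Your $n=5$ case is independently corroborated by the paper's own genus-$6$ table, where the Fermat quintic with automorphism group $(150,5)$ has $I=1$. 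The one point stated rather than fully proved on your side --- that $\Autb$ of the Wiman curve is exactly $C_{2g+1}$, i.e.\ that no M\"obius transformation moving $\infty$ preserves the branch set $\mu_{2g+1} \cup \{\infty\}$ --- is a known fact which you correctly flag and which the paper likewise asserts without proof, so it is not a gap.
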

\begin{proof}
    It is sufficient to consider only curves of Lefschetz type, in particular the two families we shall consider are one of the Wiman hyperelliptic curves $y^2 = x^{2 g+1} - 1$ and the Lefschetz curves of the form $x^m y^n + y^m + x^n = 0$ for coprime $m, n$ where $p := m^2 - mn + n^2 > 7$ is a prime congruent to 1 mod 3. 

    The former has automorphism group $C_{2 g+1} \times C_2$, which contains the normal subgroup $C_{2 g+1}$ of odd order which quotients to $\mathbb{P}^1$. 
    The latter has automorphism group of the form $C_p \rtimes C_3$, which contains the normal subgroup $C_p$ of odd order which quotients to $\mathbb{P}^1$, which is easiest seen by writing the curve in the from $\tilde{y}^p + \tilde{x}^a(1+\tilde{x})=0$ as can always be done for some $a$ \cite[p.~464]{Lefschetz1921a}.
\end{proof}

\begin{remark}
For the hyperelliptic family in the proof of Theorem \ref{thm: infinitely many UIC curves} we can say more in the case that $2 g+1$ is a prime $p$, as all characteristics that are not invariant are in orbits of order $p$. 

Using Riemann-Hurwitz we know that the $C_p$ subgroup acts with signature $(0; p^r)$ where 
\[
2\times\frac{p-1}{2} - 2 = p\psquare{-2 + r\pround{1 - \frac{1}{p}}} \Rightarrow r=3. 
\]
Using Proposition \ref{prop: parity of SOC UIC} we know the parity of the UIC is $3p(p - p^{-1})/8$, and hence is determined entirely by whether $p = \pm1 $ or $p = \pm 3$ mod 8.

\end{remark}
\section{Hurwitz Curves and the Method of Dolgachev}\label{sec: Hurwitz curves and Dolgachev}

\subsection{Machine Learning Predictions}

One use of the code for computing orbit decomposition united with the method of \cite{Behn2013} for computing rational representation from signatures is that we can investigate the application of machine learning techniques in illuminating the behaviour of orbits. Machine classification (namely a pipeline using a Standard Scaler followed by a Random Forest classifier\footnote{LDH is very grateful to Jacob Bradley for showing him how to do this.}) using features built from the group data and the signature alone achieved an accuracy of approximately 93\% in cross-validation when predicting if the corresponding group action gave a unique invariant characteristic when trained on data of over 1000 group actions, a far higher accuracy than the approximately $54\%$ one would expect if choosing randomly with prior knowledge of the frequency of actions with a UIC in the dataset. Specifically, we provided the features of genus, group order, whether the group action was large (in the sense $\abs{G} > 4(g-1)$), the maximum power of 2 dividing the group order, the number of involutions in the group, the number of involutions up to conjugacy, the number of entries in the signature, the number of even entries in the signature, the maximum entry of the signature, and the dimension of the corresponding family for 1326 group actions in genus $12$ or less. This suggests that from very basic heuristics alone one should be able to get strong results understanding the behaviour of UICs. Moreover, estimating feature importance using these methods showed that whether or not a group action was large for a given genus was unimportant in determining whether a given action led to a UIC. 

In order to lay a benchmark to guide future research we computed in Table \ref{tab: invariants prediction} the prediction of the classifier given the corresponding data for all the simple Hurwitz group with order $<10^6$, provided in \cite{Conder1987} (the $J_i$ are the first two Janko groups). These results are correct for the two Hurwitz curves known, though removing their data from the training set makes the classifier less accurate. Running a similar pipeline which instead predicts whether a group action has 0, 1, or many invariants characteristics confirms the predictions of Table \ref{tab: invariants prediction}, predicting those without a unique invariant characteristic have none. 

\begin{longtable}
{p{0.16\linewidth}|p{0.16\linewidth}|p{0.16\linewidth}}
\caption{Machine prediction of whether $I=1$, all simple Hurwitz groups of order $<10^6$} \\
    $G$ & $g$ & $I=1$ \\ \hline \hline 
    $\PSL_2(7)$ & 3 & True \\
    $\PSL_2( 8)$ & 7 & False \\
    $\PSL_2( 13)$ & 14 & True \\
    $\PSL_2( 27)$ & 118 & True \\
    $\PSL_2( 29)$ & 146 & True \\
    $\PSL_2( 41)$ & 411 & False \\
    $\PSL_2( 43)$ & 474 & True \\
    $J_1$ & 2091 & False \\
    $\PSL_2( 71)$ & 2131 & False \\
    $\PSL_2( 83)$ & 3404 & True \\
    $\PSL_2( 97)$ & 5433 & False \\
    $J_2$ & 7201 & False \\
    $\PSL_2( 113)$ & 8589 & False \\
    $\PSL_2( 125)$ & 11626 & True
\label{tab: invariants prediction}
\end{longtable}

In \S\ref{sec: method of Dol}-\ref{sec: sporadic examples} we will now show how to compute number of invariant characteristics for the curves in Table \ref{tab: invariants prediction} analytically using a new method, and in \S\ref{sec: comparison to prediction}, Table \ref{tab: invariants prediction answers}, we shall compare the answers. 

\subsection{The Method of Dolgachev}\label{sec: method of Dol}

Approximately four months after constructing these machine estimates we were alerted by Vanya Cheltsov to the paper \cite{Dolgachev1997a}, which allows one to get analytic solutions to the questions raised in Table \ref{tab: invariants prediction}, which we shall describe now. 

Denote with $\Pic(\mathcal{C})^G$ the group of $G$-invariant isomorphism classes of line bundles on $\mathcal{C}$, that is line bundles $L \to \mathcal{C}$ that admit a collection of isomorphisms $\phi_g : g^\ast L \to L$ for each $g \in G$. Moreover, let $\Pic(G; \mathcal{C})$ denote the group of $G$-linearised line bundles on $\mathcal{C}$ and corresponding linearisations, that is $G$-invariant line bundles that admit a collection of isomorphisms such that 
\[
\forall g, h \in G, \quad \phi_{g \circ h} = \phi_h \circ h^\ast \phi_g.
\]
Thinking in terms of divisors $\Div(\mathcal{C})$, principal divisors $\PDiv(\mathcal{C})$, and nonzero meromorphic functions $\mathcal{M}(\mathcal{C})^\times$ instead of line bundles, $\Pic(\mathcal{C})^G$ corresponds to $(\Div(\mathcal{C})/\PDiv(\mathcal{C}))^G$, while $\Pic(G; \mathcal{C})$ corresponds to $\Div(\mathcal{C})^G/ [\mathcal{M}(\mathcal{C})^\times]^G$ \cite[Corollary 2.3]{Dolgachev1997a}. The key results of Dolgachev are the following. 
\begin{prop}[\cite{Dolgachev1997a}, Proposition 2.2]
    We have a short exact sequence of abelian groups
    \begin{equation}\label{eq: Dol ES}
0 \to \Hom(G, \mathbb{C}^\times) \to \Pic(G; \mathcal{C}) \to \Pic(\mathcal{C})^G \to H^2(G, \mathbb{C}^\times) \to 0.
\end{equation}
Here $H^2(G, \mathbb{C}^\times)$ is the group cohomology viewing $\mathbb{C}^\times$ as a trivial $G$-module, which is known as the Schur multiplier of the group $G$. 
\end{prop}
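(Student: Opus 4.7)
The natural map to consider is the forgetful map
\[
F : \Pic(G; \mathcal{C}) \to \Pic(\mathcal{C})^G
\]
sending a $G$-linearised bundle $(L, \{\phi_g\})$ to its isomorphism class $[L]$, which is automatically $G$-invariant. First I would identify $\ker F$: an element there is a linearisation on the trivial bundle $\mathcal{O}_\mathcal{C}$, for which $\Aut(\mathcal{O}_\mathcal{C}) = \mathbb{C}^\times$ since $\mathcal{C}$ is compact and connected, and the $G$-action on constants is trivial. The linearisation condition $\phi_{gh} = \phi_h \circ h^\ast \phi_g$ then collapses to the multiplicativity condition $\phi_{gh} = \phi_g \phi_h$, so $\ker F \cong \Hom(G, \mathbb{C}^\times) = H^1(G, \mathbb{C}^\times)$.

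Next I would construct the boundary $\delta : \Pic(\mathcal{C})^G \to H^2(G, \mathbb{C}^\times)$ directly. Given $[L] \in \Pic(\mathcal{C})^G$, choose any collection of isomorphisms $\phi_g : g^\ast L \to L$ (possible by $G$-invariance) and define $c(g,h) \in \Aut(L) = \mathbb{C}^\times$ as the unique scalar with $\phi_{gh} = c(g,h) \cdot \phi_h \circ h^\ast \phi_g$. A routine calculation using functoriality of the pullback shows that $c$ satisfies the 2-cocycle identity, while modifying the $\phi_g$ by scalars changes $c$ by a coboundary, so $\delta[L] := [c]$ is a well-defined class in $H^2(G, \mathbb{C}^\times)$. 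Exactness in the middle is then immediate: $[L]$ lies in the image of $F$ exactly when the $\phi_g$ can be chosen to satisfy the linearisation condition on the nose, i.e.\ when $[c] = 0$.

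Surjectivity of $\delta$ is the crux, and I would establish it via the divisor-theoretic descriptions already recalled. Applying $G$-cohomology to
\[
0 \to \PDiv(\mathcal{C}) \to \Div(\mathcal{C}) \to \Pic(\mathcal{C}) \to 0,
\]
the module $\Div(\mathcal{C})$ splits as $\bigoplus_P \operatorname{Ind}_{G_P}^G \mathbb{Z}$ indexed by $G$-orbits, with $G_P$ the (finite) stabiliser of a representative $P$. Shapiro's lemma together with $H^1(G_P, \mathbb{Z}) = 0$ gives $H^1(G, \Div(\mathcal{C})) = 0$, so the connecting map $\Pic(\mathcal{C})^G \to H^1(G, \PDiv(\mathcal{C}))$ is surjective. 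From the short exact sequence $1 \to \mathbb{C}^\times \to \mathcal{M}(\mathcal{C})^\times \to \PDiv(\mathcal{C}) \to 0$, Hilbert 90 applied to the Galois extension $\mathcal{M}(\mathcal{C})/\mathcal{M}(\mathcal{C}/G)$ gives $H^1(G, \mathcal{M}(\mathcal{C})^\times) = 0$, and Tsen's theorem for the $C_1$-field $\mathcal{M}(\mathcal{C}/G)$ gives $H^2(G, \mathcal{M}(\mathcal{C})^\times) \hookrightarrow \operatorname{Br}(\mathcal{M}(\mathcal{C}/G)) = 0$. Hence $H^1(G, \PDiv(\mathcal{C})) \cong H^2(G, \mathbb{C}^\times)$, and composing with the surjection above yields surjectivity of $\delta$, once the two descriptions of the boundary map are identified.

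The main obstacle is precisely this last step: the kernel identification and cocycle construction are essentially formal, but terminating the sequence on the right genuinely relies on both the permutation-module structure of $\Div(\mathcal{C})$ (to kill $H^1(G, \Div)$) and on Tsen's theorem for the function field (to kill $H^2(G, \mathcal{M}^\times)$), together with a compatibility check between the intrinsic cocycle $c$ and the composed connecting homomorphisms arising from the two short exact sequences above.
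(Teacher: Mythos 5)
Your proof is correct, and its skeleton coincides with the paper's at the crucial point: both arguments ultimately rest on splicing the long exact sequences of $0 \to \PDiv(\mathcal{C}) \to \Div(\mathcal{C}) \to \Pic(\mathcal{C}) \to 0$ and $1 \to \mathbb{C}^\times \to \mathcal{M}(\mathcal{C})^\times \to \PDiv(\mathcal{C}) \to 0$, using $H^1(G, \Div(\mathcal{C})) = 0$ and $H^1(G, \PDiv(\mathcal{C})) \cong H^2(G, \mathbb{C}^\times)$. The difference is in packaging. The paper imports both cohomological facts wholesale from \cite[Lemma 4.1]{Fried1989} and then works \emph{entirely} on the divisor side: having already recalled Dolgachev's identifications $\Pic(G;\mathcal{C}) \cong \Div(\mathcal{C})^G/[\mathcal{M}(\mathcal{C})^\times]^G$ and $\Pic(\mathcal{C})^G \cong (\Div(\mathcal{C})/\PDiv(\mathcal{C}))^G$, it obtains the four-term sequence by quotienting, so all maps in the sequence are by construction the connecting homomorphisms and no compatibility question arises. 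You instead build the two ends intrinsically --- the forgetful map $F$ with kernel $\Hom(G,\mathbb{C}^\times)$ via automorphisms of $\mathcal{O}_\mathcal{C}$, and the obstruction cocycle $c(g,h)$ measuring the failure of a chosen family $\{\phi_g\}$ to be a linearisation --- and prove the vanishing lemmas from scratch (Shapiro's lemma on the permutation module $\Div(\mathcal{C})$, Hilbert 90 for $\mathcal{M}(\mathcal{C})/\mathcal{M}(\mathcal{C}/G)$, and Tsen's theorem for the $C_1$-field $\mathcal{M}(\mathcal{C}/G)$). Your route is more self-contained and makes the meaning of the boundary map transparent, which the paper's citation-based proof does not; the paper's route is shorter and, by staying in divisor language throughout, never needs to reconcile two descriptions of the same map.

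Two loose ends in your write-up, both routine but worth stating: you assert but do not perform the compatibility check identifying your intrinsic $\delta$ with the composed connecting homomorphism $\Pic(\mathcal{C})^G \to H^1(G, \PDiv(\mathcal{C})) \cong H^2(G, \mathbb{C}^\times)$ --- concretely, writing $g\cdot D - D = \operatorname{div}(f_g)$ and taking $\phi_g$ to be multiplication by $f_g$, one checks the constant $c(g,h) = f_{gh}^{-1}\, f_h\, (f_g \circ h)$ is exactly the image cocycle; alternatively, you can bypass $\delta$ entirely by noting that middle exactness with respect to the composed map is precisely the statement (Dolgachev's Corollary 2.3, already quoted in the paper) that a class is linearisable iff it contains a $G$-invariant divisor. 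You should also note that $\delta$ is a homomorphism (cocycles add under tensor product of bundles), and that your Shapiro argument uses finiteness of $G$ both for induced $=$ coinduced and for commuting $H^1$ with the infinite direct sum over orbits.
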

\begin{proof}
	Dolgachev proves this using the spectral sequence of \cite[\S5.2]{Grothendieck1957}, but we shall give a more elementary approach here following \cite{Fried1989}. 
	
	Observe that the Short Exact Sequence (SES)
	\[
	0 \to \PDiv(\mathcal{C}) \to \Div(\mathcal{C}) \to \Pic(\mathcal{C}) \to 0, 
	\]
	gives rise to the Long Exact Sequence (LES)
	\[
	0 \to \PDiv(\mathcal{C})^G \to \Div(\mathcal{C})^G \to \Pic(\mathcal{C})^G \to H^1(G, \PDiv(\mathcal{C})) \to H^1(G, \Div(\mathcal{C})).
	\]
	We can use \cite[Lemma 4.1]{Fried1989} to say
	\begin{itemize}
		\item $H^1(G, \Div(\mathcal{C}))=0$, and 
		\item $H^1(G, \PDiv(\mathcal{C})) \cong H^2(G, \mathbb{C}^\times)$.
	\end{itemize}
	Moreover, from the SES
	\[
	0 \to \mathbb{C}^\times \to \mathcal{M}(\mathcal{C})^\times \to \PDiv(\mathcal{C}) \to 0,
	\]
	where $\mathcal{M}(\mathcal{C})$ are the meromorphic functions on the curve, we get the portion of the LES
	\begin{align*}
		0 &\to \mathbb{C}^\times \to [\mathcal{M}(\mathcal{C})^\times]^G \to \PDiv(\mathcal{C})^G \to \Hom(G, \mathbb{C}^\times) \to 0,
	\end{align*}
	where we have used \cite[Exercise 6.1.6]{Weibel1995} to equate $H^1(G, \mathbb{C}^\times) = \Hom(G, \mathbb{C}^\times)$. As such, quotienting by $[\mathcal{M}(\mathcal{C})^\times]^G$ we are done.
\end{proof}

\begin{remark}
    The map $\Pic(G; \mathcal{C}) \to \Pic(\mathcal{C})^G$ sends any $G$-linearised line bundle $L$ and its choice of linearisation to the isomorphism class of $L$. In this way, we think of $\Hom(G, \mathbb{C}^\times)$ as parametrising the possible $G$-linearisations of the trivial bundle $\mathcal{O}_\mathcal{C}$. 
\end{remark}

\begin{prop}[\cite{Dolgachev1997a}]\label{prop: structure theorem linearised pic}
    Suppose that $G$ acts on $\mathcal{C}$ with signature $(0; c_1, \dots, c_r)$. Then we have 
    \begin{equation}\label{eq: dolgachev SES}
    \Pic(G; \mathcal{C}) \cong \mathbb{Z} \oplus \psquare{\bigoplus_{i=1}^{r-1}\mathbb{Z}/(d_i/d_{i-1}) \mathbb{Z}},
    \end{equation}
    where 
\[
d_i = \gcd\pbrace{\prod_{j \in S} c_j \, | \, S \subset \pbrace{1, \dots, r}, \abs{S}=i}, \quad \text{taking} \quad d_0=1. 
\]
The generator of the $\mathbb{Z}$ factor $\gamma$ has underlying $G$-invariant line bundle (isomorphism class) $\Gamma$ determined by 
\[
K_\mathcal{C} = N\pround{r - 2 - \sum_{i=1}^r \frac{1}{c_i}} \Gamma , 
\]
where $N = \lcm(c_i)$.
\end{prop}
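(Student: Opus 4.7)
The plan is to work in the divisor picture afforded by the identification $\Pic(G;\mathcal{C}) \cong \Div(\mathcal{C})^G / [\mathcal{M}(\mathcal{C})^\times]^G$ already established in the excerpt. Since $g_0=0$, the quotient $\pi:\mathcal{C}\to\mathbb{P}^1$ has branch divisor $Q_1+\dots+Q_r$ with ramification indices $c_i$. First I would describe $\Div(\mathcal{C})^G$ as freely generated by the orbit classes $R_i := \sum_{P\in\pi^{-1}(Q_i)} P$ together with the generic fiber classes $\pi^{\ast}(Q)$ for $Q$ unramified. Then I would identify the $G$-invariant principal divisors with pullbacks of principal divisors from $\mathbb{P}^1$: this collapses all unramified fiber classes into a single class $F$ and imposes the defining relations $\pi^{\ast}(Q_i) = c_iR_i \sim F$. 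After eliminating $F$ as a symbol, this realises $\Pic(G;\mathcal{C})$ as the cokernel of the $r\times(r+1)$ presentation matrix $M = (\operatorname{diag}(c_1,\dots,c_r)\mid -\mathbf{1})$ on the free module $\mathbb{Z}\langle R_1,\dots,R_r,F\rangle$. The cokernel visibly has free rank $1$, accounting for the $\mathbb{Z}$ summand in \eqref{eq: dolgachev SES}.

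Next I would extract the torsion by Smith normal form. The key combinatorial step is to show that the $k$-th determinantal ideal $D_k(M)$ equals $d_{k-1}$ for $k=1,\dots,r$. Direct expansion of a $k\times k$ minor splits into two cases: minors that omit the last column of $M$ are $\prod_{i\in I}c_i$ for $|I|=k$, whose gcd is $d_k$; minors that use the last column reduce by cofactor expansion to $\pm\prod_{j\in I'}c_j$ for $|I'|=k-1$, whose gcd is $d_{k-1}$. The divisibility $d_{k-1}\mid d_k$ is an easy exercise in $p$-adic valuations of a sorted $c$-sequence, so $D_k=\gcd(d_k,d_{k-1})=d_{k-1}$. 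Consequently the invariant factors are $D_k/D_{k-1}=d_{k-1}/d_{k-2}$, giving the torsion summand $\bigoplus_{i=1}^{r-1}\mathbb{Z}/(d_i/d_{i-1})\mathbb{Z}$ advertised.

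For the free generator, observe that $\gcd_i(N/c_i)=1$: any prime $p$ achieves $v_p(c_{i_0})=v_p(N)$ for some $i_0$ by definition of $N=\lcm(c_i)$, so $p\nmid N/c_{i_0}$. Choosing Bézout integers $a_i$ with $\sum_i a_i(N/c_i)=1$ and setting $\Gamma := \sum_i a_iR_i$, one computes $N\Gamma = \sum_i a_i(N/c_i)F = F$, so $\Gamma$ generates the free $\mathbb{Z}$-factor and $R_i=(N/c_i)\Gamma$ modulo torsion. The canonical class identity now follows from Riemann–Hurwitz: $K_\mathcal{C}=\pi^{\ast}K_{\mathbb{P}^1}+\sum_i(c_i-1)R_i=-2F+\sum_i(c_i-1)R_i$, into which substituting $F=N\Gamma$ and $R_i=(N/c_i)\Gamma$ yields
\[
K_\mathcal{C} = N\left(-2 + \sum_{i=1}^r\left(1-\tfrac{1}{c_i}\right)\right)\Gamma = N\left(r-2-\sum_{i=1}^r\tfrac{1}{c_i}\right)\Gamma.
\]

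The main obstacle is the minor computation showing $D_k=d_{k-1}$; once the presentation matrix $M$ and the divisibility $d_{k-1}\mid d_k$ are in hand, the rest of the argument is essentially bookkeeping. An alternative would be to derive Smith normal form directly by inductively clearing entries using that $\gcd_i c_i = d_1$, but the minor approach is cleaner and makes the connection to the $d_i$ transparent.
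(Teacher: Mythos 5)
Your proposal is correct and follows essentially the same route as the paper's deliberately abbreviated proof: present $\Pic(G;\mathcal{C})$ by the invariant fibres over the branch points with the relations $c_iD_i \sim c_jD_j$ coming from pullbacks of principal divisors on $\mathbb{P}^1$, then extract the invariant factors $d_i/d_{i-1}$ via Smith normal form, with your computation that the $k$-th determinantal divisor of the presentation matrix equals $\gcd(d_k, d_{k-1}) = d_{k-1}$ supplying exactly the detail the paper delegates to the citation of \cite{Dolgachev1997a}. One small caveat: since, as you yourself note, $R_i = (N/c_i)\Gamma$ holds only modulo torsion, your final substitution establishes $K_\mathcal{C} = N\bigl(r-2-\sum_i c_i^{-1}\bigr)\Gamma$ only up to a torsion class for an arbitrary B\'ezout choice of $\Gamma$ --- harmless where the proposition is later applied (the Hurwitz signature $(0;2,3,7)$ has pairwise coprime $c_i$, so the torsion vanishes), but strictly the displayed identity should be read as determining $\Gamma$ modulo torsion, or the discrepancy absorbed into the choice of generator when possible.
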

\begin{proof}
	We shall not recreate the entire proof but note that, as it shall be helpful later,  letting $Q_1, \dots, Q_r \in \mathbb{P}^1$ be the branch points of the projection $ \pi : \mathcal{C} \to \mathcal{C}/G$, then $\Pic(G; \mathcal{C})$ is generated by the $G$-invariant divisors 
	\[
	D_i = \pi^{-1}(Q_i) = \sum_{\substack{{P \in \mathcal{C}} \\ {\pi(P) = Q_i}}} P.
	\]
	These are subject to the equivalence relation $c_i D_i - c_j D_j = \pi^\ast(Q_i - Q_j) \sim 0$. Using Smith normal form to compute the abelian invariants yields the $d_i$. 
\end{proof}
\begin{corollary}\label{corr: 2-torsion kernel and cokernel}
	In the situation of Proposition \ref{prop: structure theorem linearised pic}, letting $e = \abs{\pbrace{\text{even $c_i$}}}$,  
	\begin{align*}
	\Pic(G; \mathcal{C}) / 2 \Pic(G; \mathcal{C}) &= \left \lbrace \begin{array}{cc}
	\mathbb{Z}_2, & e = 0, \\\mathbb{Z}_2^{e}, & e \neq 0.
	\end{array}\right . ,  \\
\Pic(G; \mathcal{C})_2 &= \left \lbrace \begin{array}{cc}
	0, & e = 0, \\ \mathbb{Z}_2^{e-1}, & e \neq 0.
\end{array}\right .  . 
	\end{align*}
\end{corollary}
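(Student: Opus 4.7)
The plan is to read both structural answers off directly from the explicit decomposition
\[
\Pic(G;\mathcal{C}) \cong \mathbb{Z} \oplus \bigoplus_{i=1}^{r-1} \mathbb{Z}/s_i\mathbb{Z}, \qquad s_i := d_i/d_{i-1},
\]
supplied by Proposition \ref{prop: structure theorem linearised pic}. Both $M \mapsto M/2M$ and $M \mapsto M_2$ commute with finite direct sums, and each sends $\mathbb{Z}/n\mathbb{Z}$ to $\mathbb{Z}_2$ when $n$ is even and to $0$ when $n$ is odd, while the free factor $\mathbb{Z}$ contributes $\mathbb{Z}_2$ to the first and $0$ to the second. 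The whole computation therefore reduces to counting how many of the invariant factors $s_1, \dots, s_{r-1}$ are even.

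To carry out that count I would use $2$-adic valuations. Because $d_i$ is a gcd of the products $\prod_{j \in S} c_j$ with $|S|=i$ and $v_2$ of a gcd equals the minimum of $v_2$ of its arguments, one has $v_2(d_i) = \sum_{j=1}^i v_2(c_{\sigma(j)})$, where $\sigma$ orders the $v_2(c_j)$ non-decreasingly. Writing $e$ for the number of even entries of the signature, exactly $r-e$ of the $v_2(c_j)$ vanish, so $v_2(d_i)=0$ for $i \leq r-e$ and $v_2(d_i)>v_2(d_{i-1})$ for $i > r-e$. Equivalently, $s_i$ is odd for $i \leq r-e$ and even for $i \geq r-e+1$.

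Intersecting with the range $1 \leq i \leq r-1$ that appears in the structure theorem then produces $0$ even invariant factors when $e=0$ and exactly $(r-1)-(r-e+1)+1 = e-1$ when $e \geq 1$, with the endpoint $e=r$ checked separately. Combining with the $\mathbb{Z}$ summand yields the claimed isomorphisms. The argument is essentially mechanical once Proposition \ref{prop: structure theorem linearised pic} is available; the only real subtlety, and the reason the cokernel has exponent $e$ while the $2$-torsion has exponent $e-1$, is the off-by-one between the $r-1$ torsion factors listed in the structure theorem and the $e$ total summands of $\mathbb{Z}_2$ in the cokernel, the extra one being contributed by the free $\mathbb{Z}$.
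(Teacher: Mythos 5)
Your proof is correct, but it takes a genuinely different route from the paper's. You derive everything in one pass from the invariant-factor decomposition: since $v_2$ of a gcd is the minimum of the valuations, $v_2(d_i)$ is the sum of the $i$ smallest values $v_2(c_j)$, so $s_i = d_i/d_{i-1}$ is odd for $i \le r-e$ and even for $i > r-e$, and both claims follow by applying $M \mapsto M/2M$ and $M \mapsto M_2$ summand by summand. The paper instead splits the two statements and computes neither via valuations. For the cokernel it works directly with the divisor presentation
\[
\Pic(G; \mathcal{C}) / 2 \Pic(G; \mathcal{C}) = \pangle{D_i, \, i=1, \dots, r \, \mid \, \forall i, j, \; c_i D_i - c_j D_j = 0 = 2 D_i},
\]
reducing the relations by hand: if all $c_i$ are odd the relations collapse to $D_i = D_j$, leaving $\pangle{D_1 \mid 2D_1 = 0} \cong \mathbb{Z}_2$, while if $c_1, \dots, c_e$ are even the generators with odd $c_j$ are killed and the $e$ surviving generators give $\mathbb{Z}_2^e$. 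For the $2$-torsion it adjoins the factor $\mathbb{Z}/N\mathbb{Z}$ (the top invariant factor, $N = \lcm(c_i)$) and uses the re-assembly identity
\[
\mathbb{Z}/N \mathbb{Z} \oplus \Bigl[\bigoplus_{i=1}^{r-1}\mathbb{Z}/(d_i/d_{i-1}) \mathbb{Z}\Bigr] \cong \bigoplus_{i=1}^{r}\mathbb{Z}/c_i \mathbb{Z},
\]
whose $2$-torsion is visibly $\mathbb{Z}_2^e$, then subtracts the $\mathbb{Z}_{\gcd(2,N)}$ contribution. Your valuation count is the more uniform and mechanical argument: a single computation yields both statements and moreover locates exactly which invariant factors are even. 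The paper's route buys the geometric visibility of the generators $D_i$ (which the surrounding text reuses) and replaces valuation bookkeeping with a Smith-normal-form recognition. One small remark: your aside that the endpoint $e=r$ needs a separate check is harmless but unnecessary, since the interval $\pbrace{r-e+1, \dots, r-1}$ has exactly $e-1$ elements for every $1 \le e \le r$.
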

\begin{proof}
	The first equality from writing 
	\[
	\Pic(G; \mathcal{C}) / 2 \Pic(G; \mathcal{C}) = \pangle{D_i, \, i=1, \dots, r \, | \, \forall i, j, \, c_i D_i - c_j D_j = 0 = 2 D_i}.
	\]
If all $c_i$ are odd, then we may use the second relation to reduce the first relation to $D_i = D_j$, and then the cokernel has presentation $\pangle{D_1 \, | \, 2 D_1 = 0}$. If there are $c_i$ which are even, say $c_1, \dots, c_e$ then they will drop out of the first relation to just give $D_i=0$ for $i>e$ and then the cokernel has presentation $\pangle{D_i, \, i=1, \dots, e \, | \, \forall i, \, 2D_i = 0}$. 
 
The second follows from observing that 
	\begin{equation}\label{eq: Torsion portion of linearised group}
	\mathbb{Z}/N \mathbb{Z} \oplus \psquare{\bigoplus_{i=1}^{r-1}\mathbb{Z}/(d_i/d_{i-1}) \mathbb{Z}}
	=\bigoplus_{i=1}^{r}\mathbb{Z}/(d_i/d_{i-1}) \mathbb{Z}
	=\bigoplus_{i=1}^{r}\mathbb{Z}/c_i \mathbb{Z}.
	\end{equation}
 The 2-torsion part of the left hand side is $\mathbb{Z}_{\gcd(2, N)} \oplus \Pic(G; \mathcal{C})_2$, while 2-torsion of the right hand side is $\mathbb{Z}_2^e$. The result follows using 
 \[
 \mathbb{Z}_{\gcd(2, N)} = \left \lbrace \begin{array}{cc}
     0, & e=0, \\
     \mathbb{Z}_2 & e \neq 0.
 \end{array} \right.
 \]
\end{proof}

As invariant theta characteristics would give elements of $\Pic(\mathcal{C})^G$ which square to $K_\mathcal{C}$, we can hope that in particular circumstances  Equation (\ref{eq: dolgachev SES}) may give us enough information  to determine the number of invariant characteristics. We have the following characterisations.
\begin{lemma}
	Suppose we are in the situation of Proposition \ref{prop: structure theorem linearised pic}. Then
\begin{itemize}
	\item an invariant characteristic exists if and only if the image of $K_\mathcal{C} \in \Pic(G;\mathcal{C})$ under the map $\Pic(G; \mathcal{C}) \to \Pic(\mathcal{C})^G$ is a square, and 
	\item if exactly $m>0$ invariant characteristic exists, then the 2-torsion subgroup $\Pic(\mathcal{C})^G_2$ has rank $m-1$.  
\end{itemize}
\end{lemma}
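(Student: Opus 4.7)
The plan is to handle both bullets by exploiting the torsor structure of the set of invariant theta characteristics over $\Pic(\mathcal{C})^G_2$.

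For the first bullet, I would argue this is essentially a definition-chase. By definition, a theta characteristic is a line bundle $L$ with $L^{\otimes 2}\cong K_\mathcal{C}$; the requirement that $L$ be $G$-invariant is that its isomorphism class lie in $\Pic(\mathcal{C})^G$. Hence an invariant theta characteristic exists if and only if the class $[K_\mathcal{C}]\in\Pic(\mathcal{C})^G$ has a square root in the group $\Pic(\mathcal{C})^G$. In Proposition~\ref{prop: structure theorem linearised pic}, $K_\mathcal{C}$ is realised explicitly as a $G$-linearised line bundle, i.e.\ as an element of $\Pic(G;\mathcal{C})$; the ``image of $K_\mathcal{C}$'' in the statement refers to the projection under the surjection $\Pic(G;\mathcal{C})\to\Pic(\mathcal{C})^G$ appearing in the exact sequence \eqref{eq: Dol ES}, and the choice of linearisation is immaterial because this map forgets it. The first bullet therefore reduces to the stated tautology.

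For the second bullet, I would fix one invariant theta characteristic $L_0$ (which exists by assumption) and set up the standard torsor correspondence. For any other invariant theta characteristic $L$, the bundle $L\otimes L_0^{-1}$ has class in $\Pic(\mathcal{C})^G$ (as the difference of two invariant classes) and satisfies $(L\otimes L_0^{-1})^{\otimes 2}\cong K_\mathcal{C}\otimes K_\mathcal{C}^{-1}\cong\mathcal{O}_\mathcal{C}$, so it is an element of $\Pic(\mathcal{C})^G_2$. Conversely, $\eta\mapsto L_0\otimes\eta$ produces an invariant theta characteristic from any $\eta\in\Pic(\mathcal{C})^G_2$. These are mutually inverse, giving a bijection between the set of $m$ invariant theta characteristics and the group $\Pic(\mathcal{C})^G_2$. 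Thus $\lvert\Pic(\mathcal{C})^G_2\rvert=m$, and the $m-1$ non-identity elements of the $2$-torsion subgroup are in bijection with the $m-1$ invariant characteristics distinct from $L_0$, which is the stated claim.

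The main obstacle, such as it is, is purely book-keeping: making sure the distinction between $\Pic(G;\mathcal{C})$ (a group of linearised bundles with chosen linearisation) and $\Pic(\mathcal{C})^G$ (a group of invariant isomorphism classes) is handled consistently in the first bullet, so that the ``square root'' condition is imposed on the correct object. Once that is clear, both statements follow directly from the definitions and the torsor action, with no further input from Dolgachev's exact sequence \eqref{eq: Dol ES} needed beyond the map $\Pic(G;\mathcal{C})\to\Pic(\mathcal{C})^G$ itself; the power of that sequence will only be felt in the subsequent sections when one computes both $K_\mathcal{C}$ and the kernel/cokernel of multiplication-by-$2$ on $\Pic(\mathcal{C})^G$ to extract $m$ explicitly.
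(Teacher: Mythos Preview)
Your argument is correct and is the natural one; the paper itself states this lemma without proof, so there is no alternative approach to compare against. The torsor argument you give is exactly what the authors must have in mind.

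One remark on the second bullet: your bijection establishes $\lvert\Pic(\mathcal{C})^G_2\rvert=m$, and since the $2$-torsion subgroup of any abelian group is an $\mathbb{F}_2$-vector space, this means its \emph{rank} is $\log_2 m$, not $m-1$. Your final sentence tries to salvage the phrase ``rank $m-1$'' by reinterpreting it as ``$m-1$ non-identity elements'', but that is not what rank means. The paper's wording is almost certainly a slip; note that $\log_2 m = m-1$ precisely for $m\in\{1,2\}$, and the only cases the paper actually uses later (cyclic groups with $n$ odd, and Hurwitz curves) have $m\in\{0,1\}$, so the discrepancy never bites. It would be cleaner simply to state and prove the correct conclusion $\lvert\Pic(\mathcal{C})^G_2\rvert=m$ and note that this is what the lemma should say.
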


As computing the 2-torsion subgroup of $\Pic(\mathcal{C})^G$ will be important, we recall briefly a quick fact from homological algebra
\begin{lemma}\label{lemma: torsion snake}
Given a SES $0 \to A \to B \to C \to 0$ there is an associated LES 
\begin{equation}
0 \to A_2 \to B_2 \to C_2 \to A/2A \to B/2B \to C/2C \to 0
\end{equation}
where $A_2$ is the 2-torsion part of $A$ (and likewise for $B,C$).
\end{lemma}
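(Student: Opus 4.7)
The plan is to apply the snake lemma to the multiplication-by-2 endomorphism on the given short exact sequence. Concretely, I would form the commutative diagram
\[
\begin{array}{ccccccccc}
0 & \to & A & \to & B & \to & C & \to & 0 \\
 & & \downarrow \scriptstyle{2} & & \downarrow \scriptstyle{2} & & \downarrow \scriptstyle{2} & & \\
0 & \to & A & \to & B & \to & C & \to & 0
\end{array}
\]
where the vertical maps are multiplication by $2$. Both rows are exact by hypothesis, and the diagram commutes because the horizontal maps are group homomorphisms (so they are $\mathbb{Z}$-linear).

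The next step is simply to read off the kernels and cokernels of the vertical maps. By definition, the kernel of multiplication by $2$ on an abelian group $X$ is precisely the 2-torsion subgroup $X_2$, and the cokernel is precisely $X/2X$. The snake lemma then yields the six-term exact sequence
\[
0 \to A_2 \to B_2 \to C_2 \xrightarrow{\partial} A/2A \to B/2B \to C/2C \to 0,
\]
where the connecting homomorphism $\partial$ is given by the usual diagram chase: lift $c \in C_2$ to some $b \in B$, note that $2b$ maps to $0$ in $C$ so comes from a unique $a \in A$, and set $\partial(c) = [a] \in A/2A$. Exactness at each node is part of the conclusion of the snake lemma.

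There is essentially no obstacle here; the only thing to verify beyond citing the snake lemma is that the leftmost map $A_2 \to B_2$ remains injective (clear, since $A \to B$ was injective) and that the rightmost map $B/2B \to C/2C$ remains surjective (clear, since $B \to C$ was surjective). Both of these facts also follow formally from the snake lemma applied to the augmented diagram, so the statement is immediate.
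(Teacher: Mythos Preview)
Your argument is correct: applying the snake lemma to the multiplication-by-$2$ endomorphism of the short exact sequence is exactly the standard way to obtain this six-term sequence, and your identification of kernels and cokernels with $X_2$ and $X/2X$ is right. The paper itself offers no proof, merely recalling the statement as ``a quick fact from homological algebra,'' so there is nothing further to compare.
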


\begin{example}[Cyclic Groups]
	Consider the case $G=C_n$ with quotient genus 0, which gives 
	$$
	\Hom(G,\mathbb{C}\sp\times)\cong C_n,\quad H^2(G,\mathbb{C}\sp\times)\cong 0,
	$$
	and 
	$$
	\Pic(G; \mathcal{C}) \cong \mathbb{Z} \oplus \psquare{\bigoplus_{i=1}^{r-1}\mathbb{Z}/(d_i/d_{i-1}) \mathbb{Z}},
	$$
	with \[
	K_{\mathcal{C}} = N\pround{r - 2 - \sum_i \frac{1}{c_i}} \Gamma. 
	\]
	Equation (\ref{eq: Dol ES}) thus becomes 
	\begin{equation}\label{sesks}
	0\to\mathbb{Z}_n \to \Pic(G; \mathcal{C}) \to \Pic(\mathcal{C})^G\to
	0.
	\end{equation}
	In the case of a cyclic group action which quotients to $\mathbb{P}^1$ \cite{Harvey1966} shows that $N = n$. Then Riemann-Hurwitz gives that $K_\mathcal{C}=2(g-1)\Gamma$ which shows that $(g-1)\Gamma$ is an invariant theta characteristic. We now need to count how many exist. 
	
    Applying Lemma \ref{lemma: torsion snake} to Equation (\ref{sesks}) we get 
	\begin{equation}\label{eq: DES cyclic case}
	0 \to \mathbb{Z}_{\gcd(2, n)} \to \Pic(G; \mathcal{C})_2 \to \Pic(\mathcal{C})^G_2 \to \mathbb{Z}_n/2\mathbb{Z}_n \to \Pic(G; \mathcal{C})/2\Pic(G; \mathcal{C}), 
	\end{equation}
	We analyse this in the case of $n$ odd and $n$ even. Let $e$ be the number of $c_i$ which are even. We will use Corollary \ref{corr: 2-torsion kernel and cokernel}.
	\begin{itemize}
		\item When $n$ is odd, $e=0$, and so Equation (\ref{eq: DES cyclic case}) becomes 
		\[
		0 \to 0 \to 0 \to \Pic(\mathcal{C})^G_2 \to 0 \to \mathbb{Z}_2,
		\]
		and immediately we see $\Pic(\mathcal{C})^G_2=0$, i.e. the invariant characteristic is unique. 
		\item When $n$ is even Equation (\ref{eq: DES cyclic case}) becomes 
		\[
		0 \to \mathbb{Z}_2 \to \mathbb{Z}_2^{e-1} \to \Pic(\mathcal{C})^G_2 \to \mathbb{Z}_2 \to \mathbb{Z}_2^{e}.
		\]
        At this stage we claim that the map $\Pic(\mathcal{C})^G_2 \to \mathbb{Z}_2$ is the zero map. To see this, suppose we have $[L] \in \Pic(\mathcal{C})^G_2$. Because $H^2(G, \mathbb{C}^\times)=0$ we know $[L] = [L^\prime]$ where $L^\prime$ is some $G$-linearisable line bundle. Pick a linearisation $\pbrace{\phi_g}$ of $L^\prime$, of which there are $n$ choices. We know $[L^2] =[L]^2 =  [\mathcal{O}_{\mathcal{C}}]$, so $L^2$ is the image of a linearisation $\pbrace{\psi_g}$ of the trivial line bundle (there are $n$ many of them), and it is this trivialisation the connecting map sends $[L]$ to. We have $\phi_g^2 \sim \psi_g$, and then $\pbrace{\psi_g}$ is necessarily trivial in the cokernel. Suppose we had instead chosen $\pbrace{\tilde{\phi}_g}$ as the linearisation of $L^\prime$. Then the two linearisations would differ by an element of $\Hom(G, \mathbb{C}^\times)$, and so $\phi_g^2$ differs from $\tilde{\phi}_g$ by an element of $2\Hom(G, \mathbb{C}^\times)$, so they are equal in the cokernel.
	\end{itemize}
\end{example}

\subsection{Hurwitz Curves}\label{sec: Hurwitz curves}

When $\mathcal{C}$ is a Hurwitz curve and $G$ is the full automorphism group, we know $\abs{G} = 84(g-1)$ and the signature is $(0; 2, 3, 7)$, which means that simply $\Pic(G; \mathcal{C}) = \mathbb{Z}\gamma$, and moreover that $\Gamma = K_\mathcal{C}$. Moreover, all Hurwitz groups are perfect (that is $\psquare{G, G} = G$) \cite{Conder1990}, and so $\Hom(G, \mathbb{C}^\times) = 0$. This reduces Equation (\ref{eq: dolgachev SES}) to 
\begin{equation}\label{eq: dolgachev SES simplified}
0 \to \mathbb{Z} K_\mathcal{C} \to \Pic(\mathcal{C})^G \to H^2(G, \mathbb{C}^\times) \to 0. 
\end{equation}
Obviously when $H^2(G, \mathbb{C}^\times)=0$, Equation (\ref{eq: dolgachev SES simplified}) is just 
\[
0 \to \mathbb{Z} K_{\mathcal{C}} \to \Pic(\mathcal{C})^G \to 0,
\]
and no invariant characteristics can exist, so we assume that the multiplier group is nontrivial from hereon in.

We can immediately apply Lemma \ref{lemma: torsion snake} to get
\[
0 \to \Pic(G; \mathcal{C})_2 \to \Pic(\mathcal{C})_2^G \to H^2(G, \mathbb{C}^\times)_2 \to \Pic(G; \mathcal{C})/ 2\Pic(G; \mathcal{C}),
\]
which can be rewritten as 
\begin{equation}\label{eq: torsion Hurwitz sequence reduced}
0 \to 0 \to \Pic(\mathcal{C})_2^G \to H^2(G, \mathbb{C}^\times)_2 \to \mathbb{Z}_2.
\end{equation}
Dolgachev provides the following helpful interpretation.
\begin{lemma}[\cite{Dolgachev1997}]\label{lemma: Dolgachev interpretation of connecting map}
Let $G$ be a perfect group acting with signature containing only a single even element $c_1=2$. The map $H^2(G, \mathbb{C}^\times)_2 \to \Pic(G; \mathcal{C})/ 2\Pic(G; \mathcal{C})$ is the restriction map $H^2(G, C_2) \to H^2(G_{P_1}, C_2)$ where $P_1 \in \mathcal{C}$ is a ramification point whose isotropy subgroup $G_{P_1} \cong C_2$.
\end{lemma}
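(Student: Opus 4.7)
The plan is to prove this by unpacking both sides of the claimed equality and invoking naturality of the Dolgachev sequence under restriction from $G$ to the isotropy subgroup $H := G_{P_1} \cong C_2$.

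First I would establish two natural identifications. Because $G$ is perfect, $\mathrm{Hom}(G,\mathbb{C}^\times)=0$, and the long exact sequence associated to $0 \to C_2 \to \mathbb{C}^\times \xrightarrow{(-)^2} \mathbb{C}^\times \to 0$ yields a natural isomorphism $H^2(G, C_2) \xrightarrow{\sim} H^2(G,\mathbb{C}^\times)_2$. On the Picard side, Corollary \ref{corr: 2-torsion kernel and cokernel} gives $\Pic(G;\mathcal{C})/2\Pic(G;\mathcal{C}) \cong \mathbb{Z}_2$, generated by the class of $D_1 = \pi^{-1}(Q_1)$, and this canonically matches $H^2(H, C_2) \cong \mathbb{Z}_2$.

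Next I would unpack the snake-lemma connecting map $\delta$ concretely. Given $\sigma \in H^2(G,C_2)$, represent it as a central extension $1 \to C_2 \to \tilde{G} \to G \to 1$. A bundle $[L] \in \Pic(\mathcal{C})^G$ lifting $\sigma$ under the Dolgachev map corresponds to a $\tilde{G}$-equivariant line bundle on which the central $C_2$ acts by $-1$; then $L^2$ descends to a genuine $G$-equivariant bundle $[L^2] \in \Pic(G;\mathcal{C})$, and its class modulo $2\Pic(G;\mathcal{C})$ is by definition $\delta(\sigma)$. The proof then reduces to showing $[L^2] \equiv [D_1] \mod 2$ if and only if the restricted extension $\tilde{H} := \tilde{G} \times_G H$ is the nontrivial extension $C_4$ (rather than $C_2 \times C_2$), which is exactly the statement $\mathrm{res}^G_H(\sigma) \neq 0 \in H^2(H, C_2)$.

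The main obstacle will be executing this last matching rigorously. A local analysis at $P_1$ should suffice: the generator $[D_1]$ is locally a simple ramification divisor, and near $P_1$ the bundle $L$ is analytically trivial, with its $\tilde{G}$-equivariant structure captured by a character of $\tilde{H}$ on the fibre over $P_1$. Whether this character factors through $H$ or registers the generator of the central $C_2$ distinguishes the two isomorphism classes of $\tilde{H}$, and this in turn determines whether $L^2$ descends to the locally-trivial bundle or to a local twist by $D_1$. Making this matching precise is cleanest via a \v{C}ech-cocycle computation on $\mathcal{C}/G$, or, as in Dolgachev's original treatment, via the Grothendieck spectral sequence for the $G$-action, which I would invoke if the direct cocycle argument becomes unwieldy.
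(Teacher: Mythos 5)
The paper offers no proof of this lemma at all: it is imported verbatim from Dolgachev \cite{Dolgachev1997}, so there is no in-paper argument to compare yours against, and your proposal has to be judged as a reconstruction of Dolgachev's result. On those terms it is essentially correct. The identification $H^2(G, C_2) \cong H^2(G, \mathbb{C}^\times)_2$ via $0 \to C_2 \to \mathbb{C}^\times \to \mathbb{C}^\times \to 0$ is valid exactly because $G$ is perfect, so $H^1(G, \mathbb{C}^\times) = \Hom(G, \mathbb{C}^\times) = 0$; your unpacking of the snake connecting map as $\delta(\sigma) = [L^2] \bmod 2\Pic(G; \mathcal{C})$ is the right one (and is well defined: two lifts of $\sigma$ differ by an element of $\Pic(G; \mathcal{C})$, changing $[L^2]$ by an even element, with the linearisation on $L^2$ unique again by perfectness); and the local analysis at $P_1$ is indeed the mechanism that closes the argument, matching Dolgachev's own treatment via local isotropy characters.

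Two points need tightening before the sketch is a proof. First, your dichotomy ``whether this character factors through $H$'' is misstated: by construction the central $C_2$ acts on the fibre $L_{P_1}$ by $-1$, so the character $\chi$ of $\tilde{H}$ \emph{never} factors through $H$. What distinguishes $\tilde{H} \cong V_4$ from $\tilde{H} \cong C_4$ is the order of $\chi$ --- its value on a lift $\tilde{g}$ of the generator is $\pm 1$ versus $\pm i$ --- equivalently whether $\chi^2$ is the trivial or the nontrivial character of $H$. Second, the ``canonical matching'' you defer is carried by a specific homomorphism you should exhibit: the map $\Pic(G; \mathcal{C}) \to \widehat{G_{P_1}} \cong \mathbb{Z}_2$ sending a linearised bundle to the isotropy character on its fibre at $P_1$. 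This kills $2\Pic(G; \mathcal{C})$ and all $D_i$ with $i \geq 2$ (as $P_1$ is not in their support), is compatible with the relations $c_i D_i \sim c_j D_j$ (for $i=1$ because $c_1 = 2$), and sends $\mathcal{O}(D_1)$ with its canonical linearisation to the nontrivial character, since the involution $z \mapsto -z$ acts by $-1$ on the local generator $1/z$; by the presentation in Corollary \ref{corr: 2-torsion kernel and cokernel} it therefore realises the isomorphism $\Pic(G; \mathcal{C})/2\Pic(G; \mathcal{C}) \cong \mathbb{Z}_2$. Since the $\tilde{H}$-character on $L_{P_1}$ squares to the isotropy weight of $L^2$ at $P_1$, these two repairs give $\delta(\sigma) \neq 0 \Leftrightarrow \tilde{H} \cong C_4 \Leftrightarrow \operatorname{res}^G_{H}\sigma \neq 0$, which is the lemma because the common target is $\mathbb{Z}_2$; your fallback to the Grothendieck spectral sequence is then unnecessary.
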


When we restrict to looking at simple Hurwitz groups, we may use the fact that the Schur multipliers have been computed in the Atlas \cite{Conway1985} and are cyclic groups. In particular, for the finite simple groups which are Hurwitz groups \cite{Conder1990, Wilson1993, Wilson2001} we give the relevant multipliers in Table \ref{tab: finite simple Hurwitz groups}.
\begin{longtable}
{p{0.1\linewidth}|p{0.14\linewidth}|p{0.58\linewidth}}
\caption{All finite simple Hurwitz groups and their Schur multiplier} \\
    $G$ & $H^2(G, \mathbb{C}^\times)$ & Comments \\ \hline \hline 
    $\PSL_2(q)$ & $C_{\gcd(2, q-1)}$ & $q=7$, $q=p$ for prime $p=\pm 1 \mod 7$, or $q=p^3$ for prime $p = \pm 2, \pm 3 \mod 7$. \\
    $A_n$ & $C_2$ & $A_n$ Hurwitz for all but finitely many $n$ \\
    $\indices{^2}G_2(3^p)$ & trivial & $p$ a prime $> 3$\\
    $J_1$ & trivial & \\
    $J_2$ & $C_2$ & \\
    $Co_3$ & trivial & \\
    $He$ & trivial & \\
    $Ru$  & $C_2$ & \\
    $HN$ & trivial & \\
    $Ly$ & trivial & \\
    $Fi_{24}^\prime$ & $C_3$ & \\
    $Th$ & trivial & \\
    $Fi_{22}$ & $C_6$ & \\
    $J_4$ & trivial & \\
    $M$ & trivial & 
\label{tab: finite simple Hurwitz groups}
\end{longtable}
Our earlier results show that the Hurwitz curves with automorphism group $\PSL_2(8)$, $\indices{^2}G_2(3^p)$, $J_1$, $Co_3$, $He$, $HN$, $Ly$, $Th$, $J_4$, or $M$ have no invariant theta characteristics as the corresponding multiplier group is trivial. 

Now because $\Hom(G, \mathbb{C}^\times)=0$ we have that $\Pic(G; \mathcal{C})\cong \mathbb{Z}\gamma$ is a subgroup of $\Pic(\mathcal{C})^G$ of index $n := \abs{H^2(G, \mathbb{C}^\times)}$.  
Set $A=\Pic(G; \mathcal{C})\cong \mathbb{Z}$, 
  $B=\Pic(\mathcal{C})^G\cong \mathbb{Z}\oplus T$,
  where $T$ is the torsion subgroup,
  $C= H^2(G, \mathbb{C}^\times)\cong C_n$, and write
our exact sequence as
$$0\to \mathbb{Z} \overset{\phi}{\to} \mathbb{Z} \oplus T \overset{\psi}{\to} C_n \to 0,\qquad \phi(1)=(k,l).$$
Now if $0\rightarrow A\rightarrow B\rightarrow C$ is exact then so is
$0\rightarrow \Tor(A)\rightarrow \Tor(B)\rightarrow \Tor(C)$. Thus $T$ injects into the cyclic group  $C_n$ and so $T\cong C_c$ for some $c|n$. 
The matrix of generators and relations of the quotient $B/A=\Pic(\mathcal{C})^G/\Pic(G; \mathcal{C})$ is then $\begin{psmallmatrix}
 0 & c \\ k & l
\end{psmallmatrix}$. The Smith normal form of  this will be $C_{d_1}\times C_{d_2/d_1} $
where $d_1=\gcd(c,k,l)$, $d_2=ck$. For this quotient to be isomorphic to $C_n$, it certainly must be true that $ck=n$.

Now an invariant theta characteristic exists if and only if $k$ is even and there exists some $m$ such that $l = 2m \mod c$, which is to say that an invariant characteristic can never exist when $n$ is odd as then necessarily $k$ is odd. This tells us that Hurwitz curves with automorphism group $Fi^\prime_{24}$ have no invariant theta characteristics, and hereon we assume $n$ is even. Looking at Table \ref{tab: finite simple Hurwitz groups} we see that when $n$ is even, the maximum power of 2 dividing $n$ is just 2, and so we have 
\[
k \text{ even} \Leftrightarrow c \text{ odd} \Leftrightarrow \Pic(\mathcal{C})^G_2 = 0.
\]
As $c$ being odd is sufficient to be able to solve $l = 2m \mod c$ for some $m$, we see that we have an invariant characteristics, and moreover a unique one, when $\Pic(\mathcal{C})^G_2=0$. By using Lemma \ref{lemma: Dolgachev interpretation of connecting map} with Equation (\ref{eq: torsion Hurwitz sequence reduced}) this is equivalent to having the map $H^2(G, C_2) \to H^2(G_{P_1}, C_2)$ being injective, equivalently surjective.

\begin{prop}
Let $\mathcal{C}$ be a Hurwitz curve with simple automorphism group $G$. Then $\mathcal{C}$ has a unique invariant characteristic exactly when $H^2(G, \mathbb{C}^\times)\cong C_n$ has $n>0$ even and the map $H^2(G, C_2) \to H^2(G_{P_1}, C_2)$ is surjective. Otherwise, $\mathcal{C}$ has no invariant characteristics. 	
\end{prop}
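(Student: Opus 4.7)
The plan is to assemble the arguments already laid out in the preceding discussion. I would start from the simplified Dolgachev exact sequence
$$
0 \to \mathbb{Z} K_\mathcal{C} \to \Pic(\mathcal{C})^G \to H^2(G, \mathbb{C}^\times) \to 0,
$$
which holds because $G$ is perfect and the Hurwitz signature $(0;2,3,7)$ forces $\Pic(G;\mathcal{C}) = \mathbb{Z}\gamma$ with $\Gamma = K_\mathcal{C}$. An invariant theta characteristic is precisely an element of $\Pic(\mathcal{C})^G$ whose double equals $K_\mathcal{C}$, so the task reduces to detecting $2$-divisibility of the class of $K_\mathcal{C}$ in $\Pic(\mathcal{C})^G$.

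Next I would carry out the Smith normal form analysis already begun in the body. Write $\Pic(\mathcal{C})^G \cong \mathbb{Z} \oplus T$ with $T$ the torsion subgroup; torsion-freeness of $\Pic(G;\mathcal{C})$ embeds $T$ into $C_n := H^2(G,\mathbb{C}^\times)$, so $T \cong C_c$ with $c \mid n$. Expressing the inclusion $\Pic(G;\mathcal{C}) \hookrightarrow \Pic(\mathcal{C})^G$ as $1 \mapsto (k,l)$, the Smith normal form of the $2 \times 2$ presentation of $C_n$ forces $ck = n$ and $\gcd(c,k,l) = 1$. Existence of an invariant characteristic then amounts to $k$ being even and $l \in 2C_c$, and the number of invariants, when nonzero, equals $|(C_c)_2|$. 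At this stage I would invoke Table \ref{tab: finite simple Hurwitz groups}: for every finite simple Hurwitz group the Schur multiplier is cyclic of $2$-part of order at most $2$. Hence if $n$ is odd then $k$ is odd and no invariant exists; if $n$ is even then the $2$-adic valuation $v_2(ck) = 1$ forces $k$ and $c$ to have opposite parities, so $c$ odd gives $k$ even, $2 C_c = C_c$, and a unique invariant characteristic, while $c$ even gives $k$ odd and no invariants. Either way the count is $0$ or $1$.

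Finally I would translate the surviving condition ``$c$ odd'' into the restriction-map language. Lemma \ref{lemma: torsion snake} applied to Equation (\ref{eq: dolgachev SES simplified}) produces Equation (\ref{eq: torsion Hurwitz sequence reduced}), and Lemma \ref{lemma: Dolgachev interpretation of connecting map} identifies the connecting homomorphism $H^2(G,\mathbb{C}^\times)_2 \to \mathbb{Z}_2 = \Pic(G;\mathcal{C})/2\Pic(G;\mathcal{C})$ with the restriction $H^2(G,C_2) \to H^2(G_{P_1},C_2)$ at the unique even-order ramification point. When $n$ is even both sides have order $2$, so $\Pic(\mathcal{C})^G_2 = 0$ if and only if this restriction is surjective, matching the proposition's formulation. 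The main obstacle in writing this out cleanly is purely bookkeeping: one must keep the parity constraints from the Smith normal form step lined up consistently with the cokernel rank, and verify that the ``surjective $=$ injective'' shortcut is applied only when source and target of the restriction map genuinely both have order $2$, i.e. precisely when $n$ is even.
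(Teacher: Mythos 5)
Your proposal is correct and follows essentially the same route as the paper: the simplified exact sequence $0 \to \mathbb{Z} K_\mathcal{C} \to \Pic(\mathcal{C})^G \to H^2(G,\mathbb{C}^\times) \to 0$, the Smith normal form analysis giving $ck = n$ with $T \cong C_c$, the Atlas fact that the $2$-part of the Schur multiplier of a simple Hurwitz group has order at most $2$, and the identification via Lemma \ref{lemma: Dolgachev interpretation of connecting map} of the connecting map with the restriction $H^2(G, C_2) \to H^2(G_{P_1}, C_2)$. Your added care in checking that ``surjective $=$ injective'' is legitimate only when source and target both have order $2$ (i.e.\ when $n$ is even) is exactly the parity bookkeeping the paper relies on.
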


One way to understand the map $H^2(G, C_2) \to H^2(G_{P_1}, C_2)$ is using the fact that isomorphism classes of central extensions of $G$ by an abelian group $A$, $0 \to A \overset{i}{\to} E \overset{\pi}{\to} G \to 0$, are in bijection with elements of $H^2(G, A)$ where $A$ is a trivial $G$-module. Given then inclusion $\imath :H \hookrightarrow G$, the restriction morphism $H^2(G, A) \to H^2(H, A)$ works via the diagram \cite[Exercise 6.6.4]{Weibel1995}
\begin{center}
\begin{tikzcd}
    0\arrow[r] & A\arrow[r]\arrow[d, "="] & E^\prime\arrow[r]\arrow[d] & H\arrow[r]\arrow[d, hook, "\imath"] & 0 \\ 0\arrow[r] & A\arrow[r] & E\arrow[r, "\pi"] & G\arrow[r] & 0,
\end{tikzcd}
\end{center}
sending the cocycle corresponding to the extension $E$ to the cocycle corresponding to the extension
\[
E^\prime := E \times_G H = \pbrace{(e, h) \in E \times H \, | \, \pi(e) = \imath(h)}. 
\]
There are two central extensions of $G = C_2$ by $A = C_2$, these are $E = V_4$ and $E = C_4$, with the latter corresponding to the generator of $H^2(C_2, C_2)$. 

Restricting to the case where $H^2(G, \mathbb{C}^\times)_2 = H^2(G, C_2) \cong C_2$, suppose $g = \gamma_1 \in G$ generates the subgroup $H=G_{P_1}$ and $\tilde{g}$ is one of the elements of $\pi^{-1}(g) \in E$ where $E$ is the unique nontrivial $C_2$ central extension of $G$, often denoted $2\cdot G$. We can define likewise $\tilde{e}$ to be the order-2 lift of the identity $e \in G$. We then have 
\[
E^\prime = \pbrace{(e, e), (\tilde{e}, e), (\tilde{g}, g), (\tilde{e}\tilde{g}, g)}.
\]
We define the \bam{lifting order} of $g$ to be the order of $\tilde{g}$ in $E$, and it depends only on the conjugacy class of $g$ \cite{Conway1985}. There are two possible cases:
\begin{enumerate}
    \item the lifting order is 2, so $\tilde{g}^2 = e$, and $E^\prime \cong V_4$, or 
    \item the lifting order is 4, to $\tilde{g}^2 = \tilde{e}$, and $E^\prime \cong C_4$.
\end{enumerate}
The latter is equivalent to the map $H^2(G, C_2) \to H^2(G_{P_1}, C_2)$ being surjective. As such, we have proven the following result.
\begin{prop}
    Given a Hurwitz curve $\mathcal{C}$ with simple automorphism group $G$, and $g \in G$ generating the stabiliser group $G_{P_1} \cong C_2$, either:
    \begin{enumerate}
        \item $H^2(G, \mathbb{C}^\times)$ is cyclic of odd order, and $\mathcal{C}$ has no invariant characteristics,
        \item $H^2(G, \mathbb{C}^\times)$ is cyclic of even order, the lifting order of $g$ in $2 \cdot G$ is 2, and $\mathcal{C}$ has no invariant characteristics, or
        \item $H^2(G, \mathbb{C}^\times)$ is cyclic of even order, the lifting order of $g$ in $2 \cdot G$ is 4, and $\mathcal{C}$ has exactly one invariant characteristic.
    \end{enumerate}
\end{prop}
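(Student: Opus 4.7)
The plan is to treat this as a corollary of the preceding Proposition, translating the criterion ``the map $H^2(G, C_2) \to H^2(G_{P_1}, C_2)$ is surjective'' into the language of lifting orders that has been set up in the paragraphs immediately above. The three-case structure then follows from a clean dichotomy based on the parity of $n = \abs{H^2(G, \mathbb{C}^\times)}$ together with an analysis of the pullback extension $E^\prime = E \times_G G_{P_1}$.

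First I would dispose of case (1): if $n$ is odd, then the preceding Proposition already guarantees no invariant characteristics exist, so there is nothing more to show. For the even case I would invoke the observation made in \S\ref{sec: Hurwitz curves} that for every simple Hurwitz group in Table \ref{tab: finite simple Hurwitz groups} with $n$ even, the maximum power of $2$ dividing $n$ is exactly $2$; consequently the $2$-primary part of the Schur multiplier is precisely $C_2$, giving $H^2(G, C_2) \cong C_2$ for the purposes of the restriction analysis (this uses that $H^2(G, \mathbb{C}^\times)_2$ agrees with the relevant $2$-torsion contribution appearing in Equation~(\ref{eq: torsion Hurwitz sequence reduced})). Similarly, since $G_{P_1} \cong C_2$, we have $H^2(G_{P_1}, C_2) \cong C_2$, so the restriction is a map $C_2 \to C_2$, which is surjective if and only if it is nonzero.

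Next I would identify the restriction map with the pullback of central extensions, exactly as described in the paragraph preceding the statement. The nontrivial class in $H^2(G, C_2)$ corresponds to the unique double cover $E = 2 \cdot G$, and its restriction corresponds to the four-element group $E^\prime = \pbrace{(e,e),(\tilde{e},e),(\tilde{g},g),(\tilde{e}\tilde{g},g)}$. Picking any lift $\tilde{g} \in \pi^{-1}(g)$, the square $\tilde{g}^2$ lies in $\pi^{-1}(e) = \pbrace{e,\tilde{e}}$, and one checks that $E^\prime \cong V_4$ precisely when $\tilde{g}^2 = e$ (lifting order $2$, trivial extension class, restriction not surjective) and $E^\prime \cong C_4$ precisely when $\tilde{g}^2 = \tilde{e}$ (lifting order $4$, nontrivial extension class, restriction surjective). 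Feeding this dichotomy back into the criterion of the preceding Proposition then produces cases (2) and (3).

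The main obstacles to double-check are minor bookkeeping items rather than real difficulties: first, confirming that the lifting order is independent of the choice of $\tilde{g}$ (immediate, since the two lifts differ by the central element $\tilde{e}$ of order $2$, and their squares therefore coincide), and second, confirming that when $n$ is even and the lifting order is $4$ we genuinely obtain a \emph{unique} invariant characteristic rather than merely the existence of one --- this is already guaranteed by the general analysis in \S\ref{sec: Hurwitz curves}, where the structure theorem for $\Pic(\mathcal{C})^G$ together with $c$ odd forced $\Pic(\mathcal{C})^G_2 = 0$. With these two verifications the proof is complete.
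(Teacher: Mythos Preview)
Your proposal is correct and follows essentially the same argument as the paper: the proposition is stated immediately after the paragraphs that establish it, so the paper's ``proof'' is precisely the discussion you recapitulate --- reducing to the preceding Proposition, noting from Table~\ref{tab: finite simple Hurwitz groups} that the even-$n$ case has $H^2(G,C_2)\cong C_2$, interpreting the restriction map via the pullback extension $E'$, and reading off the $V_4$/$C_4$ dichotomy from the lifting order. Your additional bookkeeping remarks (independence of the choice of lift, uniqueness via $\Pic(\mathcal{C})^G_2=0$) are sound and make explicit what the paper leaves implicit.
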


We will now consider distinguish between the latter two cases for the curves in Table \ref{tab: finite simple Hurwitz groups}. 
\subsection{\secmath{\PSL_2(q)}}

First consider the case of $G = \PSL_2(q)$. The only even value of $q$ which gives a Hurwitz group is $q=8$, corresponding to the Fricke-Macbeath curve. In this case the Schur multiplier group is trivial, and so we have analytically shown that the Fricke-Macbeath curve has 0 invariant characteristics. 

Restrict then to the case when $q$ is odd. The generator of the group $$H^2(\PSL_2(q), C_2) \cong C_2$$ corresponds to the extension 
\[
1 \to C_2 \to \SL_2(q) \overset{\pi}{\to} \PSL_2(q) \to 1. 
\]
\begin{prop}
    The lifting order of any involution in $\PSL_2(q)$ to $\SL_2(q)$ is 4. 
\end{prop}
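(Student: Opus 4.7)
The plan is to unpack the definition of lifting order and argue by exclusion. Let $g \in \PSL_2(q)$ be an involution, and choose any preimage $\tilde{g} \in \SL_2(q)$ under $\pi : \SL_2(q) \to \PSL_2(q)$. Since $g^2 = 1$ in $\PSL_2(q)$, we have $\tilde{g}^2 \in \ker \pi = \{\pm I\}$, so the lifting order is either $2$ (when $\tilde{g}^2 = I$) or $4$ (when $\tilde{g}^2 = -I$). The claim is equivalent to ruling out the first case.

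The key observation is that for odd $q$ we have $1 \neq -1$ in $\mathbb{F}_q$, so the polynomial $x^2 - 1 \in \mathbb{F}_q[x]$ factors into distinct linear factors $(x-1)(x+1)$. Hence any $\tilde{g} \in \SL_2(q)$ with $\tilde{g}^2 = I$ has minimal polynomial dividing $(x-1)(x+1)$ and is therefore diagonalisable over $\overline{\mathbb{F}_q}$ with eigenvalues in $\{\pm 1\}$. The determinant condition $\det \tilde{g} = 1$ forces the two eigenvalues to multiply to $1$, so either both are $+1$ or both are $-1$, giving $\tilde{g} = \pm I$.

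But $\pi(\pm I) = 1 \in \PSL_2(q)$, which contradicts the assumption that $g$ is a nontrivial involution. Therefore $\tilde{g}^2 = -I$, and so the lifting order of $g$ is $4$.

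The argument is essentially an elementary observation about $2 \times 2$ matrices and has no real obstacle; the only subtlety to be careful about is that diagonalisability uses $\operatorname{char} \mathbb{F}_q \neq 2$, which is guaranteed since $q$ is odd (and for $q$ even, $\PSL_2(q) = \SL_2(q)$ so the extension is trivial, which matches why the case $q = 8$ gave trivial Schur multiplier and no invariant characteristic on the Fricke--Macbeath curve).
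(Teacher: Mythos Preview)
Your proof is correct and follows essentially the same route as the paper: both argue that any $\tilde{g} \in \SL_2(q)$ with $\tilde{g}^2 = I$ must be $\pm I$, hence projects to the identity in $\PSL_2(q)$, forcing the lifting order of a genuine involution to be $4$. The only cosmetic difference is in how that key fact is justified: the paper suggests writing out matrix entries directly, whereas you invoke separability of $x^2-1$ (using $\operatorname{char}\mathbb{F}_q \neq 2$) to get diagonalisability and then let the determinant constraint pin down the eigenvalues. Your phrasing is slightly cleaner and makes the role of odd characteristic more transparent, but the underlying argument is the same.
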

\begin{proof}
    One can show easily by exhaustively writing out entries that involutions in $\SL_2(q)$ are diagonal of the form $\diag(a, a^{-1})$ where $a^2=1$. There are $n$th roots of unity in $\operatorname{GF}(q)$ if and only if $n|q-1$, and in that event there are $\gcd(n, q-1)$ many roots. We are focusing on square roots of units, which will exist when $q$ is odd as we have, and then the $\gcd(2, q-1)=2$ square roots must be $\pm 1$. As such $\tilde{g}^2 = 1 \Rightarrow \tilde{g} = \pm 1 \Rightarrow g$ is the identity in $\PSL_2(q)$. Hence any involution must have lifting order 4.
\end{proof}
\begin{corollary}
    Hurwitz curves with automorphism group $\PSL_2(q)$, $q$ odd, have a unique invariant characteristic. 
\end{corollary}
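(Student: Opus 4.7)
The plan is to deduce the corollary directly by chaining the two immediately preceding results: the trichotomy characterising when a Hurwitz curve with simple automorphism group has a unique invariant characteristic, and the just-proved proposition that every involution of $\PSL_2(q)$ lifts to an order-4 element of $\SL_2(q)$.

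First, I would check we are in the even-multiplier branch of that trichotomy. Since $q$ is odd, $\gcd(2,q-1)=2$, so Table \ref{tab: finite simple Hurwitz groups} gives $H^2(\PSL_2(q),\mathbb{C}^\times) \cong C_2$, which is cyclic of even order. This rules out case (1). Next, I would identify the relevant double cover. Because $H^2(\PSL_2(q),C_2)\cong C_2$ has a unique nontrivial class, the canonical double cover $2\cdot \PSL_2(q)$ coincides with $\SL_2(q)$ under the standard projection $\pi:\SL_2(q)\to\PSL_2(q)$ (for the Hurwitz range $q\geq 7$, $\SL_2(q)$ is perfect and furnishes the universal central extension). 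Letting $g$ generate the stabiliser $G_{P_1}\cong C_2$ of a ramification point sitting over the order-$2$ branch point of the signature $(0;2,3,7)$, the preceding proposition gives that any lift $\tilde{g}\in\SL_2(q)$ satisfies $\tilde{g}^2=-I\neq I$, so $g$ has lifting order $4$.

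This places $\mathcal{C}$ squarely in case (3) of the preceding proposition, and therefore $\mathcal{C}$ admits exactly one $G$-invariant theta characteristic. The hard content is all absorbed into the two preceding results: the cohomological trichotomy and the explicit matrix argument about involutions in $\SL_2(q)$. The only step really worth flagging is ensuring that the double cover really is $\SL_2(q)$ rather than an isoclinic variant, but this is classical for $\PSL_2(q)$ with $q$ odd and $q\geq 5$.
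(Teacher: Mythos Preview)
Your argument is correct and matches the paper's approach: the corollary is stated without proof in the paper, being immediate from the preceding proposition (lifting order $4$ for every involution in $\PSL_2(q)$) together with the trichotomy for simple Hurwitz groups, and the paper has already identified $2\cdot\PSL_2(q)$ with $\SL_2(q)$ just before that proposition. Your extra remark about ruling out an isoclinic variant is more caution than the paper exercises, but does no harm.
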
 

\subsection{\secmath{A_n}}\label{sec: An invariant characteristics}

The alternating group $A_n$ is a Hurwitz group for all $n$ except 64 values in the range $1 \leq n \leq 167$ \cite{Conder1980}. The group $S_n$ has the $n$-dimensional permutation representation which splits into a sum of a 1-dimensional acting on vectors with equal entries and a $(n-1)$-dimensional representation acting on vectors whose entries sum to 0 (that is on an $n-1$ simplex), and so this gives an embedding $A_n \hookrightarrow \SO(n-1)$. Restricting to $n>4$, the nontrivial double cover of $\SO(n-1)$ is the well known group $\operatorname{Spin}(n-1)$, and so we get a nontrivial double cover of $A_n$ denoted $2\cdot A_n$ as 
\begin{center}
\begin{tikzcd}
    0\arrow[r] & C_2 \arrow[r]\arrow[d, "="] & 2 \cdot A_n\arrow[r]\arrow[d] & A_n \arrow[r]\arrow[d, hook, "\imath"] & 0 \\ 0\arrow[r] & C_2 \arrow[r] & \operatorname{Spin}(n-1) \arrow[r, "\pi"] & \SO(n-1) \arrow[r] & 0.
\end{tikzcd}
\end{center}
This is the nontrivial element of $H^2(A_n, C_2) \cong C_2$.

We can then reduce the question to asking about whether, given nontrivial $g \in \SO(n-1)$ such that $g^2 = 1$, under what conditions can we have $\tilde{g}^2 = 1$ where $\tilde{g} \in \operatorname{Spin}(n-1)$ is a lift of $g$. 

\begin{lemma}
    $\tilde{g}^2=1$ if and and only if $g$ is a product of $4k$ disjoint transpositions for some integer $k$. 
\end{lemma}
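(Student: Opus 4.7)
The plan is to pass to the Clifford algebra model of $\operatorname{Spin}(n-1)$ and compute $\tilde{g}^2$ directly. The embedding $A_n \hookrightarrow \SO(n-1)$ is given by the permutation action on the hyperplane $V = \pbrace{x \in \mathbb{R}^n \, | \, \sum_i x_i = 0}$, and a transposition $(i\,j)$ acts on $V$ as the orthogonal reflection along the unit vector $v_{ij} := (e_i - e_j)/\sqrt{2}$. Since $g$ lies in $A_n$, any involution $g$ is a product of an even number $2m$ of disjoint transpositions, and hence its image in $\SO(n-1)$ is the product of $2m$ reflections along the pairwise orthogonal vectors $v_{i_\ell j_\ell}$.

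Next I would realise $\operatorname{Spin}(n-1) \subset \operatorname{Cl}(n-1)$ using the standard Clifford relation $v^2 = -|v|^2$, so that the two lifts of the reflection along a unit vector $v$ are $\pm v$ and the nontrivial central element of the cover is $-1 \in \operatorname{Cl}(n-1)$. The lift $\tilde{g}$ is then $\pm\, v_{i_1 j_1} \cdots v_{i_{2m} j_{2m}}$; the square $\tilde{g}^2$ does not depend on this choice of sign, and the lifting order is $2$ exactly when that square equals $+1$.

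The core step is the evaluation of $(v_1 \cdots v_{2m})^2$ for pairwise orthogonal unit vectors, which is precisely the configuration produced by disjointness of the transpositions. Since orthogonal vectors anticommute in the Clifford algebra and each satisfies $v_\ell^2 = -1$, reordering the duplicated product $v_1 \cdots v_{2m} v_1 \cdots v_{2m}$ into $v_1^2 v_2^2 \cdots v_{2m}^2$ costs $\binom{2m}{2}$ anticommutations, giving $(v_1 \cdots v_{2m})^2 = (-1)^m$. The claimed equivalence $\tilde{g}^2 = 1 \iff m \text{ even} \iff 2m = 4k$ then follows immediately. No genuine obstacle arises: the only care needed is the Clifford sign bookkeeping, which is a standard one-line computation.
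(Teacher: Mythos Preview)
Your argument is correct and takes a genuinely different route from the paper's. The paper proceeds topologically: it diagonalises $g \in \SO(n-1)$ as a block matrix with $l$ blocks $\diag(-1,-1)$ (relating $l$ to the number of transpositions via the trace of the standard representation), interprets each block as a rotation by $\pi$, and then observes that squaring produces a loop in $\SO(n-1)$ which is the concatenation of $l$ full $2\pi$-rotations in mutually orthogonal planes. Since each such rotation represents the nontrivial class in $\pi_1(\SO(n-1)) \cong \mathbb{Z}_2$, the loop is contractible (equivalently $\tilde g^2 = 1$) precisely when $l$ is even. Your Clifford-algebra computation reaches the same conclusion by pure sign bookkeeping: writing $\tilde g$ as a product of $2m$ pairwise orthogonal unit vectors and evaluating $(v_1\cdots v_{2m})^2 = (-1)^{\binom{2m}{2}}(v_1^2\cdots v_{2m}^2) = (-1)^{m(2m-1)} = (-1)^m$. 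Your approach is algebraically cleaner and self-contained, needing no appeal to homotopy; the paper's approach makes the role of the fundamental group more visible and ties the result directly to the geometry of the double cover. The paper also notes that the statement appears in the literature (their reference to Bailey, Proposition~5.10), so either proof is acceptable.
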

\begin{proof}
This is proven in \cite[Proposition 5.10]{Bailey2002}, though we shall take a different approach here. Using the geometric interpretation of multiplication in $\operatorname{Spin}(n-1)$ as coming from lifts of paths in $\SO(n-1)$, we see that if we take a path $\gamma : [0,1] \to \SO(n-1)$ such that $\gamma(0)=1$, $\gamma(1)=g$, then $\gamma^2 : [0,1] \to \SO(n-1)$ is a path with $\gamma^2(0) = 1 = g^2 = \gamma^2(1)$, such that when we lift this to a path $\widetilde{\gamma^2} : [0,1] \to \operatorname{Spin}(n-1)$ with $\widetilde{\gamma^2}(0)=1$, $\tilde{g}^2 := \widetilde{\gamma^2}(1) \in \pi^{-1}(1)$. This tells us that $\tilde{g}^2 = 1$ precisely when the path $\gamma^2$ is contractible. Now we may diagonalise $g$ as 
\[
g = \diag(\underbrace{-1}_{\times 2l}, \underbrace{1}_{\times m}),
\]
where we must have $2l+m = n-1$ and $m-2l = \Tr(g)$, which together determine $4l=n-1-\Tr(g)$ and $2m=n-1+\Tr(g)$. We can work out $\Tr(g)$ using some representation theory when $g$ corresponds to an element of $A_n$. Recall that in the permutation representation $\chi(g) = \abs{\operatorname{Fix}(g)}$ is the number of fixed points of $g$. Because the permutation representation decomposes as $ \underline{n-1} \oplus \underline{1}$ we must have $\Tr(g) = \abs{\operatorname{Fix}(g)}-1$. 
Thus $\abs{\operatorname{Fix}(g)} = n - 4l$.
As $g$ is an involution, its cycle type will consist of just transpositions, and so 
$\abs{\operatorname{Fix}(g)} = n - 2(\#\text{transpositions})$; indeed, as $g \in A_n$ it must be a product of an even number of transpositions.
We have therefore that $g$ consists of $2l$
transpositions.

Now each $\diag(-1, -1)$ block in $g$ corresponds to a rotation of a disjoint plane by $\pi$ radians, and the path in $\SO(n-1)$ corresponding to the rotation about this axis by an angle increasing from $0$ to $2\pi$ is non-contractible. The fundamental group of $\SO(n-1)$ is $\pi_1(\SO(n-1)) = \mathbb{Z}_2$, so a composition of non-contractible loops will be contractible if there are an even number of them. Hence the overall path $\gamma^2$ will be contractible if and only if $l$ is even. 
\end{proof}
\begin{corollary}
    The lifting order of $g$ is 4, and hence the corresponding $A_n$ Hurwitz curve has a UIC, if and only if $l$ is odd, when $g$ is written as the product of $2l$ disjoint $2$-cycles.
\end{corollary}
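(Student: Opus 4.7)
The plan is to derive this as a direct consequence of the lemma immediately preceding it together with the earlier proposition characterising when a Hurwitz curve with simple automorphism group has a UIC.

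First I would recall that the preceding lemma shows $\tilde{g}^2 = 1$ if and only if $g$ is a product of $4k$ disjoint transpositions, where $g$ is written as the product of $2l$ disjoint transpositions (so the lemma is saying this happens iff $l$ is even). Taking the contrapositive, the lift $\tilde{g}$ fails to square to the identity precisely when $l$ is odd. Since $\tilde{g}$ must lie above the involution $g$, its order divides $4$ and is at least $2$, so $\tilde{g}^2 \ne 1$ is equivalent to $\tilde{g}$ having order $4$, i.e.\ to $g$ having lifting order $4$ in the double cover $2 \cdot A_n$.

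Next I would invoke the proposition from \S\ref{sec: Hurwitz curves} classifying the three cases for a Hurwitz curve $\mathcal{C}$ with simple automorphism group $G$. For $G = A_n$ (with $n > 4$ so that $A_n$ is simple and the double cover $2 \cdot A_n$ is the nontrivial central extension), the Schur multiplier $H^2(A_n, \mathbb{C}^\times) \cong C_2$ has even order, so only the lifting order of a generator $g$ of the stabiliser $G_{P_1} \cong C_2$ discriminates between cases (2) and (3). By that proposition, case (3) — a unique invariant characteristic — occurs precisely when the lifting order of $g$ equals $4$.

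Combining these two observations gives that $\mathcal{C}$ has a UIC if and only if $l$ is odd. No substantive obstacle arises: this is a bookkeeping corollary that collects the lemma and the earlier Hurwitz-curve proposition. The only thing worth a brief remark is the exclusion of small $n$ (to ensure $2 \cdot A_n$ really is the nontrivial Schur cover and that $A_n$ is a simple Hurwitz group at all), but these cases fall outside the hypothesis in any event.
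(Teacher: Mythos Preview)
Your proposal is correct and follows exactly the approach the paper intends: the corollary is an immediate consequence of the preceding lemma (which characterises $\tilde{g}^2=1$ in terms of the number of transpositions) together with the earlier proposition on simple Hurwitz groups, and the paper provides no separate proof beyond this. Your remark about excluding small $n$ is also apt, since the smallest Hurwitz alternating group is $A_{15}$, well past the exceptional Schur multipliers at $n=6,7$.
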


We may give a relatively simple criterion to identify the two cases. In particular, when $A_n$ is a Hurwitz group let $\gamma_i$, $i=1,2,3$, be the generators of order $2,3,7$ respectively in the corresponding generating vector, and denote $f_i = \abs{\operatorname{Fix}(\gamma_i)}$. Then \cite{Conder1980, Conder1984} there exists $h \in \mathbb{Z}_{\geq 0}$ such that 
\[
n = 84(h-1) + 21f_1 + 28f_2 + 36f_3. 
\]
Using that $n-f_1 = 4l$, we get a simple lemma.
\begin{lemma} 
The parity of $l$ is determined by 
\[
2 | l \Leftrightarrow h-1 + f_1 + f_2 + f_3 \equiv 0 \mod 2. 
\]
\end{lemma}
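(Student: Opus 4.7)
The plan is to make this a direct substitution and arithmetic check: the formulas $n = 84(h-1) + 21 f_1 + 28 f_2 + 36 f_3$ and $n - f_1 = 4l$ already give us everything we need, so the lemma should follow from a single reduction mod 2 once we isolate $l$.

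Concretely, I would first substitute the expression for $n$ into $4l = n - f_1$, obtaining
\[
4l \;=\; 84(h-1) + 20 f_1 + 28 f_2 + 36 f_3.
\]
Since every coefficient on the right is divisible by 4, I can safely divide through (no integrality issue arises: the left-hand side being divisible by 4 is automatic, and the right-hand side is manifestly so), giving
\[
l \;=\; 21(h-1) + 5 f_1 + 7 f_2 + 9 f_3.
\]

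Finally, reducing mod 2 (each of $21$, $5$, $7$, $9$ is odd) yields
\[
l \;\equiv\; (h-1) + f_1 + f_2 + f_3 \pmod{2},
\]
from which $2 \mid l$ if and only if $(h-1) + f_1 + f_2 + f_3 \equiv 0 \pmod 2$, as claimed.

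There is no real obstacle here; the lemma is essentially a repackaging of Conder's counting formula modulo 2. The only thing worth double-checking is that the formula for $n$ is being used with the correct normalisation (i.e.\ that $f_i$ genuinely denotes $|\operatorname{Fix}(\gamma_i)|$ in the permutation action on $n$ points, as stated just before the lemma), so that the identification $n - f_1 = 4l$ — which came from the cycle-type analysis of the involution $\gamma_1 \in A_n$ — is compatible with the $f_1$ appearing in Conder's formula. Given that compatibility, the proof is complete.
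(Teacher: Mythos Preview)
Your proof is correct and is exactly the intended argument: the paper does not spell out a proof, simply noting that the lemma follows from $n - f_1 = 4l$ combined with Conder's formula $n = 84(h-1) + 21f_1 + 28f_2 + 36f_3$, and your substitution and reduction mod 2 is precisely how one fills in that line.
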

The five smallest values of $n$ for which we get that $A_n$ is a Hurwitz group are found in \cite{EtayoGordejuela2005} and yield Table \ref{tab: UIC for An} giving the number of invariant characteristics $I$.

\begin{longtable}{c|c}\caption{First five $A_n$ Hurwitz groups and the number of invariant characteristics} \\
 $(n; h, f_1, f_2, f_3)$ & $I = h-1+f_1+f_2 + f_3 \mod 2$ \\ \hline \hline
 $(15; 0, 3, 0, 1)$ & 1 \\
 $(21; 0, 1, 3, 0)$ & 1 \\
 $(22; 0, 2, 1, 1)$ & 1 \\
 $(28; 0, 4, 1, 0)$ & 0 \\
 $(29; 0, 1, 2, 1)$ & 1\label{tab: UIC for An}
 \end{longtable}

This shows that the question of whether an $A_n$ Hurwitz curve has a UIC depends on $n$ in a way that is not immediately clear. More research is required to determine the structure of the involution which generates $A_n$ as a Hurwitz group. 

\subsection{Sporadic Examples}\label{sec: sporadic examples}

It remains to determine the number of invariant characteristics for the sporadic simple groups 
$J_2$, $Ru$ and $Fi_{22}$. The Atlas \cite{Conway1985} records the lifting orders of conjugacy classes of $G$ to the double cover $2 \cdot G$, and so we need only to identify the conjugacy class of the generator of $G_{P_1}$. 
\begin{itemize}
    \item $J_2$ has two conjugacy classes of involutions, $2A$ and $2B$, with lifting order 2 and 4 respectively in $2\cdot J_2$. It is known that the $2B$ conjugacy class is that which generated $J_2$ as a Hurwitz group \cite{Finkelstein1973, Broughton1991}, and so a $J_2$ Hurwitz curve has a UIC. 
    \item $Ru$ has two conjugacy classes of involutions, $2A$ and $2B$, which have lifting order 2 and 4 respectively in $2\cdot Ru$. It is shown in \cite{Darafsheh2003} that $Ru$ is $(2B, 3, 7)$-generated, but they do not answer the question of whether it is $(2A, 3, 7)$-generated. Using the ``Brauer trick" argument of \cite{Broughton1987} with respect to the character $\chi_4$ of \cite{Conway1985} shows that in fact $Ru$ is not $(2A, 3, 7)$-generated, and so any Hurwitz curve with automorphism group $Ru$ has a UIC. 
    \item $Fi_{22}$ has three conjugacy classes of involutions, but all of them have lifting order $2$ in $2 \cdot Fi_{22}$, and so any Hurwitz curve with automorphism group $Fi_{22}$ has no invariant theta characteristics. 
\end{itemize}

\begin{remark}
    Had the lifting orders not been computed in the Atlas, we would have been able to work them out by taking a presentation of the group in an appropriate alternating group, and then applying the method of \S\ref{sec: An invariant characteristics}. 
\end{remark}

\subsection{Comparison to Machine Prediction}\label{sec: comparison to prediction}

We can now make a copy of Table \ref{tab: invariants prediction} which gives also the true value of $I$. We see that the machine estimation had an accuracy of $9/14 \approx 64\%$, with zero false positives. It is not surprising that the machine classification performed poorly on the subset of simple Hurwitz groups, as these are acting in a range of genera and signatures poorly represented in the training dataset. 

\begin{longtable}
{p{0.16\linewidth}|p{0.16\linewidth}|p{0.16\linewidth}|p{0.16\linewidth}}
\caption{Machine prediction of whether $I=1$, all simple Hurwitz groups of order $<10^6$, and true value of $I$} \\
    $G$ & $g$ & $I=1$ prediction & $I$ \\ \hline \hline 
    $\PSL_2(7)$ & 3 & True & 1\\
    $\PSL_2( 8)$ & 7 & False & 0\\
    $\PSL_2( 13)$ & 14 & True & 1\\
    $\PSL_2( 27)$ & 118 & True & 1\\
    $\PSL_2( 29)$ & 146 & True & 1\\
    $\PSL_2( 41)$ & 411 & False & 1\\
    $\PSL_2( 43)$ & 474 & True & 1\\
    $J_1$ & 2091 & False & 0\\
    $\PSL_2( 71)$ & 2131 & False & 1\\
    $\PSL_2( 83)$ & 3404 & True & 1\\
    $\PSL_2( 97)$ & 5433 & False & 1\\
    $J_2$ & 7201 & False & 1\\
    $\PSL_2( 113)$ & 8589 & False & 1\\
    $\PSL_2( 125)$ & 11626 & True & 1
\label{tab: invariants prediction answers}
\end{longtable}

A surprising fact that we have encountered so far is that the number of invariant characteristics on a Hurwitz curve just depends on the group and not its generating vector, i.e. the topological type of its action. It is known (see for example \cite{Conder1987}) that the Hurwitz groups can have topologically distinct generating vectors.

\section{Outlook}

In this work we've laid the groundwork for a more developed theory of invariant characteristics, moving beyond the work of \cite{Atiyah1971, Kallel2010, Beauville2013} which considered only those invariant under a single automorphism. We have approached this from two separate directions: by rephrasing the question of invariant characteristics in terms of group cohomology of an affine action; and by understanding the whole group of invariant line bundles. At present, it is not clear how the two may be linked, and clarifying this will undoubtedly serve to help the development of theory. 

At present, sufficient conditions for unique invariant characteristics have been found, but not any strong necessary conditions. In our current use of the group cohomology method, we have not utilised much of the theory of the rational representation, and it seems likely that better understanding this will be helpful. This is especially pressing as we provide examples through our computation of curves for which the SOC property does not explain the existence of a UIC, though the property is enough to cover the majority of curves seen.

With regards to Hurwitz curves with a simple automorphism group, though we have provided a criterion for checking whether certain generating vectors of $A_n$ Hurwitz curves yield a UIC, it is not clear whether a general statement can be made. Further research should go into investigating what generating vectors of $A_n$ as a Hurwitz group can occur. 

For future work, one could also generalise to consider \bam{$r$-spin structures}, which are $r$th-roots of the canonical bundle. These exist when $r | 2g-2$, and in principle our methods can be extended to study the orbits of these.

\appendix

\section{Tables of Orbits}\label{sec: tables of orbits}

In this appendix we provide all the tables of orbit decompositions described in \S\ref{sec: computations of orbit tables}. We shall use the notation of \cite{Bars2012, Badr2016} that $L_d(x, y, \dots)$ is a generic homogeneous degree-$d$ polynomial in the arguments. Moreover, we shall make some comment about the completeness of the data.
\begin{itemize}
    \item The complete list of genus-2 curves with nontrivial reduced automorphism group comes from \cite{Bolza1887}.
    \item The complete list of non-hyperelliptic genus-3 curves with nontrivial (reduced) automorphism group comes from \cite{Bars2012}. Bars attributes the first work completing this to Henn in 1976, but Wiman appears to have completed the calculation earlier in \cite{Wiman1895a}. Bars and Dolgachev disagree on the automorphism group of the curve given by $f = 1 + y^4 + x^4 + a(x^2 + y^2) + bx^2 y^2$; using Sage we find agreement with Bars. 
    \item The list of hyperelliptic curves with many automorphisms comes from \cite{Shaska2007, Muller2021}. The latter reference will also be used for higher genera. 
    \item \cite[Table 4]{Shaska2007} was used to verify the signatures of the non-hyperelliptic actions.
    \item The complete list of non-hyperelliptic genus-4 curves with nontrivial (reduced) automorphism group comes from \cite{Wiman1895}. Wiman distinguishes his curves by whether they lie on the nonsingular quadric or the cone, and we have followed this putting those that lie on the cone in the first portion of the table. For the curves which lie on the nonsingular quadric Wiman described the curve by providing the quadric and cubic, hence one must use the resultant to get a single plane equation, and this may require a projective linear transformation to find a nondegenerate coordinate system. We find a typo in Wiman's curve (8) with octahedral symmetry.
    \item Above genus 4 we are unaware of any complete lists classifying curves by their automorphism groups and giving plane models of the curves. Curves of genus $(d-1)(d-2)/2$ for $d \geq 3$ stand out because of the Pl\"ucker formula. In order to implement the numerical method for computing the rational representation it is also necessary to have a model of the curve with coefficients in $\mathbb{Q}$, and so this further limits the possible curves we may investigate. 
    \item As the LMFDB does not contain the data of signatures with quotient genus $>0$ at genera $>4$, we shall sometimes omit the signature of the action where it is unknown. 
    \item The first non-hyperelliptic curve of genus 5 used in Table \ref{tab: orbit decomposition, genus 5} is the family of Humbert curves given in \cite[(5.9)]{Kani1989}, the second from \cite{Swinarski2016}, and the third comes from taking resultants of the polynomials provided in \cite{Wiman1895}, hence one may choose to call it the Wiman octic for want of a better name. 
    \item The non-hyperelliptic genus-6 curves come from a list of all nonsingular plane quintics in \cite{Badr2016}, with the exception of the curve with automorphism group $(48, 15)$ which is from \cite{Swinarski2016}. The curve with automorphism group of order 150 is the Fermat quintic curve which has maximal automorphism group for a genus-6 curve \cite{Magaard2002}, the curve with automorphism group $S_5$ is the Wiman sextic \cite{Wiman1895, Edge1981a}, and the curve with automorphism group of order 72 is given in the LMFDB with label 6.72-15.0.2-4-9.1. The curve with automorphism group of order 39 is attributed to Snyder in \cite[p.~464]{Lefschetz1921a}, where it is constructed in a manner similar to Klein's curve. 
    \item The curve with automorphism group $\PSL_2(\mathbb{F}_8)$ is the Fricke-Macbeath curve \cite{Fricke1899, Macbeath1965}, the unique Hurwitz curve of genus 7, the rational plane model of which is a attributed to Brock in \cite{Hidalgo2017}. The remaining curves come from \cite[Table 2, Table 5]{Zomorrodian2010}. Table 5 in Zomorrodian gives all possible automorphism groups of non-hyperelliptic genus-7 curves where $\abs{\Aut} < 65$, and a plane curve form for each; this list contains some errors, for example a typo in curve 4 and the fact that curve 8 is hyperelliptic (as checked with Maple \cite{Maple2022}).
    \item We are unaware of any plane models of non-hyperelliptic curves of genus 8 and so examples in this genus are sadly missing; one can in principle get such models from the methods of \cite{Shimura1995}, using Sage's modular symbol functionality, but in practice the process of going from a canonical embedding to a plane model becomes infeasible. Likewise, one could use the methods of \cite{Swinarski2016} to get the canonical embedding, but the problem of finding a plane form from this remains. 
    \item The genus-9 curve with $\Autb=S_5$ is the Fricke octavic curve, defined in \cite{Edge1984}, constructed similarly to Bring's curve in $\mathbb{P}^3$ and so a plane form of the curve is found using resultants and a judicious choice of projective transformation to find a well conditioned coordinate system.
    \item The genus-9 curve with automorphism group of order 57 is a generalisation of Klein's curve and Snyder's curve \cite[p.~464]{Lefschetz1921a}.
\end{itemize}


\begin{longtable}
{p{0.37\linewidth}|p{0.16\linewidth}|p{0.16\linewidth}|p{0.16\linewidth}|p{0.04\linewidth}}
\caption{Orbit decomposition, all non-hyperelliptic and hyperelliptic genus-3 curves} \\
    $f$ & $\Autb$, $\bm{c}$ & Odd & Even & I\\ \hline \hline 
    $1 + L_2(x,y) + L_4(x,y)$ & $C_2$, $(1; 2^4)$ & $1_4$, $2_{12}$ & $1_{12}$, $2_{12}$ & 16 \\
    $L_1(x,y) + L_3(x,y)$ & $C_3$, $(0; 3^5)$ & $1_1$, $3_9$ & $3_{12}$ & 1 \\
    $1 + y^4 + x^4 + (ay^2 + bx^2) + cx^2 y^2$ & $V_4$, $(0; 2^6)$ & $2_6, 4_4$ & $1_8, 2_6, 4_4$ & 8 \\
    $bx^2 y^2 + x^3 + y^3 + axy + 1$ & $S_3$, $(0; 2^4, 3)$ & $1_1, 3_3, 6_3$ & $1_3, 3_9, 6_1$ & 4 \\
    $y^4 + x^3 + ay^2 + 1$ & $C_6$, $(0; 2, 3^2, 6)$ & $1_1, 3_1, 6_4$ & $3_4, 6_4$ & 1 \\
    $1 + y^4 + x^4 + a(x^2 + y^2) + bx^2y^2$ & $D_4$, $(0; 2^5)$ & $4_5, 8_1$ & $1_4, 2_4, 4_4, 8_1$ & 4 \\
    $xy^3 + x^3 + 1$ & $C_9$, $(0; 3, 9^2)$ & $1_1, 9_3$ & $9_4$ & 1 \\
    $y^4 + ay^2 + x^4 + 1$ & $( C_4~\times~C_2 ) \rtimes C_2 {\cong (16, 13)}$, $(0; 2^3, 4)$ & $4_1, 8_3$ & $2_6, 8_3$ & 0 \\
    $1 + y^4 + x^4 + a(y^2 + x^2 + y^2x^2)$ & $S_4$, $(0; 2^3, 3)$ & $4_1, 12_{2}$ & $1_{2}$, $3_{2}$, $4_1$, $6_2$, $12_1$ & 2 \\
    $x^4 + y^4 + x$ & $((C_4 \times C_2) \rtimes {C_2) \rtimes C_3}$ ${\cong (48, 33)}$, $(0; 2, 3, 12)$ & $4_1, 24_1$ & $6_2, 24_1$ & 0 \\
    $y^4 + x^4 + 1$ & $(C_4^2~\rtimes~C_3) \rtimes C_2 {\cong (96, 64)}$, $(0; 2, 3, 8)$ & $12_1, 16_1$ & $4_2, 12_1, 16_1$ & 0 \\
    $xy^3 + x^3 + y$ & $\PSL_3(\mathbb{F}_2)$, $(0; 2, 3, 7)$ & $28_1$ & $1_1, 7_2, 21_1$ & 1 \\ \hline
    $y^{2}- (x^{8} +a x^{6} +b x^{4} +c x^{2} + 1)$ & $C_2$, $(1; 2^4)$ & $1_{4}$, $2_{12}$ & $1_{12}$, $2_{12}$ & $16$ \\
    $y^2 -x(x^2-1)(x^4 + ax^2 + b)$ & $C_2$ & $1_{4}$, $2_{12}$ & $1_{4}$, $2_{16}$ & $8$ \\
    $y^2 - (x^4 + ax^2 + 1)(x^4 + bx^2 + 1)$ & $V_4$, $(0; 2^6)$ & $2_{6}$, $4_{4}$ & $1_{8}$, $2_{6}$, $4_{4}$ & $8$ \\
    $y^2 - (x^4 - 1)(x^4 + ax^2 + 1)$ & $V_4$ & $1_{2}$, $2_{3}$, $4_{5}$ & $1_{2}$, $2_{7}$, $4_{5}$ & $4$ \\
    $y^2 - x(x^6 + ax^3 + 1)$ & $S_3$, $(0; 2^4, 3)$ & $1_{1}$, $3_{3}$, $6_{3}$ & $1_{3}$, $3_{9}$, $6_{1}$ & $4$ \\
    $y^2 - (x^8 + ax^4 + 1)$ & $D_4$, $(0; 2^2, 4^2)$ & $4_{5}$, $8_{1}$ & $1_{4}$, $2_{4}$, $4_{4}$, $8_{1}$ & $4$ \\
    $y^{2} - (x^7 - 1)$ & $C_7$, $(0; 7^3)$ & $7_{4}$ & $1_{1}$, $7_{5}$ & $1$ \\
    $y^{2} - x(x^{6} - 1)$ & $D_6$ & $1_{1}$, $3_{1}$, $6_{2}$, $12_{1}$ & $1_{1}$, $2_{1}$, $3_{1}$, $6_{5}$ & $2$ \\
    $y^{2} - (x^{8} - 1)$ & $D_8$ & $4_{1}$, $8_{3}$ & $1_{2}$, $2_{1}$, $4_{2}$, $8_{3}$ & $2$ \\
    $ y^2 - (x^8 + 14x^4 + 1)$ & $S_4$, $(0; 3, 4^2)$ & $4_1, 12_{2}$ & $1_{2}$, $3_{2}$, $4_1$, $6_2$, $12_1$ & 2 
\label{tab: orbit decomposition, genus 3}
\end{longtable}

\begin{longtable}
{p{0.37\linewidth}|p{0.16\linewidth}|p{0.16\linewidth}|p{0.16\linewidth}|p{0.04\linewidth}}
\caption{Orbit decomposition, all non-hyperelliptic genus-4 curves stratified by whether the corresponding quadric is singular, and separately some hyperelliptic genus-4 curve with many automorphisms} \\
$f$ & $\Autb$ & Odd & Even & I\\ \hline \hline 
$y^3 + y(ax^4 + bx^2 + c) + (dx^6 + ex^4 + fx^2 + g)$ & $C_2$, $(1; 2^6)$ & $1_{24}$, $2_{48}$ & $1_{40}$, $2_{48}$ & $64$ \\
$y^3 + y(ax^{4} + b x^{2} + c )+ d x(x^{4} + e x^{2} + f) $ & $C_2$, $(2; 2^2)$ & $2_{60}$ & $1_{16}$, $2_{60}$ & $16$ \\
$y^3 + y[a(x^{4} + 1) + b x^{2} ] + x[c(x^{4} + 1) + d x^{2}] $ & $V_4$, $(1; 2^3)$ & $2_{24}$, $4_{18}$ & $1_{16}$, $2_{24}$, $4_{18}$ & $16$ \\
$y^3 + y[a(x^{4} + 1) + b x^{2}] + x(x^{4} - 1)$ & $V_4$, $(1; 2^3)$ & $4_{30}$ & $1_{4}$, $2_{18}$, $4_{24}$ & $4$ \\
$y^3 + a yx^2 + x(x^{4} + 1) $ & $D_4$, $(0; 2^4, 4)$ & $4_{12}$, $8_{9}$ & $1_{4}$, $2_{6}$, $4_{18}$, $8_{6}$ & $4$ \\
$y^3 + y(x^4 + a) + (b x^{4} + c) $ & $C_4$, $(0; 2, 4^4)$ & $1_{4}$, $2_{10}$, $4_{24}$ & $1_{4}$, $2_{18}$, $4_{24}$ & $8$ \\
$y^3 + ayx^{2} + (x^{6} + b x^{3} + 1)$ & $S_3$, $(0; 2^6)$ & $1_{6}$, $3_{18}$, $6_{10}$ & $1_{10}$, $3_{30}$, $6_{6}$ & $16$ \\
$y^3 + a yx^2+ (x^{6} + 1)$ & $D_6$, $(0; 2^5)$ & $2_{3}$, $6_{13}$, $12_{3}$ & $1_{4}$, $2_{3}$, $3_{12}$, $6_{9}$, $12_{3}$ & $4$ \\
$y^3 + y( a x^{3} + b) + (x^{6} + c x^{3} + d) $ & $C_3$, $(1; 3^3)$ & $1_{3}$, $3_{39}$ & $1_{1}$, $3_{45}$ & $4$ \\
$y^3 + a y (x^{3} + 1) + (x^{6} + 20 x^{3} - 8)$ & $A_4$, $(0; 2, 3^3)$ & $4_{3}$, $12_{9}$ & $1_{1}$, $3_{1}$, $6_{6}$, $12_{8}$ & $1$ \\
$y^3 + ay + (x^{6} + b )$ & $C_6$, $(0; 2, 6^3)$ & $1_{3}$, $3_{7}$, $6_{16}$ & $1_{1}$, $3_{13}$, $6_{16}$ & $4$ \\
$y^{3} + y + x^{6}$ & $C_{12}$, $(0; 4, 6, 12)$ & $1_{1}$, $2_{1}$, $3_{1}$, $6_{3}$, $12_{8}$ & $1_{1}$, $3_{1}$, $6_{6}$, $12_{8}$ & $2$ \\
$y^3 + a y + (x^{5} + b) $ & $C_5$, $(0; 5^4)$ & $5_{24}$ & $1_{1}$, $5_{27}$ & $1$ \\
$y^3 + y +x^{5}$ & $C_{10}$, $(0; 5, 10^2)$ & $10_{12}$ & $1_{1}$, $5_{3}$, $10_{12}$ & $1$ \\
$y^3 - (x^{6} + ax^{5} +b x^{4} +c x^{3} +d x^{2} +e x +f ) $ & $C_3$, $(0; 3^6)$ & $3_{40}$ & $1_{1}$, $3_{45}$ & $1$ \\
$y^3 -( x^{6} + ax^{4} + b x^{2} + 1)$ & $C_6$, $(0; 2^2, 3^3)$ & $3_{8}$, $6_{16}$ & $1_{3}$, $3_{13}$, $6_{16}$ & $1$ \\
$y^3 -x(x^{4} +a x^{2} + 1)$ & ${C_6 \times C_2}$, $(0; 2^2, 3, 6)$ & $6_{8}$, $12_{6}$ & $1_{1}$, $3_{5}$, $6_{8}$, $12_{6}$ & $1$ \\
$y^3 -(x^{6} + ax^{3} + 1)$ & ${C_3 \times S_3}$, $(0; 2^2, 3^2)$ & $3_{2}$, $6_{1}$, $9_{6}$, $18_{3}$ & $1_{1}$, $3_{3}$, $9_{10}$, $18_{2}$ & $1$ \\
$y^3-(x^{5} + 1)$ & $C_{15}$, $(0; 3, 5, 15)$ & $15_{8}$ & $1_{1}$, $15_{9}$ & $1$ \\
$y^3-(x^{6} + 1)$ & ${C_6 \times S_3}$, $(0; 2, 6^2)$ & $6_{2}$, $18_{4}$, $36_{1}$ & $1_{1}$, $3_{1}$, $6_{1}$, $9_{4}$, $18_{3}$, $36_{1}$ & $1$ \\
$y^3-x(x^{4} + 1)$ & ${C_3 \times S_4}$, $(0; 2, 3, 12)$ & $12_{2}$, $24_{1}$, $36_{2}$ & $1_{1}$, $3_{1}$, $12_{2}$, $18_{2}$, $36_{2}$ & $1$ \\ \hline
$y^4(x+1) +y^3 (x^2 + ax + 1)+ y^2[b(x^{3} +1) + cx(x+1)] + y[d x(x^{2} + 1) + ex^2] + f x^{2}(x+1) $ & $C_2$, $(1; 2^6)$ & $1_{24}$, $2_{48}$ & $1_{40}$, $2_{48}$ & $64$ \\
$y^6 + y^{4}(x^2 + ax + 1) + y^2x(d x^{2} + b x +e) + c x^{3} $ & $C_2$, $(2; 2^2)$ & $2_{60}$ & $1_{16}$, $2_{60}$ & $16$ \\
$y^6 + y^{4}(x^2 + ax + 1) + y^{2}x[d(x^2 + 1) + b x] + c x^{3}$ & $V_4$, $(1; 2^3)$ & $2_{24}$, $4_{18}$ & $1_{16}$, $2_{24}$, $4_{18}$ & $16$ \\
$y^{6} + y^2[c x (xy^2 + 1) + b (x^{3} + y^{2}) + ax( y^{2} + x)] + x^{3} $ & $V_4$, $(1; 2^3)$ & $4_{30}$ & $1_{4}$, $2_{18}$, $4_{24}$ & $4$ \\
$b y^2(y^2 - x) (x^{2}-1) - y^{6} - axy^2( y^{2} + x^{2}) - x^{3} $ & $D_4$, $(0; 2^4, 4)$ & $4_{12}$, $8_{9}$ & $1_{4}$, $2_{6}$, $4_{18}$, $8_{6}$ & $4$ \\
$y^6 + ay^{3} (x^{3}+1) + b x y^{4} + c x^{2} y^{2} + x^{3}$ & $S_3$, $(0; 2^6)$ & $1_{6}$, $3_{18}$, $6_{10}$ & $1_{10}$, $3_{30}$, $6_{6}$ & $16$ \\
$y^6 + a y^{3}(x^3 + 1) + b x y^{4} + b x^{2} y^{2} + x^{3}$ & $D_6$, $(0; 2^5)$ & $2_{3}$, $6_{13}$, $12_{3}$ & $1_{4}$, $2_{3}$, $3_{12}$, $6_{9}$, $12_{3}$ & $4$ \\
$by^2(y^2-x)(x^2-1) - (y^2+x)^3 $ & $S_4$, $(0; 2^3, 4)$ & $4_6$, $12_6$, $24_1$ & $1_4$, $4_6$, $6_6$, $12_6$ & 4 \\
$x^{3} y^{3} + y^{6} + (a+b+1) y^2(x^{3} - y^{3}) + (ab+a+b) y(x^{3} - y^{3}) + ab( x^{3} - y^{3})$ & $S_3$, $(0; 2^2, 3^3)$ & $6_{20}$ & $1_{1}$, $3_{15}$, $6_{15}$ & $1$ \\
$y^4(a+y^2) + x^3(1 + ay^2)$ & $D_6$, $(0; 2^2, 3, 6)$ & $12_{10}$ & $1_{1}$, $3_{3}$, $6_{13}$, $12_{4}$ & $1$ \\
$x^{3} y^{3} + y^{6} + a x^{3} + y^{3}$ & ${S_3 \times S_3}$, $(0; 2^3, 3)$ & $6_{2}$, $18_{6}$ & $1_{1}$, $3_{6}$, $9_{9}$, $36_{1}$ & $1$ \\
$x^{3} y^{3} + y^{6} - x^{3} + y^{3}$ & $(S_3 \times S_3) \rtimes C_2 {\cong (72, 40)}$, $(0; 2, 4, 6)$ & $12_{1}$, $36_{3}$ & $1_{1}$, $6_{3}$, $9_{3}$, $18_{3}$, $36_{1}$ & $1$ \\
$x^{2} y^{3} + y^{4} + a^5 x^{3} + x y$ & $D_5$, $(0; 2^2, 5^5)$ & $10_{12}$ & $1_{1}$, $5_{15}$, $10_{6}$ & $1$ \\
$xy - x^{3}+ y^{4} + x^{2} y^{3} $ & $S_5$, $(0; 2, 4, 5)$ & $20_{3}$, $60_{1}$ & $1_{1}$, $5_{3}$, $10_{3}$, $30_{3}$ & $1$ \\ \hline 
$ y^{2} - (x^{9} - 1)$ & $C_9$, $(0; 9^3)$ & $3_{1}$, $9_{13}$ & $1_{1}$, $9_{15}$ & $1$ \\
$ y^2 - x (x^4-1) (x^4 + 2i\sqrt{3}x^2 + 1)$ & $A_4$ & $4_3$, $6_4$, $12_7$ & $4_1$, $6_4$, $12_9$ & 0 \\
$y^2 -x(x^{8} - 1)$ & $D_8$ & $8_{5}$, $16_{5}$ & $2_{2}$, $4_{1}$, $8_{10}$, $16_{3}$ & $0$ \\
$y^2 - (x^{10} - 1)$ & $D_{10}$ & $10_{4}$, $20_{4}$ & $1_{1}$, $5_{3}$, $10_{8}$, $20_{2}$ & $1$
\label{tab: orbit decomposition, genus 4}
\end{longtable}


\begin{longtable}{p{0.37\linewidth}|p{0.16\linewidth}|p{0.16\linewidth}|p{0.16\linewidth}|p{0.04\linewidth}}
\caption{Orbit decomposition, three non-hyperelliptic genus-5 curves, and separately all hyperelliptic genus-5 curves with many automorphisms} \\
    $f$ & $\Autb$, $\bm{c}$ & Odd & Even & I\\ \hline \hline 
    $y^{4} -4 (x^{4} - ax^2 + 1) y^2 + b^2 x^{4} $ & $C_2^4$ & $4_{40}$, $8_{30}$, $16_{6}$ & $1_{32}$, $4_{40}$, $8_{30}$, $16_{6}$ & $32$ \\
    $y^3 - x^2(x^5-1)$ & $C_3 \times D_5$, $(0; 2, 6, 15)$ & $1_1$, $15_5$, $30_{14}$ & $3_1$, $15_{15}$, $30_{10}$  & $1$ \\
$4 x^{8} + 36 x^{4} y^{4} + 81 y^{8} + 8 x^{6} + 30 x^{2} y^{4} + 5 x^{4} + 14 y^{4} + 2 x^{2} + 1$ & $( ( {( C_4 \times C_2)} \rtimes {C_4 ) \rtimes C_3 )} \rtimes C_2$ $\cong (192, 181)$, $(0; 2, 3, 8)$ & $16_{1}$, $24_{2}$, $48_{1}$, $96_{4}$ & $1_{1}$, $3_{1}$, $4_{1}$, $6_{2}$, $12_{1}$, $16_{1}$, $24_{4}$, $48_{4}$, $96_{2}$ & $1$ \\ \hline
$y^{2} -(x^{11} - 1)$ & $C_{11}$, $(0; 11^3)$ & $1_{1}$, $11_{45}$ & $11_{48}$ & $1$ \\
$y^{2} - x(x^{10}- 1)$ & $D_{10}$ & $1_{1}$, $5_{3}$, $10_{12}$, $20_{18}$ & $1_{1}$, $2_{1}$, $5_{3}$, $10_{27}$, $20_{12}$ & $2$ \\
$y^2 - (x^{12} - 1)$ & $D_{12}$ & $1_{1}$, $3_{1}$, $6_{2}$, $12_{12}$, $24_{14}$ & $1_{1}$, $2_{1}$, $3_{1}$, $6_{5}$, $12_{25}$, $24_{8}$ & $2$ \\
$y^{2} - (x^{12} - 33 x^{8} - 33 x^{4} + 1)$ & $S_4$ & $1_{1}$, $3_{1}$, $6_{2}$, $12_{8}$, $24_{16}$ & $3_{2}$, $4_{1}$, $6_{3}$, $8_{1}$, $12_{13}$, $24_{14}$ & $1$ \\
$ y^2 - x(x^{10} + 11 x^{5} - 1)$ & $A_5$, $(0; 3^2,5)$ & $1_{1}$, $15_{1}$, $30_{6}$, $60_{5}$ & $6_{3}$, $10_{3}$, $15_{4}$, $30_{12}$, $60_{1}$ & $1$ \\
\label{tab: orbit decomposition, genus 5}
\end{longtable}

\begin{longtable}{p{0.37\linewidth}|p{0.16\linewidth}|p{0.16\linewidth}|p{0.16\linewidth}|p{0.04\linewidth}}
\caption{Orbit decomposition, some non-hyperelliptic genus-6 curves, and separately all hyperelliptic genus-6 curves with many automorphisms} \\
    $f$ & $\Autb$, $\bm{c}$ & Odd & Even & I\\ \hline \hline 
$L_5(x,y) + L_3(x,y) + L_1(x,y)$ & $C_2$ & $1_{96}$, $2_{960}$ & $1_{160}$, $2_{960}$ & $256$ \\ 
$x^{5} + a x^{2} y^{3} + b x^{3} y + y^{4} + c x y^{2} + d x^{2} + e y$ & $C_3$ & $1_{6}$, $3_{670}$ & $1_{10}$, $3_{690}$ & $16$ \\
$L_5(x,y) + L_1(x,y)$ & $C_4$ & $1_{16}$, $2_{40}$, $4_{480}$ & $2_{80}$, $4_{480}$ & $16$ \\
$x^{5} + a x y^{4} + b x^{2} y^{2} + c x^{3} + d y^{2} + ex$ & $C_4$ & $1_{8}$, $2_{44}$, $4_{480}$ & $1_{8}$, $2_{76}$, $4_{480}$ & $16$ \\
$1 + L_5(x, y)$ & $C_5$ & $1_1$, $5_{403}$ & $5_{416}$ & $1$ \\
$x^{5} +a x^{2} y^{3} +b x^{3} y + y^{4} +c x y^{2} + d x^{2} + y$ & $S_3$ & $1_{6}$, $3_{90}$, $6_{290}$ & $1_{10}$, $3_{150}$, $6_{270}$ & $16$ \\ 
$x^{5} + y^{4} + a x^{3} + x$ & $C_8$ & $1_{4}$, $2_{6}$, $4_{20}$, $8_{240}$ & $4_{40}$, $8_{240}$ & $4$ \\
$x^{5} + y^{5} + a x y^{3} + b x^{2} y + 1$ & $D_5$ & $1_{6}$, $5_{90}$, $10_{156}$ & $1_{10}$, $5_{150}$, $10_{132}$ & $16$ \\
$x^{5} + y^{5} + a x^{3} + x$ & $C_{10}$ & $1_{1}$, $5_{19}$, $10_{192}$ & $5_{32}$, $10_{192}$ & $1$ \\ 
$x^{5} + y^{4} + x$ & $C_{16}$ & $1_{2}$, $2_{1}$, $4_{3}$, $8_{10}$, $16_{120}$ & $8_{20}$, $16_{120}$ & $2$ \\
$x^{5} + y^{5} + x$ & $C_{20}$ & $1_{1}$, $5_{3}$, $10_{8}$, $20_{96}$ & $10_{16}$, $20_{96}$ & $1$ \\
$x^{5} + y^{4} + y$ & ${C_5 \times S_3}$, $(0; 2, 10, 15)$ & $1_{1}$, $5_{1}$, $15_{18}$, $30_{58}$ & $5_{2}$, $15_{30}$, $30_{54}$ & $1$ \\
$x^{4} y + y^{4} + x$ & ${C_{13} \rtimes C_3}$ ${\cong (39,1)}$, $(0; 3^2, 13)$ & $1_{1}$, $13_{5}$, $39_{50}$ & $13_{10}$, $39_{50}$ & $1$ \\
$y^3 - (x^4 - 1)^2(x^4+1)$ & $C_3 \rtimes D_8 \cong (48, 15)$, $(0; 2, 6, 8)$ & $24_4$, $48_{40}$ & $1_1$, $3_1$, $6_2$, $12_8$, $24_{34}$, $48_{24}$& $1$\\
$y^3 - (x^4 - 2i\sqrt{3}x^2 + 1)(x^4 + 2i\sqrt{3}x^2 + 1)^2$ & $(V_4 \rtimes C_9) \rtimes {C_2 \cong} (72, 15)$, $(0; 2, 4, 9)$ & $18_{4}$, $36_{6}$, $72_{24}$ & $1_{1}$, $9_{7}$, $18_{4}$, $36_{30}$, $72_{12}$ & $1$ \\
$(x^6 + y^6 + 1) + (x^2 + y^2 + 1)(x^4 + y^4 + 1) - 12x^2 y^2$ & $S_5$, $(0; 2, 4, 6)$ & $6_{2}$, $12_{2}$, $20_{3}$, $30_{2}$, $60_{17}$, $120_{7}$ & $1_{2}$, $2_{1}$, $10_{2}$, $12_{3}$, $15_{6}$, $20_{2}$, $30_{9}$, $60_{21}$, $120_{3}$ & $2$ \\
$x^{5} + y^{5} + 1$ & ${C_5^2 \rtimes S_3}$ ${\cong (150, 5)}$, $(0; 2, 3, 10)$ & $1_{1}$, $15_{1}$, $25_{5}$, $75_{13}$, $150_{6}$ & $15_{2}$, 
$25_{10}$, $75_{20}$, $150_{2}$ & $1$ \\ \hline
$y^2 - (x^{13} - 1)$ & $C_{13}$, $(0; 13^3)$ & $1_{1}$, $13_{155}$ & $13_{160}$ & $1$ \\
$y^2 - x(x^{12} - 1)$ & $D_{12}$ & $2_{1}$, $4_{1}$, $6_{1}$, $12_{17}$, $24_{75}$ & $2_{1}$, $4_{2}$, $6_{3}$, $12_{43}$, $24_{64}$ & $0$ \\
$y^{2} - x(x^4-1)(x^8 + 14x^4 + 1)$ & $S_4$ & $6_{4}$, $8_{3}$, $12_{6}$, $24_{79}$ & $4_{4}$, $6_{4}$, $8_{3}$, $12_{30}$, $24_{69}$ & $0$ \\
$y^2 - (x^{14} - 1)$ & $D_{14}$ & $14_{16}$, $28_{64}$ & $1_{1}$, $7_{7}$, $14_{41}$, $28_{52}$ & $1$ 
\label{tab: orbit decomposition, genus 6}
\end{longtable}


\begin{longtable}{p{0.37\linewidth}|p{0.16\linewidth}|p{0.16\linewidth}|p{0.16\linewidth}|p{0.04\linewidth}}
\caption{Orbit decomposition, some non-hyperelliptic genus-7 curves, and separately some hyperelliptic genus-7 curves including all with many automorphisms}\\
    $f$ & $\Autb$, $\bm{c}$ & Odd & Even & I \\ \hline \hline 
$(x^{3} + y^{3})^2 - x^{2} y^{2} - 1$ & $D_6$ & $1_{4}$, $2_{12}$, $3_{12}$, $6_{232}$, $12_{556}$ & $1_{12}$, $2_{12}$, $3_{36}$, $6_{336}$, $12_{508}$ & $16$ \\
$x^{6} + y^{6} - x^{3} - y^{3}$ & $C_3 \times S_3$ & $1_{1}$, $3_{1}$, $6_{7}$, $9_{20}$, $18_{439}$ & $3_{4}$, $6_{6}$, $9_{60}$, $18_{426}$ & $1$ \\
$x^{6} + y^{4} - 1$ & ${C_{12} \times C_2}$, $(0; 4, 6, 12)$ & $1_{1}$, $3_{1}$, $6_{6}$, $12_{42}$, $24_{316}$ & $1_{1}$, $2_{1}$, $3_{1}$, $6_{11}$, $12_{66}$, $24_{308}$ & $2$ \\
$(x^{4} + y^{4})^2 - x^{3} y^{3} - x^{2} y^{2}$ & ${C_8 \rtimes V_4}$ ${\cong (32, 43)}$, $(0; 2^3, 8)$ & $8_{16}$, $16_{84}$, $32_{208}$ & $2_{4}$, $4_{22}$, $8_{42}$, $16_{129}$, $32_{180}$ & $0$ \\
$x^{7} + y^{7} - x^{2} y^{2}$ & ${C_3 \times D_7}$ & $1_{1}$, $21_{21}$, $42_{183}$ & $3_{1}$, $21_{63}$, $42_{165}$ & $1$ \\
$y^{21} - x (x + 1)^{13} (x - 1)^7$ & ${C_3 \times D_7}$, $(0; 2, 6, 21)$ & $1_{1}$, $21_{21}$, $42_{183}$ & $3_{1}$, $21_{63}$, $42_{165}$ & $1$ \\
$x^{9} + y^{9} - x^{6}$ & ${C_3 \times D_9}$, $(0; 2, 6, 9)$ & $1_{1}$, $18_{3}$, $27_{21}$, $54_{139}$ & $3_{1}$, $18_{4}$, $27_{63}$, $54_{120}$ & $1$ \\
$x^{9} + y^{9} - x^{3} y^{3}$ & ${C_3 \times D_9}$, $(0; 2, 6, 9)$ & $1_{1}$, $18_{3}$, $27_{21}$, $54_{139}$ & $3_{1}$, $18_{4}$, $27_{63}$, $54_{120}$ & $1$ \\
$y^8 - (x^2 - 1) (x^2 + 1)^3$ & ${(C_{16} \rtimes C_2)} \rtimes C_2 {\cong (64, 41)}$, $(0; 2, 4, 16)$ & $16_{8}$, $32_{42}$, $64_{104}$ & $4_{2}$, $8_{9}$, $16_{15}$, $32_{64}$, $64_{92}$ & $0$ \\
$y^{16} - x (x - 1)^9 (x + 1)^6$ & ${(C_8 \rtimes C_4)} \rtimes C_2 {\cong (64, 41)}$, $(0; 2, 4, 16)$ & $16_{8}$, $32_{42}$, $64_{104}$ & $4_{2}$, $8_{9}$, $16_{15}$, $32_{64}$, $64_{92}$ & $0$ \\
$y^{9} - 6 y^{6} + 3(9 x^{4}-5) y^{3} - 8$ & $((C_4 \times S_3) \rtimes {C_2) \rtimes C_3}$ ${\cong (144, 127)}$, $(0; 2, 3, 12)$ & $4_{1}$, $12_{1}$, $24_{2}$, $36_{2}$, $72_{41}$, $144_{35}$ & $6_{2}$, $12_{2}$, $18_{6}$, $24_{2}$, $36_{28}$, $72_{28}$, $144_{35}$ & $0$ \\
$28 x^{4} y^{4} + 2 x^{7} + 2 y^{7} + 35 x^{3} y^{3} + 21 x^{2} y^{2} + 7 x y + 1$ & $\PSL_2(\mathbb{F}_8)$, $(0; 2, 3, 7)$ & $28_{1}$, $36_{1}$, $252_{14}$, $504_{9}$ & $28_{3}$, $36_{3}$, $126_{16}$, $252_{18}$, $504_{3}$ & $0$ \\ \hline
$y^2 - x(x^6-1)(x^8-2)$ & $C_2$ & $1_{64}$, $2_{4032}$ & $1_{64}$, $2_{4096}$ & $128$ \\
$y^{2} - x(x^7-1)(x^7-2)$ & $D_7$ & $1_{1}$, $7_{63}$, $14_{549}$ & $1_{3}$, $7_{189}$, $14_{495}$ & $4$ \\
$y^2 - (x^{15} - 1)$ & $C_{15}$, $(0; 15^3)$ & $3_{1}$, $5_{2}$, $15_{541}$ & $1_{1}$, $5_{1}$, $15_{550}$ & $1$ \\
$y^2 - (x^8-1)(x^8-2)$ & $D_8$ & $8_{72}$, $16_{472}$ & $1_{4}$, $2_{4}$, $4_{25}$, $8_{170}$, $16_{424}$ & $4$ \\
$y^2 - x(x^{14} - 1)$ & $D_{14}$ & $1_{1}$, $7_{7}$, $14_{57}$, $28_{260}$ & $1_{1}$, $2_{1}$, $7_{7}$, $14_{120}$, $28_{233}$ & $2$ \\
$y^2 -( x^{16} - 1)$ & $D_{16}$ & $8_{4}$, $16_{62}$, $32_{222}$ & $1_{2}$, $2_{1}$, $4_{3}$, $8_{14}$, $16_{112}$, $32_{198}$ & $2$ \\
$y^2 -(x^{16} + 1)$ & $D_{16}$ & $8_{4}$, $16_{62}$, $32_{222}$ & $1_{2}$, $2_{1}$, $4_{3}$, $8_{14}$, $16_{112}$, $32_{198}$ & $2$ 
\label{tab: orbit decomposition, genus 7}
\end{longtable}

\begin{longtable}
{p{0.37\linewidth}|p{0.16\linewidth}|p{0.16\linewidth}|p{0.16\linewidth}|p{0.04\linewidth}}
\caption{Orbit decomposition, all hyperelliptic curves of genus 8 with many automorphisms} \\ 
    $f$ & $\Autb$, $\bm{c}$ & Odd & Even & I\\ \hline \hline 
    $y^2 - (x^{17} - 1)$ & $C_{17}$, $(0; 17^3)$ & $17_{1920}$ & $1_{1}$, $17_{1935}$ & $1$ \\
$y^2 - x(x^4 - 1)(x^{12} - 33 x^8 - 33 x^4 + 1)$ & $S_4$ & $4_{2}$, $8_{2}$, $12_{94}$, $24_{1312}$ & $4_{2}$, $6_{8}$, $8_{4}$, $12_{90}$, $24_{1322}$ & $0$ \\
$y^2 - x (x^{16} - 1)$ & $D_{16}$ & $16_{72}$, $32_{984}$ & $2_{2}$, $4_{1}$, $8_{7}$, $16_{188}$, $32_{932}$ & $0$ \\
$y^2 -(x^{18} - 1)$ & $D_{18}$ & $6_{1}$, $18_{63}$, $36_{875}$ & $1_{1}$, $3_{1}$, $6_{1}$, $9_{14}$, $18_{182}$, $36_{819}$ & $1$
\label{tab: orbit decomposition, genus 8}
\end{longtable}


\begin{longtable}
{p{0.37\linewidth}|p{0.16\linewidth}|p{0.16\linewidth}|p{0.16\linewidth}|p{0.04\linewidth}}
\caption{Orbit decomposition, two non-hyperelliptic curves of genus 9, and all hyperelliptic curves of genus 9 with many automorphisms} \\
    $f$ & $\Autb$, $\bm{c}$ & Odd & Even & I\\ \hline \hline 
    $x^5 y^2 + y^5 + x^2$ & ${C_{19} \rtimes C_3} $ ${\cong (57,1)}$, $(0; 3^2, 19)$ & $1_1$, $19_{27}$, $57_{2286}$ & $19_{36}$, $57_{2292}$ & 1 \\ 
    $y - x^3 -x^4y^3 + xy^4 + 3x^2y^2$ & $S_5$, $(0; 2, 5, 6)$ & $6_1$, $10_4$, $20_{12}$, $30_{35}$, $60_{300}$, $120_{929}$ & $1_2$, $5_6$, $6_1$, $10_{10}$, $15_{24}$, $20_6$, $30_{85}$, $60_{402}$, $120_{867}$ & 2 \\ \hline
    $y^2 - (x^{19} - 1)$ & $C_{19}$, $(0; 19^3)$ & $1_1$, $19_{6885}$ & $19_{6912}$ & 1 \\
    $y^2 - (x^{12} - 33 x^8 - 33x^4 + 1)(x^8 + 14 x^4 + 1)$ & $S_4$ & $1_2$, $2_1$, $3_2$, $4_2$, $6_{13}$, $8_{11}$, $12_{172}$, $24_{5357}$ & $3_4$, $4_4$, $6_{18}$, $8_{16}$, $12_{290}$, $24_{5316}$ & 2 \\
    $y^2 - x(x^{18} - 1)$ & $D_{18}$ & $1_1$, $3_1$, $6_2$, $9_{14}$, $12_1$, $18_{245}$, $36_{3507}$ & $1_1$, $2_1$, $3_1$, $6_5$, $9_{14}$, $18_{497}$, $36_{3395}$ & 2 \\
    $y^2 - (x^{20} - 1)$ & $D_{20}$ & $1_1$, $5_3$, $10_{12}$, $20_{246}$, $40_{3144}$ & $1_1$, $2_1$, $5_3$, $10_{27}$, $20_{504}$, $40_{3024}$ & 2 \\
    $y^2 - (x^{20} - 228 x^{15} + 494 x^{10} + 228 x^5 + 1)$ & $A_5$, $(0; 3, 5^2)$ & $1_{1}$, $5_{3}$, $10_{3}$, $15_{4}$, $20_{9}$, $30_{117}$, $60_{2117}$ & $6_{3}$, $10_{12}$, $15_{16}$, $20_{12}$, $30_{345}$, $60_{2006}$ & $1$ 
\label{tab: orbit decomposition, genus 9}
\end{longtable}


\bibliography{jabref_library}
\addcontentsline{toc}{chapter}{Bibliography}
\bibliographystyle{amsalpha}

\end{document}